\newcommand{\cmark}{\ding{51}}
\newcommand{\xmark}{\ding{55}}
\theoremstyle{plain}
\newtheorem{thm}{Theorem}[subsection]
\newtheorem{prop}[thm]{Proposition}
\newtheorem{lemma}[thm]{Lemma}
\newtheorem{cor}[thm]{Corollary}
\theoremstyle{definition}
\newtheorem{defn}[thm]{Definition}
\newtheorem{plaindef}{Definition}[section]
\newtheorem{plainrem}[plaindef]{Remark}
\newtheorem*{defn*}{Definition}
\newtheorem*{question*}{Question}
\newtheorem{example}[thm]{Example}
\newtheorem*{example*}{Example}
\newtheorem{rem}[thm]{Remark}
\newtheorem*{rem*}{Remark}
\newtheorem{notation}[thm]{Notation}
\newcommand{\field}[1]{\mathbb{#1}}
\newcommand{\N}{\field{N}}
\newcommand{\Z}{\field{Z}}
\newcommand{\ideal}[1]{\mathfrak{#1}}
\newcommand{\m}{\ideal{m}}
\newcommand{\n}{\ideal{n}}
\newcommand{\p}{\ideal{p}}
\newcommand{\ia}{\ideal{a}}
\newcommand{\func}[1]{\mathrm{#1} \,}
\newcommand{\coker}{\func{coker}}
\newcommand{\im}{\func{im}}
\newcommand{\arrow}[1]{\stackrel{#1}{\rightarrow}}
\newcommand{\ra}{\rightarrow}
\DeclareMathOperator{\ann}{ann}
\DeclareMathOperator{\Hom}{Hom}
\newcommand{\intr}{\mathrm{int}}
\newcommand{\be}{\begin{enumerate}}
\newcommand{\ee}{\end{enumerate}}
\newcommand{\li}
 {\leftfootline}
\newcommand{\lic}[2]{{#1}^{\li}_{#2}}
\newcommand{\Ri}{{-\rm{Rs}}}
\newcommand{\EHUi}{{-\rm{EHU}}}
\newcommand{\onto}{\twoheadrightarrow}
\newcommand{\into}{\hookrightarrow}
\newcommand{\cA}{\mathcal{A}}
\newcommand{\cC}{\mathcal{C}}
\newcommand{\cD}{\mathcal{D}}
\newcommand{\cM}{\mathcal{M}}
\newcommand{\cP}{\mathcal{P}}
\newcommand{\cR}{\mathcal{R}}
\newcommand{\cS}{\mathcal{S}}
\renewcommand{\phi}{\varphi}
\DeclareMathOperator{\chr}{char}
\DeclareMathOperator{\Sym}{Sym}
\newcommand{\opsub}{order-preserving on submodules}
\newcommand{\dual}{\smallsmile}
\newcommand{\cl}{{\mathrm{cl}}}
\newcommand{\rp}{{\mathrm{p}}}
\newcommand{\riq}{{\mathrm{q}}}
\let\int\relax
\DeclareMathOperator{\int}{i}
\DeclareMathOperator{\ch}{c}
\DeclareMathOperator{\h}{h}
\newcommand{\po}[2]{{#1}_{#2}}
\DeclareMathOperator{\tr}{tr}
\DeclareMathOperator{\core}{-core}
\DeclareMathOperator{\cre}{core}
\DeclareMathOperator{\hull}{-hull}
\newcommand{\Jcolsym}[1]{{#1} \rm{bf}}
\newcommand{\Jcol}[3]{{#2}^{\Jcolsym #1}_{#3}}
\newcommand{\Jintrelsym}[1]{{#1}{\rm be}}
\newcommand{\Jintrel}[3]{{#3}_{\Jintrelsym{#1}}^{#2}}
\newcommand{\fg}{finitely generated}
\newcommand{\charp}{characteristic $p>0$}
\newcommand{\CM}{Cohen-Macaulay}
\newcommand{\bemp}{basically empty}
\newcommand{\cmg}{\color{magenta}}
\newcommand{\cbl}{\color{blue}}
\newcommand{\perf}{{\mathrm{perf}}}
\newcounter{rfs}
\DeclareRobustCommand{\nextref}[1]{{\refstepcounter{rfs}\label{#1}}}
\author{Neil Epstein}
\address{Department of Mathematical Sciences \\ George Mason University \\ Fairfax, VA  22030}
\email{nepstei2@gmu.edu}
\author{Rebecca R.G.}
\address{Department of Mathematical Sciences \\ George Mason University \\ Fairfax, VA  22030}
\email{rrebhuhn@gmu.edu}
\author{Janet Vassilev}
\address{Department of Mathematics and Statistics \\ University of New Mexico \\ Albuquerque, NM 87131}
\email{jvassil@math.unm.edu}
\title[Closure, test ideals, and duality framework]{A common framework for test ideals, closure operations, and their duals}
\date{March 24, 2026}
\begin{document}

\begin{abstract}

     Closure operations such as tight and integral closure and test ideals have appeared frequently in the study of commutative algebra. This articles serves as a survey of the authors' prior results connecting closure operations, test ideals, and interior operations via the more general structure of pair operations. Specifically, we describe a duality between closure and interior operations generalizing the duality between tight closure and its test ideal,  provide methods for creating pair operations that are compatible with taking quotient modules or submodules, and describe a generalization of core and its dual. Throughout, we discuss how these ideas connect to common  constructions in commutative algebra.
\end{abstract}

\maketitle

\section{Introduction}

Closure operations have been important in commutative algebra for a long time (see \cite{nme-guide2}). For example, the integral closure and minimal reductions of Northcott and Rees \cite{NR} and the prime operations of Krull \cite{Kr-domains1} led to an extended study of integral closures of ideals and modules, including multiple books on the subject \cite{HuSw-book,Vasc-icbook}. 
Many classes of rings can be described through classes of integrally closed ideals; for example, Noetherian rings whose principal ideals are integrally closed are normal rings \cite[Proposition 1.5.2]{HuSw-book} and Pr\"ufer domains (Dedekind domains in the Noetherian setting) are the domains whose ideals are all integrally closed \cite{Jensen-Prufer}. The Brian\c{c}on-Skoda Theorem, a theorem whose roots lie in analysis and determines the power of an ideal $I$ whose integral closure lies in $I$, has inspired many results in commutative algebra including its tight closure variant \cite[Chapter 13]{HuSw-book}.  

In the past 35 years, operations like tight closure \cite{HHmain} have been a key tool in studying singularities of commutative rings \cite{ScTu-survey,Sm-param,Di-clCM,MaSchwedePerfectoidBCMSings}. The discovery that the uniform annihilator of tight closure can often be realized as the annihilator of a single tight closure module in the injective hull of the residue field \cite{HHmain,ScTu-survey}, along with work connecting this ``test ideal'' to the multiplier ideals used by algebraic geometers \cite{Sm-param,HaTa-gentest}, further cemented the usefulness of closure operations to commutative algebra and algebraic geometry.

The latter work led the first named author and Karl Schwede \cite{nmeSc-tint} to the first steps toward a closure-interior duality in the form of the \emph{tight interior} operation.  This in turn led the second named author and Felipe P\'erez \cite{PeRG} to describe a duality between module closures and trace ideals that is parallel to the duality between tight closure and its test ideal. We \cite{nmeRG-cidual, ERGV-chdual, ERGV-nonres, ERGV-extend} then built a general theory of how closure operations and their test ideals can be viewed as dual, applying the theory to other examples such as integral closure (building on \cite{EHU-Ralg, nmeUlr-lint}) and basically full closure (as in \cite{HRR-bf}), and generalizing it along the way to a much broader duality of pair operations. Since the ultimate foundation goes back to the work of Hochster and Huneke on tight closure, it seemed appropriate to place this article into a volume dedicated to the work of these two giants of commutative algebra.

We developed our theory of duality between pair operations over the course of four papers totaling over 150 pages \cite{nmeRG-cidual, ERGV-chdual, ERGV-nonres, ERGV-extend}. It became clear over that period that it would be useful to write a survey, as a kind of guide to the ideas and techniques and to give new readers a quicker entry point to our results.  This has the additional advantage of standardizing our notation, which grew over time and hence is inconsistent between papers. Additionally, some of our early results stated before we began working in the generality of pair operations can easily be extended to this more general construct.  We therefore use this survey paper as an opportunity to address the above issues.

 In order to motivate general pair operations, it makes sense to zoom out and consider what closure and interior operations really are. Consider the following scenario: Given an $R$-submodule inclusion $L \subseteq M$ defined in some abstract way, where $M$ is well-understood but $L$ is not, a basic problem is to figure out which elements of $M$ are in $L$. 

One may sometimes approximate from \emph{above}.  That is, one finds a \emph{necessary} condition for an element $x\in M$ to be in $L$, so that if $x$ fails the condition, it cannot be in $L$.  This often leads to the notion of a \emph{closure operation} on submodules of $M$.  One decides which submodules of $M$ are \emph{closed}, and then the \emph{closure} $L^\cl_M$ of $L$ in $M$ is the intersection of all such submodules that contain $L$.  A typical example is given by letting $M=R$, and the closure of an ideal its radical; here the prime ideals make a basis for the closed ideals in $R$ (i.e. every closed ideal is an intersection of them, and vice versa).  The condition  for containment in the radical of an ideal is that some power of the element is in the ideal $L$.  Other important examples of closure operations include tight closure \cite{HHmain} and various versions of integral closure \cite{Rees-redmod}, \cite{EHU-Ralg}, \cite{SUVIntClosure},  \cite{nmeUlr-lint}.

However, sometimes an approximating condition from above is not really a closure operation, in the sense that it may not be idempotent -- i.e. the ``closure'' isn't closed.  For instance, if $(R,\m)$ is a local ring, then taking the \emph{socle} of $L$ in $M$, given by $(L :_M \m)$, i.e., the elements of $M$ that $\m$ multiplies into $L$, is not an idempotent operation.  Perform the operation a second time and you get the elements that $\m^2$ multiplies into $L$.  A related important example is the $\ia$-tight closure of Hara and Yoshida, which is dual to the test ideal for pairs \cite{HaYo-atc}. 
Alternately, such an operation might not be order-preserving -- i.e. one may have $K \subseteq L$ but the ``closure'' of $K$ isn't in the ``closure'' of $L$.  The Ratliff-Rush operation \cite{RatRu-rr} exhibits this 
pathology   \cite[Example 1.11]{HeLaShRR}, \cite[1.1]{HJLS-coeff}.

A less common but also useful thing is to approximate from \emph{below}.  That is, one finds a \emph{sufficient} condition for an element $x\in M$ to be in $L$, so that if $x$ satisfies the condition, it must be in $L$.  This can lead to the notion of an \emph{interior operation} on submodules of $L$.  Here, one decides which submodules of $M$ are \emph{open}, and then the \emph{interior} $L_\intr^M$ of $L$ with respect to $M$ is the sum of all such submodules of $M$ that are also submodules of $L$.  A typical example is given by letting $R$ and $M$ be graded, but $L$ not necessarily graded, and the interior of $L$ in $M$ the sum of all homogeneous submodules of $L$.  Here the cyclic homogeneous submodules of $M$ make a basis   for the open submodules (i.e., every open submodule is a sum of them).  The condition that determines containment in this interior is that $x$ is a sum of homogeneous elements of $L$.  Other examples include \emph{tight interior} \cite{nmeSc-tint} and \emph{basically empty interior} (See Definition~\ref{def:bee}.)
However, not all approximations from 
below lead to interior operations.  For example, the $\cl$-core of a submodule $L$ of $M$ is the intersection of all $\cl$-reductions of $L$ in $M$ \cite{FoVa-core}, \cite{ERGV-chdual}; although $\cl$-core$_M(L) \subseteq L$, this operation is typically not idempotent (see for example \cite[Theorem 4.4]{FPUgradanncore}),  nor is it order-preserving on submodules \cite{Lee-core}. 

In our previous work, we established a link between closure and interior operations when $(R,\m,k)$ is a complete Noetherian local ring through Matlis duality.  The simplest version of this link occurs when $M=R$ and $\cl$ is a closure operation on submodules of the injective hull $E = E_R(k)$ of the residue field.  Then one obtains an interior operation $\intr$ on the ideals of $R$ by setting $I_\intr := \ann_R((\ann_EI)^\cl_E)$.  Interestingly, if $\intr'$ is an interior operation on submodules of $E$, then one obtains a closure operation $\cl'$ on the ideals of $R$ in the same way, by setting $I^{\cl'} := \ann_R((\ann_EI)_{\intr'}^E)$. See Proposition \ref{pr:test} and Theorem \ref{thm:finintideal} for more details.

The above considerations have led us to consider pair operations in general, where given a submodule inclusion $L \subseteq M$, one obtains a submodule $p(L,M)$ of $M$ in some systematic way
.  This then encompasses closure operations, other necessary conditions for submodule containment, interior operations, other sufficient conditions for submodule containment, etc.  In the complete local case, we defined and developed a duality operator on  general pair operations, called the \emph{smile dual} $p \mapsto p^\dual$ (see Section \ref{sec:dual}).  The interior-closure duality outlined  in the preceding paragraph is a special case of this smile duality.  When the modules in question are Artinian or Noetherian, one obtains $p^{\dual \dual} = p$ (see Lemma \ref{lem:doubledual}).  In prime characteristic, the smile dual of tight closure is tight interior as in \cite{nmeSc-tint}, which in turn generalizes test ideals, in the sense that $R_{*^\dual}^R$ is the big test ideal of $R$.

An advantage to this framework is that it allows one to consider relationships between different kinds of operations in sophisticated ways. 
This duality creates a correspondence between properties of a closure, interior, or pair operation and properties of its dual (see Table~\ref{tab:dual}). 
For instance, $p$ is idempotent (i.e. $p\circ p = p$) if and only if $p^\dual$ is idempotent.  A pair operation $p$ is \emph{extensive} (i.e. $L \subseteq p(L,M)$ for all relevant $L\subseteq M$) if and only if $p^\dual$ is \emph{intensive} (i.e. $p^\dual(A,B) \subseteq A$ for all relevant $A \subseteq B$).

We begin with a discussion of pair operations and their common properties (Section \ref{sec:pairs}), followed by results on the smile dual operation that sends closure operations to interior operations and vice versa (Section \ref{sec:dual}). Section \ref{sec:version} goes over the residual and hereditary properties of a pair operation, and how to create versions of a pair operation with these properties. Section \ref{sec:subselect} describes how our earliest work on submodule selectors interacts with more recent work on pair operations, building up to a result describing the duality between closure operations and their test ideals (Theorem \ref{thm:finintideal}). In Section \ref{sec:lim}, we summarize results on meets and joins of pair operations, updating our previous work on limits of submodule selectors. In Section~\ref{sec:bigtable}, we provide an extensive table of pair operations from the literature and their properties, 
along with explanations and useful proofs and counterexamples to complement the existing literature.  Then in  Section~\ref{sec:trace}, we demonstrate how our results apply to the context of module closures and trace ideals. Both examples and applications of these appear in many places in the literature, and we will provide examples of both in Section~\ref{sec:trace}. 
This section doubles as an introduction to traces and module closures more generally. Finally, in Section \ref{sec:corehull}, we discuss cores and hulls for arbitrary Nakayama closures, and describe their duality and applications to basically full closure. 

We hope this survey article will be a clean introduction to pair operations and duality as they apply to tight closure, integral closure, test ideals, traces, module closures, and whatever other purpose you, the reader, find for them to serve.

\section{Pair operations}

\subsection{Basic definitions and properties}\label{sec:pairs}




 In this section we define pair operations and give a number of common properties that pair operations may have. The pair operation is the common generalization of closure and interior operations that allows the most flexibility in defining a broader class of operations on modules.

 \begin{notation}
     Throughout the paper, $R$ will be a commutative ring with unity. However, most of the results, definitions, and constructions not using  smile duality will work for more general rings.
 \end{notation}


\begin{defn}[{\cite[Definition 2.2]{ERGV-nonres}, \cite[Definition 2.1, Definition 3.1 and 3.12]{ERGV-extend}}]
\label{def:pairop}
Let $\cM$ be a category of $R$-modules. Let $\cP$ be a collection of pairs $(L,M)$, where $L$ is a submodule of $M$, and $L,M \in \cM$, 
such that whenever $\phi:M \to M'$ is an isomorphism in $\cM$ and $(L,M) \in \cP$, $(\phi(L),M') \in \cP$ as well.

A \emph{pair operation} is a function $p$ that sends each pair $(L,M) \in \cP$ to a submodule $p(L,M)$ of $M$, in such a way that whenever $\phi: M \ra M'$ is isomorphism in $\cM$
 and $(L,M) \in \cP$, then 
  $\phi(p(L,M)) = p(\phi(L),M')$.
\end{defn}


\begin{rem}\label{ourpairs}
    Throughout this paper, our collection of pairs $\cP$ will typically be all pairs of $R$-modules $(L,M)$ with $L \subseteq M$, such that both are in one of the following categories: the category of all $R$-modules, the category of finitely generated $R$-modules, the category of Artinian $R$-modules or the category of Matlis dualizable $R$-modules (when the ring is complete local).
\end{rem} 

The basic properties of pair operations that we need for the definitions of closure and interior operations \cite[Definition 2.2]{ERGV-nonres} are given in  Table~\ref{ta:basics}. For Table~\ref{ta:basics}, $(L,M)$ is an arbitrary pair in $\cP$ and $(N,M)$ is an arbitrary pair such that $L \subseteq N$.

\begin{table}[h]
\fbox{
\begin{tabular}{c|c}
Property name  &  Definition\\
\hline\hline
\emph{idempotent}  & $p(p(L,M),M)=p(L,M)$\\
\hline
\emph{extensive}  &  $L \subseteq p(L,M)$\\
\hline
\emph{intensive} &  $p(L,M) \subseteq L$\\
\hline
\emph{order-preserving}  & $p(L,M) \subseteq p(N,M)$\\
\emph{on submodules} & \\
\end{tabular}}
\caption{First properties}
\label{ta:basics}
\end{table}

\begin{defn} \cite[Definition 2.2]{ERGV-nonres}
  A pair operation $p$ on a class $\cP$ of pairs of $R$-modules $(L,M)$ as above is
  \begin{itemize}
      \item a \emph{closure operation} if it is extensive, order-preserving on submodules, and idempotent;
    \item an \emph{interior operation} if it is intensive, order-preserving on submodules, and idempotent.
  \end{itemize}
\end{defn}


\begin{notation}
Throughout the paper, when $p$ is extensive (e.g., a closure operation) we will write $N_M^p$ for $p(N,M)$, and when $p$ is intensive (e.g., an interior operation), we will write $N_p^M$ for $p(N,M)$.
\end{notation}

The properties in Table~\ref{ta:func} express how pair operations behave with respect to homomorphisms between modules $M$ and $M'$ (See \cite[Definition 3.1]{ERGV-extend} for all except `fully functorial').  For 
Table~\ref{ta:func}, $L \subseteq N \subseteq M$, $K \subseteq M$, and $L' \subseteq M'$ are arbitrary $R$-module inclusions in $\cM$, $\pi: M \onto M'$ an arbitrary epimorphism  in $\cM$, and $g:M \rightarrow M'$ an arbitrary $R$-linear map  in $\cM$.

\begin{table}[h]
\fbox{
\begin{tabular}{c|c}
Property name  & Definition\\
\hline\hline
\emph{order preserving} &   $p(L,N) \subseteq p(L,M)$\\
\emph{on ambient modules} & \\
\hline
\emph{surjection-functorial}  & $\pi(p(L,M)) \subseteq p(\pi(L),M')$\\
\hline
\emph{functorial}  & $g(p(L,M)) \subseteq p(g(L), M')$\\
\hline
\emph{restrictable} & $p(L \cap K,K) \subseteq p(L,M)$\\
\hline
\emph{surjection-cofunctorial} & $p(\pi^{-1}(L'),M) \subseteq \pi^{-1} (p(L', M'))$ \\
\hline
\emph{cofunctorial}  & $p(g^{-1}(L'),M) \subseteq g^{-1} (p(L', M'))$ \\
\hline
\emph{fully functorial} & $g(p(L,M)) \subseteq p(L',M')$
\\
\end{tabular}
}
\caption{Reaction to linear maps}
\label{ta:func}
\end{table}

\begin{rem}
    An equivalent defining property for surjection-functoriality is that for 
    $U \subseteq M$,  \[(p(L,M)+U)/U \subseteq p((L+U)/U, M/U).\]  
Also note that $p$ is functorial if and only if 
$p$ is both  order-preserving on ambient modules and surjection-functorial, by the usual epi-monic factorization. 
See \cite[Definition 2.11]{ERGV-nonres}.
\end{rem} 

\begin{rem}\label{rem:funcforideals}

In our framing, an operation `on ideals' can be considered a pair operation by making the ambient module always be $R$, so that the pairs are those of the form $(I,R)$ (with $I\in \cM$).  Then $p$ is functorial if and only if for any $x\in R$ (such that the homothety map $x: R \ra R$ is in $\cM$) and ideals $I$ ($\in \cM$), $xp(I,R) \subseteq p(xI,R)$.  Similarly, $p$ is cofunctorial if and only if for all such $x$ and $I$, $p((I:_Rx),R) \subseteq (p(I,R):_Rx)$.
\end{rem}

When $p$ is order preserving on submodules then 
 Table~\ref{tab:opsub} illustrates which properties from the above table are equivalent.  The proofs are found in \cite[Proposition 3.5]{ERGV-extend}

\begin{table}[h]
\fbox{\begin{tabular}{c c c c c}
\multicolumn{5}{c}{When $p$ is order preserving on submodules, $p$ is $\ldots$} \\
\hline\hline

order preserving on ambient modules & $\Leftrightarrow$ &restrictable\\
\hline
surjection-functorial & $\Leftrightarrow$ & surjection-cofunctorial \\
\hline
functorial & $\Leftrightarrow$ & cofunctorial & $\Leftrightarrow$ & fully functorial
\end{tabular}}
\caption{Some restricted equivalences}
\label{tab:opsub}
\end{table}

\begin{rem}
The category-minded reader may wonder what the ``functor'' is in the definitions of (co/surjection/fully) functorial.  Let $\cM$, $\cP$ be as in Definition~\ref{def:pairop}.  Then one can view $\cP$ as a category by letting the morphisms $(L,M) \ra (L',M')$ be the $\cM$-morphisms $\phi: M \ra M'$ such that $\phi(L) \subseteq L'$.  Then a pair operation $p$ on $\cP$ becomes a \emph{functor} from $\cP$ to $\cM$ precisely when, for any such $\phi$, we have $\phi(p(L,M)) \subseteq p(L',M')$ -- i.e., $p$ is fully functorial.

Any fully functorial pair operation $p$ is functorial, cofunctorial, and order-preserving on submodules.  
Conversely, if $p$ is order-preserving on submodules (as is the case for many well-studied operations, including all closure and interior operations), and it is \emph{either} functorial or cofunctorial, then it is fully functorial. See Proposition~\ref{pr:func}. As the  property `functorial' was first identified (by Hochster) in the context of closure operations, the original definition and naming scheme makes sense.
\end{rem}

\begin{prop}\label{pr:func}
Let $\cM$ be a category of modules and $\cP$ be a class of pairs in $\cM$, such that whenever $(L,M) \in \cP$ and $M' \arrow{\phi} M \arrow{\psi} M''$ are morphisms in $\cM$, $(\phi^{-1}(L),M'), (\psi(L),M'') \in \cP$. (This holds in Remark~\ref{ourpairs}.)  Let $p$ be a pair operation on $\cP$.  Consider the following conditions: \begin{enumerate}
    \item\label{it:functor} 
    $p$ is fully functorial.
    \item\label{it:functorial} $p$ is functorial.
    \item\label{it:cofunctorial} $p$ is cofunctorial.
    \item\label{it:opsub} $p$ is \opsub.
\end{enumerate}
%
Then (\ref{it:functor}) $\implies$ (\ref{it:functorial}), (\ref{it:cofunctorial}), and (\ref{it:opsub}).  Conversely, (\ref{it:functorial}) $\&$ (\ref{it:opsub}) $\implies$ (\ref{it:functor}), and also (\ref{it:cofunctorial}) $\&$ (\ref{it:opsub}) $\implies$ (\ref{it:functor}).
\end{prop}

\begin{proof}
Suppose (\ref{it:functor}).  Let $(L,M), (K,M') \in \cP$ and $\phi: M \ra M'$ in $\cM$. Then $\phi: (L,M) \ra (\phi(L),M')$ is a morphism in $\cP$, so since $p$ is 
fully functorial, we have $\phi(p(L,M)) \subseteq p(\phi(L),M')$; that is, (\ref{it:functorial}) holds.  Moreover, $\phi:(\phi^{-1}(K),M) \ra (K,M')$ is a morphism in $\cP$, so $\phi(p(\phi^{-1}(K),M)) \subseteq p(K,M')$. Thus, $p(\phi^{-1}(K),M) \subseteq \phi^{-1}(p(K,M'))$; that is, (\ref{it:cofunctorial}) holds.  Now suppose $(N,M) \in \cP$ and $L \subseteq N$.  Then the identity map $1_M: M \ra M$ extends to a morphism $g: (L,M) \ra (N,M)$ in $\cP$, since $1_M(L) = L \subseteq N$.  Thus, $p(L,M) = 1_M(p(L,M)) \subseteq p(N,M)$ by (\ref{it:functor}), whence (\ref{it:opsub}) holds.

Conversely, let $\phi:(L,M) \ra (L',M')$ be a morphism in $\cP$ (i.e. $\phi(L) \subseteq L'$) and suppose (\ref{it:opsub}) holds.  If (\ref{it:functorial}) holds,  then $\phi(p(L,M)) \subseteq p(\phi(L),M')$.  Since (\ref{it:opsub}) holds, $p(\phi(L),M') \subseteq p(L',M')$.  On the other hand if (\ref{it:cofunctorial}) holds, then $p(\phi^{-1}(L'), M) \subseteq \phi^{-1}(p(L',M))$.  Since (\ref{it:opsub}) holds and $L \subseteq \phi^{-1}(L')$, we have $p(L,M) \subseteq p(\phi^{-1}(L'),M)$.  In either case, it follows from transitivity of set containment that $\phi(p(L,M)) \subseteq p(L',M')$; that is, (\ref{it:functor}) holds.
%
\end{proof}

\begin{defn}
\label{def:persistandLS}
Let $\cR$ be a category of rings, and for each $R \in \cR$, let $\cM_R$ be a category of $R$-modules and $\cP_R$ be a category of pairs as above (see Definition~\ref{def:pairop}).  For an $R$-submodule inclusion $K \subseteq M$ and a ring map $\phi:R \ra S$, define $KS$ to be the image of the map $K \otimes_R S \ra M \otimes_R S$ obtained by applying the functor $- \otimes_R S$ to the inclusion map $K \into M$. Assume that for any ring map $\phi:R \ra S$ in $\cR$ and any $(L,M) \in \cP_R$, we have $(LS, M \otimes_R S) \in\cP_S$. 

We say that $p$ is \emph{persistent} on $\cR, \cP$ if for any $\phi: R \ra S$ in $\cR$ and any $(L,M) \in \cP_R$, we have $p_R(L,M)S \subseteq p_S(LS, M \otimes_RS)$.
\end{defn}

Despite the similarity in definitions, persistence and functoriality are very different properties.  For example, it is elementary that functoriality of tight closure operation holds in full generality, but \emph{persistence} of tight closure fails in general \cite{Heit-tcnon}. Persistence of tight closure does hold in the cases most people care about, but this is a hard theorem (see e.g. \cite[Theorem 6.24]{HHbase}).

Inspired by the typical ways that closure operations on the ideals of rings have been extended to modules (see \cite[Section 7]{nme-guide2}, and \cite[Section 6]{nmeRG-cidual}), we defined the following properties of pair operations in \cite[Definition 3.12]{ERGV-extend}.  In Table~\ref{ta:hered}, $(L,N)$ and $(L,M)$ are arbitrary pairs such that $L \subseteq N \subseteq M$, and $\pi: M \onto M/L$ is the canonical epimorphism. 
\medskip

\begin{table}[h]
\begin{center}\fbox{
\begin{tabular}{c|c}
Property name  & Definition\\
\hline\hline
\emph{hereditary} & $p(L,N) = p(L,M) \cap N$\\
\hline
\emph{absolute} & $p(L,N) = p(L,M)$ \\
\hline
\emph{cohereditary} & $p(N/L, M/L) = \displaystyle\frac{p(N,M) + L}L$ \\
& \\
\hline
\emph{residual}  &$p(N,M) = \pi^{-1}(p(N/L, M/L))$ \\
\end{tabular}
}\end{center}
\caption{Different styles of operation}
\label{ta:hered}
\end{table}

\begin{rem}
It would be equivalent to define \emph{absolute} and \emph{hereditary} in the following ways: Given $L \subseteq N$ and an injective map $j: N \hookrightarrow M$, 
\begin{itemize}
    \item absolute: $j(p(L,N)) = p(j(L),M)$.
    \item hereditary: $p(L,N) = j^{-1}(p(j(L),M)$.
    \end{itemize}
This is because by definition, a pair operation is invariant under isomorphism, so we can treat $j$ as an inclusion.
\end{rem}

\begin{rem}
    When $p$ is intensive, hereditary and absolute are the same \cite[Lemma 3.15]{ERGV-extend}.  When $p$ is extensive, cohereditary and residual are the same \cite[Lemma 3.16]{ERGV-extend}.
\end{rem}

We will discuss these properties and their uses further in Section \ref{sec:version}.

\medskip

For local rings $(R, \m )$, we can define \emph{Nakayama} closures and interiors. Nakayama closures can be used to define and prove theorems about a $\cl$-\emph{core} analogous to the integral closure core, as discussed in Section \ref{sec:corehull}. 

\begin{defn}[{\cite[Definitions 2.11 and 3.11] {ERGV-nonres}, \cite[Definition 2.6]{ERGV-extend}}]
\label{def:nakayama}
Let $(R,\m)$ be a Noetherian local ring.

Let $\cl$ be a closure operation on the class of pairs of \fg\ $R$-modules.  We say that $\cl$ is a \emph{Nakayama closure} if for \fg\ $R$-modules $L \subseteq N \subseteq M$, if $L\subseteq N \subseteq (L+\m N)^{\cl}_M$ then $L^{\cl}_M=N^{\cl}_M$.

Let $\intr$  be an interior operation on the class of pairs of Artinian $R$-modules. We say that $\int$ is a \emph{Nakayama interior} if for any Artinian $R$-modules $A \subseteq C \subseteq B$, if 
$(A:_C \m)^B_{\intr}\subseteq A$, then $A^B_{\intr} = C^B_{\intr}$ (or equivalently, $C^B_{\intr} \subseteq A$).
\end{defn}

Integral closure \cite[Page 372]{nme*spread}, tight closure  \cite[Proposition 2.1]{nme*spread} and Frobenius closure \cite[Lemma2.2 and Proposition 4.2]{nme-sp} are all Nakayama closures on local rings.  The tight interior as defined in \cite{nmeSc-tint} is a Nakayama interior \cite[Proposition 5.5]{ERGV-chdual}.  In fact, any interior which is dual to a Nakayama closure is a Nakayama interior \cite[Proposition 3.12]{ERGV-nonres}.

\subsection{The dual of a pair operation}\label{sec:dual}

In this section, we detail our central concept, the duality on pair operations that sends closure operations to interior operations and vice versa. 

\begin{rem}
\label{rem:matlisdualizable}
    Throughout this section (and whenever we use duality in the papers), we assume that our ring is complete Noetherian commutative local and our category $\cP$ of pairs only contains Matlis-dualizable modules.
\end{rem}


\begin{defn}
\label{def:nonresidualdual}
Let $(R,\m,k)$ be a complete local ring and $E := E_R(k)$ the injective hull of the residue field. Let $p$ be a pair operation on a class of pairs  $\cP$ as in Remark \ref{rem:matlisdualizable}. 
Set $\cP^\vee := \{(A,B) \mid ((B/A)^\vee, B^\vee) \in \cP\}$, and $\eta_B: B \ra B^{\vee \vee}$ the Matlis duality isomorphism $x \mapsto (g \mapsto g(x))$, and define the dual of $p$ by 
\[
p^\dual(A,B):=\eta_B^{-1}\left(\left(\frac{B^\vee}{p\left(\left(\frac{B}{A}\right)^\vee,B^\vee\right) } \right)^\vee \right).
\]
 In the above, we use the convention that when $N \subseteq M$ are Matlis-dualizable modules, then $(M/N)^\vee$ is identified with the submodule $\{g \in M^\vee \mid g(N)=0\}$ of $M^\vee$.

We leave it as an exercise to show that $(\cP^\vee)^\vee=\cP$.
\end{defn}


\begin{lemma}\label{lem:doubledual} \cite[Proposition 3.6 (1)]{ERGV-nonres}
    Under the hypotheses of Remark \ref{rem:matlisdualizable}, 
    $p^{\dual \dual} =p. $ 
\end{lemma}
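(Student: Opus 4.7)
The plan is to unpack the definition of $p^\dual$ twice and exploit Matlis biduality together with the isomorphism-invariance built into the definition of a pair operation. To streamline bookkeeping, write $N^\perp$ for the annihilator of a submodule $N$ of a Matlis-dualizable module inside its Matlis dual, so that the convention in Definition~\ref{def:nonresidualdual} identifies $(M/N)^\vee$ with $N^\perp \subseteq M^\vee$, and identifies $(M^\vee/H)^\vee$ with $H^\perp \subseteq M^{\vee\vee}$. Under these identifications the defining formula of the smile dual collapses to
\[
p^\dual(A,B) \;=\; \eta_B^{-1}\bigl(p(A^\perp,B^\vee)^\perp\bigr),
\]
and iterating gives
\[
p^{\dual\dual}(L,M) \;=\; \eta_M^{-1}\bigl(p^\dual(L^\perp, M^\vee)^\perp\bigr), \qquad p^\dual(L^\perp, M^\vee) \;=\; \eta_{M^\vee}^{-1}\bigl(p((L^\perp)^\perp, M^{\vee\vee})^\perp\bigr).
\]

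Next I would peel off the innermost $p$. Matlis biduality gives $(L^\perp)^\perp = \eta_M(L)$ as submodules of $M^{\vee\vee}$, and the isomorphism-invariance clause of Definition~\ref{def:pairop} applied to $\eta_M : M \to M^{\vee\vee}$ rewrites $p(\eta_M(L), M^{\vee\vee}) = \eta_M(p(L,M))$. I then need the triangle-identity computation: for any submodule $N \subseteq M$ with $M$ Matlis-dualizable,
\[
\eta_{M^\vee}^{-1}\bigl(\eta_M(N)^\perp\bigr) \;=\; N^\perp \;\subseteq\; M^\vee.
\]
This follows straight from the definitions, since $g \in M^\vee$ lies in the left-hand side iff $\eta_M(x)(g) = 0$ for every $x\in N$, i.e.\ iff $g(x) = 0$ for every $x\in N$. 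Applying it with $N = p(L,M)$ yields $p^\dual(L^\perp, M^\vee) = p(L,M)^\perp$. Plugging this back gives $p^{\dual\dual}(L,M) = \eta_M^{-1}\bigl((p(L,M)^\perp)^\perp\bigr)$, and a final application of Matlis biduality in the form $(N^\perp)^\perp = \eta_M(N)$ finishes the argument: $p^{\dual\dual}(L,M) = p(L,M)$.

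The main obstacle, in my view, is the careful bookkeeping between the two nested layers of evaluation maps $\eta_M$ and $\eta_{M^\vee}$, since the annihilator $(-)^\perp$ secretly depends on the ambient module and on which level of Matlis dualization one sits at. Essentially the whole proof reduces to the single compatibility $\eta_{M^\vee}^{-1}(\eta_M(N)^\perp) = N^\perp$, which is the triangle identity coupling the two evaluation maps; once that is in place, everything else is formal cancellation using invariance of $p$ under isomorphism and the involutivity of the annihilator. The Matlis-dualizability hypothesis of Remark~\ref{rem:matlisdualizable} is what makes $\eta_M$ and $\eta_{M^\vee}$ isomorphisms and underwrites the involutivity $(N^\perp)^\perp = \eta_M(N)$ used at the very end.
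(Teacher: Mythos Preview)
Your argument is correct. The key steps---rewriting the defining formula as $p^\dual(A,B) = \eta_B^{-1}\bigl(p(A^\perp,B^\vee)^\perp\bigr)$, invoking isomorphism-invariance of $p$ under $\eta_M$, and the triangle identity $\eta_{M^\vee}^{-1}\bigl(\eta_M(N)^\perp\bigr) = N^\perp$---are all valid, and the biduality $(N^\perp)^\perp = \eta_M(N)$ does hold for Matlis-dualizable $M$ exactly as you say.

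Note, however, that the present paper is a survey and does not itself prove this lemma; it simply cites \cite[Proposition 3.6(1)]{ERGV-nonres}. So there is no in-paper proof to compare against. Your direct unwinding via the $\perp$ notation is the natural approach and is essentially what one finds in the cited source: a careful chase through the two layers of Matlis duality using the evaluation isomorphisms. The only thing you might add for completeness is a word on why the relevant pairs land in the correct domain (i.e.\ that $(L^\perp, M^\vee) \in \cP^\vee$ and $(\eta_M(L), M^{\vee\vee}) \in \cP$), but this is exactly the content of the exercise $(\cP^\vee)^\vee = \cP$ mentioned after Definition~\ref{def:nonresidualdual} together with closure of $\cP$ under isomorphism.
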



The following result adapts Theorem 3.3 of \cite{nmeRG-cidual} to the pair operations setting, giving another way to understand the dual of a pair operation:

\begin{prop}
\label{pr:kernelview}
    Let $R$ be a complete local ring and $p$ a pair operation on a class of pairs $\cP$ as in Remark~\ref{rem:matlisdualizable}.
    Let $x \in M$ and $(L,M)$ a pair.  Then $x \in p^\dual(L,M)$ if and only if $x \in \ker(g)$ for every $g \in p((M/L)^\vee,M^\vee)$.
\end{prop}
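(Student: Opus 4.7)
The plan is to unwind the definition of $p^\dual(L,M)$ and translate each identification carefully, at which point the statement becomes essentially tautological. I expect no real obstacle; the only thing to be careful about is applying the convention $(N/K)^\vee \cong \{g \in N^\vee \mid g(K) = 0\}$ twice (once in the denominator and once on the outside), and then recognizing the evaluation map $\eta_M$ for what it is.

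First I would substitute $(A,B) = (L,M)$ into the defining formula, yielding
\[
p^\dual(L,M) = \eta_M^{-1}\!\left(\left(\frac{M^\vee}{p((M/L)^\vee, M^\vee)}\right)^{\!\vee}\right).
\]
Here $(M/L)^\vee$ is, by the stated convention, the submodule of $M^\vee$ consisting of those $g$ with $g(L) = 0$, so the expression $p((M/L)^\vee, M^\vee)$ is a genuine submodule of $M^\vee$, and the quotient $M^\vee / p((M/L)^\vee, M^\vee)$ makes sense.

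Next I would apply the same convention to the outer Matlis dual: the Matlis dual of $M^\vee / p((M/L)^\vee, M^\vee)$ is identified with
\[
\{\phi \in M^{\vee\vee} \mid \phi\big(p((M/L)^\vee, M^\vee)\big) = 0\} \subseteq M^{\vee\vee}.
\]
Pulling this back through $\eta_M^{-1}$, the condition $x \in p^\dual(L,M)$ becomes: $\eta_M(x) \in M^{\vee\vee}$ annihilates every element of $p((M/L)^\vee, M^\vee)$.

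Finally, I would invoke the definition $\eta_M(x)(g) = g(x)$ for every $g \in M^\vee$. Thus $\eta_M(x)$ annihilates $p((M/L)^\vee, M^\vee)$ precisely when $g(x) = 0$, i.e.\ $x \in \ker g$, for every $g \in p((M/L)^\vee, M^\vee)$. This is exactly the stated criterion, completing the proof.
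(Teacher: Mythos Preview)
Your proof is correct and follows essentially the same approach as the paper's own proof, which simply sets $P = p((M/L)^\vee, M^\vee)$ and records the chain of equivalences $x \in p^\dual(L,M) \iff \eta_M(x)$ vanishes on $P \iff g(x)=0$ for all $g \in P$. You have spelled out the two uses of the identification $(N/K)^\vee \cong \{g \in N^\vee : g(K)=0\}$ more explicitly than the paper does, but the underlying argument is identical.
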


\begin{proof}
Write $P = p((M/L)^\vee, M^\vee)$.  We have \begin{align*}
x \in p^\dual(L,M) &\iff \eta_M(x): M^\vee \ra E \text{ vanishes on }P \\
&\iff \forall g\in P\text{, } 0 = \eta_M(x)(g) = g(x). \qedhere
\end{align*}

%
%
\end{proof}


Typically, a property a pair operation may have is equivalent to its smile dual having another property.  
For instance, $p$ is a closure operation if and only if $p^\dual$ is an interior operation. Table~\ref{tab:dual} is a chart of many such correspondences. 


\begin{table}[h]
    \begin{center}{\fbox{
\begin{tabular}{>{\centering\arraybackslash}p{3.5cm}|>{\centering\arraybackslash}p{3.5cm}|>{\centering\arraybackslash}p{4cm}}
$p$ is (a): & $p^\dual$ is (a/an): & citation: \\
\hline\hline
closure operation & interior operation & \small{\cite[Prop. 3.6(6\&7)]{ERGV-nonres}}\\
\hline
idempotent & idempotent & \small{\cite[Prop. 3.6(5)]{ERGV-nonres}}\\
\hline
extensive & intensive & \small{\cite[Prop. 3.6(2\&3)]{ERGV-nonres}}\\
\hline
order-preserving on submodules & order-preserving on submodules & \small{\cite[Prop. 3.6(4)]{ERGV-nonres}}\\
\hline
surjection-functorial & restrictable  & \small{\cite[Prop. 3.6(8)]{ERGV-nonres}}\\
\hline
functorial & cofunctorial & \small{\cite[Prop. 3.9]{ERGV-extend}} \\
\hline
surjection-cofunctorial & order-preserving on ambient modules & \small{\cite[Rmk. 3.10(2)]{ERGV-extend}}\\
\hline
hereditary & cohereditary &\small{\cite[Prop. 3.17(2\&3)]{ERGV-extend}} \\
\hline
residual & absolute & \small{\cite[Prop. 3.17(4)]{ERGV-extend}}\\
\hline
Nakayama closure & Nakayama interior & \small{\cite[Prop. 3.12]{ERGV-nonres}}
\end{tabular}
    }}
    \end{center}
    \caption{Dual Correspondences}
    \label{tab:dual}
\end{table}

It is instructive to see details for an example of this equivalence:
\begin{prop}
    Let $p$ be a pair operation on a class of pairs $\cP$ as in Remark~\ref{rem:matlisdualizable}.
    Then $p$ is residual if and only if $p^\dual$ is absolute.    
\end{prop}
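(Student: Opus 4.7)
The approach is to translate everything into conditions on functionals via the kernelview characterization (Proposition~\ref{pr:kernelview}), then exploit the contravariance of Matlis duality: an inclusion $L \subseteq N \subseteq M$ on the ``$p^\dual$-side'' corresponds to a nested triple $(M/N)^\vee \subseteq (M/L)^\vee \subseteq M^\vee$ on the ``$p$-side,'' with the canonical surjection $\pi:M^\vee \twoheadrightarrow N^\vee$ having kernel $(M/N)^\vee$ and sending $(M/L)^\vee$ onto $(N/L)^\vee$. The plan is to prove one implication, then deduce the other via $(p^\dual)^\dual = p$ from Lemma~\ref{lem:doubledual}.

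For the forward direction ($p$ residual $\Rightarrow p^\dual$ absolute), given $L \subseteq N \subseteq M$, I would apply residuality of $p$ to the triple $(M/N)^\vee \subseteq (M/L)^\vee \subseteq M^\vee$ with the surjection $\pi$ above to obtain
\[
p\bigl((M/L)^\vee, M^\vee\bigr) \;=\; \pi^{-1}\bigl(p((N/L)^\vee, N^\vee)\bigr).
\]
Then, for $x \in p^\dual(L,M)$, kernelview together with the fact that $\ker\pi = (M/N)^\vee$ lies automatically in the right-hand side (as the preimage of $0$) first forces $x$ to annihilate every element of $(M/N)^\vee$, so $x \in N$ by Matlis biduality. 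Once $x \in N$, any $g$ in $p((M/L)^\vee, M^\vee)$ acts on $x$ through its restriction $\pi(g) \in p((N/L)^\vee, N^\vee)$, so the vanishing condition for $p^\dual(L,M)$ coincides with that for $p^\dual(L,N)$; combining gives the equality $p^\dual(L,N) = p^\dual(L,M)$ under the inclusion $N \hookrightarrow M$.

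For the reverse direction I would not argue directly; instead I would prove the auxiliary implication $p$ absolute $\Rightarrow p^\dual$ residual, and then apply it to $p^\dual$ in place of $p$, using $(p^\dual)^\dual = p$. To establish the auxiliary claim, given $L \subseteq N \subseteq M$ and $\pi:M \twoheadrightarrow M/L$, I would use kernelview to rewrite $\pi(x) \in p^\dual(N/L, M/L)$ in terms of functionals $\bar g \in p((M/N)^\vee, (M/L)^\vee)$, via the identification $((M/L)/(N/L))^\vee \cong (M/N)^\vee$ sitting inside $(M/L)^\vee$. Absoluteness of $p$ applied to $(M/N)^\vee \subseteq (M/L)^\vee \subseteq M^\vee$ yields $p((M/N)^\vee, (M/L)^\vee) = p((M/N)^\vee, M^\vee)$. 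Since each $\bar g \in (M/L)^\vee$ is the pushforward of some $g \in M^\vee$ with $g(L)=0$ satisfying $\bar g(\pi(x)) = g(x)$, this recovers the kernelview condition for $x \in p^\dual(N,M)$, giving the desired equality $p^\dual(N,M) = \pi^{-1}(p^\dual(N/L, M/L))$.

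The main obstacle I anticipate is the bookkeeping across two parallel chains of submodules and the three candidate ambient modules ($M^\vee$, $(M/L)^\vee$, and the intermediate $N^\vee$ or $M/L$) in each direction. In particular, one must verify carefully that the relevant $\pi$-kernel is \emph{automatically} contained in the appropriate $p$-value, since this is what converts a statement of the form ``$g(x)=0$ for all $g$'' into the containment $x \in N$ needed for absoluteness. Tracking the dual roles of $(M/L)^\vee$ — simultaneously as the annihilator of $L$ inside $M^\vee$ and as the full dual of $M/L$ — is where a notational slip would most easily derail the argument.
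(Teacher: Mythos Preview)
Your proposal is correct and follows essentially the same approach as the paper: both directions rely on the kernelview characterization, and the reverse implication is obtained by proving the auxiliary statement ``$p$ absolute $\Rightarrow p^\dual$ residual'' and then invoking $(p^\dual)^\dual = p$. The only minor difference is that in the forward direction you work with a general intermediate $N$ (and must therefore first argue $x \in N$ via the automatic containment $\ker\pi \subseteq \pi^{-1}(p(\cdots))$), whereas the paper reduces to the special case $N = L$, where the containment $p^\dual(L,L) \subseteq L$ is automatic.
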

\begin{proof}
First suppose $p$ is residual.  Let $(L,M)$ be a pair with $L \subseteq M$; it is enough to show $p^\dual(L,L) = p^\dual(L,M)$.  Let $x\in p^\dual(L,L)$, and let $\pi: M^\vee \onto L^\vee$ be induced by the inclusion map $L \hookrightarrow M$.  Note that $\pi$ amounts to the restriction map $f \mapsto f|_L$. Then by Proposition~\ref{pr:kernelview}, for all $g \in p(0, L^\vee)$, we have $g(x) = 0$.  Now let $f \in p((M/L)^\vee, M^\vee) = \pi^{-1}(p(0, L^\vee))$ by residuality.  Then $(\pi(f))(x) = (f|_L)(x) = f(x) = 0$.  Since $f$ was arbitrary, $x \in p^\dual(L,M)$ by Proposition~\ref{pr:kernelview}.

For the opposite inclusion, let $x \in p^\dual(L,M)$ and $g \in p(0,L^\vee)$.  By surjectivity of $\pi$, we can choose $f\in M^\vee$ with $g=\pi(f) = f|_L$.  Then by residuality, $f \in \pi^{-1}(p(0,L^\vee)) = p((M/L)^\vee, M^\vee)$, so by Proposition~\ref{pr:kernelview}, $f(x)=0$.  Thus, $g(x) = (f|_L)(x) = 0$, so that $x \in p^\dual(L,L)$ by Proposition~\ref{pr:kernelview}.

For the reverse implication, by Lemma~\ref{lem:doubledual} it is equivalent to prove that if $p$ is absolute, then $p^\dual$ is residual.  Accordingly, suppose $p$ is absolute. Let $L \subseteq N \subseteq M$, and let $\pi: M\onto M/L$ and $q: M/L \onto M/N$ be the natural maps.  We want to show that $p^\dual(N,M) = \pi^{-1}(p^\dual(N/L, M/L))$.  For this, consider the injective maps $i=q^\vee: (M/N)^\vee \into (M/L)^\vee$ and $j = \pi^\vee: (M/L)^\vee \into M^\vee$.

Let $x \in p^\dual(N,M)$.  Let $g \in p(i((M/N)^\vee), (M/L)^\vee)$.  Then by absoluteness, we have $j(g) = g\circ \pi \in p(j(i((M/N)^\vee)), M^\vee)$. 
Then by Proposition~\ref{pr:kernelview}, $g(\pi(x)) = 0$.  Thus, $\pi(x) \in p^\dual(N/L, M/L)$.

For the opposite inclusion, let $x \in \pi^{-1}(p^\dual(N/L, M/L))$.  That is, $\pi(x) 
\in p^\dual(N/L, M/L)$.  Let $g \in p(j(i((M/N)^\vee)), M^\vee)$.  Then by absoluteness, we have $g = j(f) = f\circ \pi$ for some $f \in p(i((M/N)^\vee), (M/L)^\vee)$.  Thus, $0 = f(\pi(x)) = g(x)$, so $x\in p(N,M)$ by Proposition~\ref{pr:kernelview}.
\end{proof}

Having established the above results, we can make the connection between pair operation duals and 
 annihilators 
explicit.   We will expand on this idea further in Theorem \ref{thm:finintideal}, when we will discuss how it connects to test ideals.

\begin{prop}\label{pr:test}
Let $(R,\m,k)$ be a complete Noetherian local ring and $E = E_R(k)$.  Let $p$ be a pair operation on submodules of $E$. 
Then for any ideal $I$ of $R$,  $p^\dual(I,R) = \ann_R (p(\ann_E(I),E))$.  In particular, $p^\dual(R,R) = \ann p(0,E)$.

If $p$ is defined at least on all pairs of Artinian modules (rather than just those of the form $(D,E)$) and is
functorial, then $p^\dual(R,R)$ multiplies $p(A,B)$ into $A$ for every pair $(A,B)$ on which $p$ is defined, so long as $B/A$ is Artinian.
\end{prop}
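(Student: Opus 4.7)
For the first claim, the strategy is to apply Proposition~\ref{pr:kernelview} directly to the pair $(L, M) = (I, R)$. Under the canonical Matlis identification $R^\vee \cong E$ given by $g \mapsto g(1)$, the submodule $(R/I)^\vee$ corresponds to $\ann_E I$, and any $R$-linear $g: R \to E$ satisfies $g(r) = r \cdot g(1)$. Proposition~\ref{pr:kernelview} then translates the membership $r \in p^\dual(I, R)$ into the assertion that $r \cdot e = 0$ for every $e \in p(\ann_E I, E)$, which is precisely $r \in \ann_R(p(\ann_E I, E))$. The special case $I = R$ is immediate from $\ann_E R = 0$.

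For the second claim, set $J := p^\dual(R, R)$. The first step is to reduce to showing that $J \cdot p(0, C) = 0$ for every Artinian $R$-module $C$. Indeed, given any pair $(A, B)$ with $B/A$ Artinian and canonical surjection $\pi: B \onto B/A$, functoriality yields $\pi(p(A, B)) \subseteq p(0, B/A)$. Once the reduced statement is in hand, $R$-linearity of $\pi$ gives $\pi(J \cdot p(A, B)) \subseteq J \cdot p(0, B/A) = 0$, forcing $J \cdot p(A, B) \subseteq \ker \pi = A$.

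To prove the reduced statement, the plan is to propagate the already-established case $C = E$ up to $E^n$ and then across to an arbitrary Artinian $C$. Since every Artinian $R$-module is finitely cogenerated, there is an injection $\iota: C \hookrightarrow E^n$ for some $n$. Functoriality of $p$ applied to the $n$ coordinate projections $\pi_j: E^n \to E$ yields $\pi_j(p(0, E^n)) \subseteq p(0, E)$, which is killed by $J$ by the first claim; hence every coordinate of any element of $J \cdot p(0, E^n)$ vanishes, so $J \cdot p(0, E^n) = 0$. Functoriality applied to $\iota$ gives $\iota(p(0, C)) \subseteq p(0, E^n)$, and injectivity of $\iota$ pulls the annihilation back to $C$. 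The whole argument is conceptual bookkeeping rather than hard computation; the main point to watch is that functoriality is invoked at two separate steps of the second claim --- once to surject onto $B/A$, and once to interchange an arbitrary Artinian $C$ with $E^n$ --- which is precisely what forces the hypothesis that $p$ be defined on all pairs of Artinian modules rather than only those of the form $(D, E)$.
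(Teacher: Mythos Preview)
Your proposal is correct and follows essentially the same approach as the paper. The only difference is organizational: for the second claim, the paper applies functoriality in a single stroke to the composite map $q_j \circ i \circ \pi: B \to E$ (which sends $A$ to $0$), whereas you factor this composite and invoke functoriality separately at each stage $(B \onto B/A \hookrightarrow E^n \onto E)$; the substance of the argument is identical.
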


\begin{proof}
By Proposition~\ref{pr:kernelview}, for $x\in R$, we have $x \in p^\dual(I,R)$ if and only if $x \in \ker g$ for every $g \in p((R/I)^\vee,R^\vee)$.  But under the isomorphism $\mu: R^\vee \stackrel{\cong}{\rightarrow} E$ sending $g \mapsto g(1)$ and by the identification of $(R/I)^\vee$ as the submodule of $R^\vee$ of functions that vanish on $I$, we have $\mu((R/I)^\vee) = \ann_E(I)$. So since pair operations are isomorphism-invariant, it follows that $x \in p^\dual(I,R)$ if and only if $x$ annihilates $p(\ann_E(I),E)$.  The second statement follows since $\ann_E(R) =0$. 

For the third statement, let $\pi: B \onto B/A$ be the canonical surjection.  Since $B/A$ is Artinian, there is an injective map $i: B/A \hookrightarrow E^n$ for some $n \in \N$.  For each $1 \leq j \leq n$, let $q_j: E^n \onto E$ be the projection onto the $j$th factor.  Then for any $z \in p(A,B)$, we have $(q_j \circ i \circ \pi)(z) \in p(0,E)$ by functoriality, so that for any $r \in p^\dual(R,R)$, we have by the above that $0=r \cdot (q_j \circ i \circ \pi(z)) = (q_j \circ i \circ \pi)(rz)$. By properties of direct products (since the equation holds for all $j$), it follows that $i(\pi(rz)) = 0$, so that $i$ being injective shows that $\pi(rz) = 0$, so that $rz \in A$.
\end{proof}


\begin{rem}
    In particular, when the pair operation $p$ is a closure operation $\cl$ (resp. an interior operation $\intr$) on submodules of $E$, we get a dual interior (resp. closure) operation on ideals of $R$ such that $I_{\intr}:=\ann_R((\ann_E I)_E^{\cl})$ (resp. $I^\cl:=\ann_R((\ann_E I)_\intr^E)$.
\end{rem}

\subsection{Transforming operations into more amenable versions}
\label{sec:version}

 In this section we discuss several methods of forming new pair operations from known pair operations.

 \begin{rem}
 Given a class $\cP$ of pairs on a category $\cM$ of modules, then assuming $\cM$ is closed under submodule sums and intersections, the set $\cS$ of all pair operations on $\cP$ makes a \emph{complete lattice}, as follows.

 Define a relation $\leq$ on $\cS$ by saying $p\leq p'$ if and only if for all pairs $(L,M) \in \cP$, we have $p(L,M) \subseteq p'(L,M)$.  It is immediate that $\leq$ satisfies the conditions for a partial order on $\cS$.  Moreover, for any collection $\{p_\alpha\}_{\alpha \in \Lambda}$ in $\cS$, the meet $\bigwedge_\alpha p_\alpha$ for this partial order is given by $(\bigwedge_\alpha p_\alpha)(L,M) := \bigcap_{\alpha \in \Lambda} p_\alpha(L,M)$ for all $(L,M) \in \cP$, and the join $\bigvee_\alpha p_\alpha$ is given by $(\bigvee_\alpha p_\alpha)(L,M) := \sum_{\alpha \in \Lambda} p_\alpha(L,M)$.
 \end{rem}



\begin{defn}[c.f. {\cite[Definition~3.7]{ERGV-nonres}, \cite[Definition~3.26]{ERGV-extend}}]
\label{def:finitistic}
Let $R$ be a Noetherian ring, and let $p$ be a pair operation on a class of pairs $\cP$ as in Remark \ref{ourpairs}.
We define the \textit{finitistic version} $p_f$ of $p$ to be 
\[p_f(L,M)=\bigcup \{p(L\cap U,U) \mid U \subseteq M \text{ is \fg\ and } (L \cap U,U) \in \cP\}.\]
We say that $p$ is \textit{finitistic} if for every $(L,M) \in \cP$, $p=p_f$.
\end{defn}

\begin{rem}
    We note as in Lemma 3.8 of \cite{ERGV-nonres} and Lemma 3.2 of \cite{ERGV-chdual} that if a pair operation is functorial and residual, this definition coincides with taking 
    \[p_f(L,M)=\bigcup \left\{p(L,N) \mid L \subseteq N \subseteq M \text{ and } N/L \text{ is finitely-generated} \right\}.\]
\end{rem}

The above is a direct generalization of the notion of \emph{finitistic tight closure} $L^{*fg}_M$ of a submodule \cite[Definition 8.19]{HHmain}, which is closely connected to the open question of whether weakly F-regular rings must be strongly F-regular \cite{HH-sFreg,LySmFreg}.


\begin{rem}
\label{rem:setup}
    The following Proposition  is a version of \cite[Proposition 4.3]{ERGV-extend}. However, to avoid the added assumptions that this general version necessitated in \cite[Notation 4.1]{ERGV-extend}, we assume that:
\begin{enumerate}
    \item $\cM$ is the category of finitely generated $R$-modules,
    \item $\cP$ is the collection of pairs $(L,P)$ with $L \subseteq P$ in $\cM$ and $P$ projective, and
    \item $\cP' \supseteq \cP$ is the collection of pairs $(L,M)$ with $L\subseteq M$  in $\cM$. 
\end{enumerate}    
Then there is necessarily some projective module $P$ with $\pi:P \onto M$ so that $(\pi^{-1}(L),P) \in \cP$. 
\end{rem}

\begin{prop} 
\label{pr:definecohereditaryversion}
Let  $p$ be a cofunctorial pair operation defined on $\cP$ as in Remark \ref{rem:setup}.
Then we can define a cohereditary, cofunctorial pair operation $\po p {\ch}$ on $\cP'$ as follows: for a pair $(L,M) \in \cP'$, let $\pi:P \to M$ be a surjection in $\cM$ with $P$ projective such that $(\pi^{-1}(L),P) \in \cP$. Define
\[\po p {\ch} (L, M) :=\pi(p(\pi^{-1}(L),P)).\]

 In particular, if $p$ is a cohereditary, cofunctorial  pair operation defined on $\cP'$, then $\po p {\ch}=p$.
\end{prop}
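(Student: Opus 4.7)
The plan is to verify, in order: (a) the value $\po p {\ch}(L,M)$ does not depend on the chosen projective surjection $\pi$; (b) the resulting construction is isomorphism-invariant, hence a pair operation; (c) it is cofunctorial; (d) it is cohereditary; and (e) when $p$ is already cohereditary and cofunctorial on $\cP'$, the construction returns $p$ itself. The uniform tool is the universal property of projective modules, which lets any two projective presentations of $M$ be compared by lifts, combined with cofunctoriality of $p$ to transport $p$-values along those lifts.

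For (a), suppose $\pi_i : P_i \onto M$ ($i = 1, 2$) are two projective surjections with $(\pi_i^{-1}(L), P_i) \in \cP$. By projectivity of $P_1$ there exists $\alpha : P_1 \to P_2$ with $\pi_2 \alpha = \pi_1$, so $\alpha^{-1}(\pi_2^{-1}(L)) = \pi_1^{-1}(L)$. Cofunctoriality of $p$ along $\alpha$ gives $p(\pi_1^{-1}(L), P_1) \subseteq \alpha^{-1}(p(\pi_2^{-1}(L), P_2))$, whence
\[
\pi_1(p(\pi_1^{-1}(L), P_1)) = \pi_2 \alpha(p(\pi_1^{-1}(L), P_1)) \subseteq \pi_2(p(\pi_2^{-1}(L), P_2)).
\]
The reverse containment follows by swapping the roles of $P_1$ and $P_2$ and using projectivity of $P_2$. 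This is the step where the cofunctoriality hypothesis on $p$ does the real work, and I expect it to be the main obstacle; everything that follows is essentially bookkeeping built on top of this well-definedness.

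For (b), if $\phi: M \to M'$ is an isomorphism and $\pi: P \onto M$ is a projective surjection, then $\phi\pi : P \onto M'$ is a projective surjection with $(\phi\pi)^{-1}(\phi(L)) = \pi^{-1}(L)$, so evaluating the definition at $\phi\pi$ yields $\po p {\ch}(\phi(L), M') = \phi(\po p {\ch}(L, M))$. For (c), given $g: M \to M'$ in $\cM$ and $(L', M') \in \cP'$, choose projective surjections $\pi: P \onto M$ and $\pi' : P' \onto M'$ and, by projectivity of $P$, lift $g\pi$ through $\pi'$ to some $h: P \to P'$ with $\pi' h = g\pi$. Then $h^{-1}((\pi')^{-1}(L')) = \pi^{-1}(g^{-1}(L'))$, and cofunctoriality of $p$ along $h$ combined with $\pi' h = g\pi$ gives $\po p {\ch}(g^{-1}(L'), M) \subseteq g^{-1}(\po p {\ch}(L', M'))$ after a short diagram chase: any $z$ with $\pi(z) \in \po p {\ch}(g^{-1}(L'),M)$ may be chosen in $p(\pi^{-1}(g^{-1}(L')),P) \subseteq h^{-1}(p((\pi')^{-1}(L'),P'))$, so $g(\pi(z)) = \pi'(h(z)) \in \po p {\ch}(L', M')$.

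For (d), given $L \subseteq N \subseteq M$, choose a projective surjection $\pi: P \onto M$; letting $q: M \onto M/L$ be the canonical map, $q\pi : P \onto M/L$ is itself a projective surjection with $(q\pi)^{-1}(N/L) = \pi^{-1}(N)$, and evaluating the definition of $\po p {\ch}$ at $\pi$ and at $q\pi$ yields
\[
\po p {\ch}(N/L, M/L) \;=\; q\pi\bigl(p(\pi^{-1}(N), P)\bigr) \;=\; q(\po p {\ch}(N, M)) \;=\; \frac{\po p {\ch}(N, M) + L}{L}.
\]
Finally for (e), assume $p$ is cohereditary on $\cP'$ and write $K = \ker \pi$, so $M \cong P/K$ and $L$ is identified with $\pi^{-1}(L)/K$. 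Applying cohereditariness of $p$ to $K \subseteq \pi^{-1}(L) \subseteq P$ gives $p(L,M) = (p(\pi^{-1}(L), P) + K)/K = \pi(p(\pi^{-1}(L), P)) = \po p {\ch}(L, M)$, completing the proof.
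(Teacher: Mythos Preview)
The survey paper does not actually prove this proposition; it is stated as a specialization of \cite[Proposition 4.3]{ERGV-extend} under the simplifying assumptions of Remark~\ref{rem:setup}, with no argument given in the present paper. Your proof is correct and follows exactly the expected line: well-definedness via projective lifts and cofunctoriality of $p$, then isomorphism-invariance, cofunctoriality, and cohereditariness by choosing compatible projective presentations, and finally the agreement with $p$ in the cohereditary case by applying cohereditariness along $\ker\pi$. This is the natural (and essentially the only) argument, and it matches the approach in the cited source.

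One small wording issue in part (c): the phrase ``any $z$ with $\pi(z)\in \po p{\ch}(g^{-1}(L'),M)$ may be chosen in $p(\pi^{-1}(g^{-1}(L')),P)$'' is slightly awkward, since not every preimage of such an element lies in $p(\pi^{-1}(g^{-1}(L')),P)$; what you mean (and use) is that every element of $\po p{\ch}(g^{-1}(L'),M)$ is $\pi(z)$ for \emph{some} such $z$, which is immediate from the definition. The logic is fine; just tighten the sentence.
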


The cohereditary version of a pair operations can, in fact, give a new pair operation. The following proposition shows that $\po{p}{\ch}$, if different from $p$,  is generally smaller.

\begin{prop}\label{pr:sfcohereditarypairineq} \cite[Propostion 4.10]{ERGV-extend}
Let $\cP$ and $\cP'$ be as in Remark \ref{rem:setup}, $p$ a cofunctorial pair operation defined on $\cP'$, and $\po p {\ch}$ its cohereditary version on $\cP'$.  Then $\po p {\ch} \leq p$.
\end{prop}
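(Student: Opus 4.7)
The plan is to unpack the definition of $\po p {\ch}$ and apply the cofunctoriality of $p$ directly to the surjection $\pi: P \onto M$ used to build it.

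Fix a pair $(L,M) \in \cP'$, and choose (as guaranteed by Remark~\ref{rem:setup}) a surjection $\pi: P \onto M$ in $\cM$ with $P$ projective and $(\pi^{-1}(L),P) \in \cP$, so that by Proposition~\ref{pr:definecohereditaryversion}
\[
\po p {\ch}(L,M) = \pi\bigl(p(\pi^{-1}(L),P)\bigr).
\]
Since $(L,M) \in \cP'$ and $p$ is defined and cofunctorial on $\cP'$, the map $\pi$ and the submodule $L \subseteq M$ fit into the cofunctoriality inequality, giving
\[
p(\pi^{-1}(L),P) \subseteq \pi^{-1}\bigl(p(L,M)\bigr).
\]
Applying $\pi$ to both sides, and using that $\pi$ is surjective so $\pi(\pi^{-1}(X)) = X$ for any submodule $X \subseteq M$, we obtain
\[
\po p {\ch}(L,M) = \pi\bigl(p(\pi^{-1}(L),P)\bigr) \subseteq \pi\bigl(\pi^{-1}(p(L,M))\bigr) = p(L,M),
\]
which is the desired inequality $\po p {\ch} \leq p$.

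There is no real obstacle here: the main point is just matching the definition of cofunctorial (as given in the table of Section~\ref{sec:pairs}) with the construction in Proposition~\ref{pr:definecohereditaryversion}. The only subtle check is that the invocation of cofunctoriality is legitimate, i.e., that both $(\pi^{-1}(L),P)$ and $(L,M)$ lie in the domain $\cP'$ on which $p$ is given; this is immediate from the setup in Remark~\ref{rem:setup} and the fact that $\cP \subseteq \cP'$. One might also verify that the inequality is independent of the choice of projective cover $\pi$, but that is already part of the well-definedness of $\po p {\ch}$ asserted in Proposition~\ref{pr:definecohereditaryversion}.
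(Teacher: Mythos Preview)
Your proof is correct and is precisely the natural argument: unwind the definition of $\po p{\ch}$ and apply cofunctoriality of $p$ to the surjection $\pi:P\onto M$, then use surjectivity to conclude. The survey paper does not reproduce a proof here (it simply cites \cite[Proposition 4.10]{ERGV-extend}), but your argument is the expected one and matches that reference.
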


\begin{rem}
    If $p$ is extensive, then $\po p{\ch}$ is residual, so we call it the \emph{residual version} of $p$.  Moreover, if $p$ is a residual, cofunctorial pair operation defined on $\cP'$ then $\po p {\ch}=p$ \cite[Lemma 3.16, Corollary 4.5]{ERGV-extend}. 
\end{rem}

Tight closure for modules \cite{HHmain} was constructed as a residual operation, but two of the versions of integral closure for modules  (See \cite{Rees-redmod} and \cite{EHU-Ralg}) are not residual closures.  One major inspiration for our work was  the liftable integral closure defined by the first-named author and Ulrich \cite{nmeUlr-lint}, which \emph{is} a residual version of integral closure. For another example, the $J$-basically full closures (see Section \ref{sec:corehull}) are not residual, and hence taking the residual versions of these closures gives new smaller closure operations. 

The dual notions to cohereditary and residual versions of pair operations,  namely the hereditary and absolute versions, are developed in \cite[Section 5]{ERGV-extend}. They are defined on the dual category of Artinian modules, replacing projectives, $P$, with injectives, $E$, and replacing the projections $\pi:P \onto M$ with inclusions $i: M \into E$. Both the construction and the proofs are the Matlis duals of the results of this section.  We also show that these new versions give us closure and interior operations that are bigger than the their original versions. For a pre-existing example, absolute tight closure \cite[Section 8]{HHmain} (though fed through a finitistic version first), is otherwise like our hereditary version of tight closure. However, although liftable integral closure is the residual version of EHU-integral closure, in \cite[Example 9.9]{ERGV-extend} we show that the hereditary version of liftable integral closure is \emph{not} EHU-integral closure.

We will not detail these versions here, but we will discuss pair operations induced by pre-enveloping classes as this is how the EHU-integral closure was developed for modules \cite{EHU-Ralg}.  Strikingly, when the pre-enveloping class is the class of projective modules, the hereditary version of a closure agrees with the version of the closure induced by the pre-enveloping class of projective modules.   In particular,  EHU-integral closure is the same as the hereditary version of integral closure \cite[Proposition 9.6, Corollary 9.7]{ERGV-extend}.  Note, however, that the Rees-integral closure is not hereditary \cite[Example 9.8]{ERGV-extend}. 

\begin{defn}
\cite[Definition 6.1.1]{EJ-book}
Let $\cM$ be a category of $R$-modules.  Let $\cC \subseteq \cD$ be two classes of modules in $\cM$.  We say that $\cC$ is a \emph{pre-enveloping class for $\cM$ in $\cD$} if for any $M \in \cM$, there is some $C \in \cC$ and some morphism $\alpha: M \ra C$ in $\cM$, such that for any morphism $g: M \ra D$ in $\cM$ with $D \in \cD$, there is some morphism $\tilde g: C \ra D$ in $\cM$ with $g = \tilde g \circ \alpha$.  In this case the map $\alpha$ (or by abuse of notation, $C$ itself) is called a \emph{$\cC$-preenvelope of $M$ in $\cD$}.
\end{defn}

\begin{prop} \cite[Proposition 8.7]{ERGV-extend}
\label{pr:fgpreenvproj}
If $\cM$ is the category of \emph{finitely generated} $R$-modules, then the class of finitely generated projective $R$-modules is pre-enveloping. 
In fact, the class of finitely generated free modules is pre-enveloping in the class of finitely generated projectives.  
\end{prop}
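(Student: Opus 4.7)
The plan is to construct, for each finitely generated $R$-module $M$, a single map into a finitely generated free module that witnesses the pre-enveloping property for every f.g.~projective target at once. Since f.g.~free modules sit inside the f.g.~projective class, this one construction will yield both assertions: the ``In fact'' sentence (f.g.~free pre-envelopes in the larger class of f.g.~projectives) directly, and the first sentence (f.g.~projectives pre-envelope within f.g.~projectives) as a consequence, since the $\alpha$ we build targets a f.g.~free, which is in particular f.g.~projective.

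First I would use Noetherianity of $R$ (which is standing since Section~\ref{sec:version}) to show that $M^{\ast}:=\Hom_R(M,R)$ is finitely generated. If $x_1,\dots,x_n$ generate $M$, the evaluation map $M^{\ast}\to R^n$, $f\mapsto (f(x_1),\dots,f(x_n))$, is $R$-linear and injective, exhibiting $M^{\ast}$ as a submodule of a finitely generated module over a Noetherian ring, hence finitely generated. Next, I would choose generators $g_1,\dots,g_N$ of $M^{\ast}$ and define
\[
\alpha:M\longrightarrow R^N,\qquad \alpha(x):=(g_1(x),\dots,g_N(x)).
\]
This $\alpha$ is the candidate pre-envelope into a f.g.~free module.

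The remaining step is the universal factorization. Given a morphism $g:M\to P$ with $P$ f.g.~projective, I would reduce to the free target by writing $P$ as a summand of some $R^k$: pick $\iota:P\hookrightarrow R^k$ and $\pi:R^k\twoheadrightarrow P$ with $\pi\circ\iota=\mathrm{id}_P$. The composition $\iota\circ g:M\to R^k$ is determined by its coordinate functions $h_1,\dots,h_k\in M^{\ast}$, and by the choice of the $g_i$ there exist $r_{ij}\in R$ with $h_j=\sum_i r_{ij}g_i$. Assembling these coefficients gives a map $\psi:R^N\to R^k$ defined on the standard basis by $\psi(e_i)=(r_{i1},\dots,r_{ik})$, and a direct check on generators shows $\psi\circ\alpha=\iota\circ g$. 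Then $\tilde g:=\pi\circ\psi:R^N\to P$ satisfies $\tilde g\circ\alpha=g$, which is the required factorization.

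This is essentially a bookkeeping argument once the right $\alpha$ is isolated; the only nontrivial input is Step~1, where the finite generation of $M^{\ast}$ is what makes a \emph{single} target $R^N$ suffice. I do not anticipate a serious obstacle. One minor point worth flagging in the write-up is that $\tilde g$ is not claimed to be unique — only its existence is required by the definition of a pre-envelope — which spares us from worrying about the freedom in choosing the $r_{ij}$.
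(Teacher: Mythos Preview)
Your argument is correct and is the standard one. Note, however, that this survey does not actually supply a proof of the proposition --- it simply cites \cite[Proposition 8.7]{ERGV-extend} --- so there is no in-paper argument to compare your approach against.

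One point to tighten: your parenthetical that Noetherianity of $R$ ``is standing since Section~\ref{sec:version}'' is not accurate. Noetherianity appears as a hypothesis in Definition~\ref{def:finitistic} but is not declared as a blanket assumption for that section, and the paper's global Notation block in Section~\ref{sec:pairs} only assumes $R$ is commutative with unity. Your proof genuinely requires it, since the whole construction hinges on $M^*=\Hom_R(M,R)$ being finitely generated; in fact the pre-envelope condition you verify is \emph{equivalent} to surjectivity of $\alpha^*:(R^N)^*\to M^*$, hence to finite generation of $M^*$. This can fail over non-Noetherian rings: for $R=k[x,y_1,y_2,\ldots]/(xy_i,\,y_iy_j)$ and the cyclic module $M=R/(x)$ one has $M^*\cong\ann_R(x)=(y_1,y_2,\ldots)$, which is not finitely generated. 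So state the Noetherian hypothesis explicitly rather than appealing to a standing convention that is not in force.
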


\begin{rem}
 The pre-enveloping class  $\cC$ of finitely generated projectives is the class we use for the construction below along with integral closure to construct the EHU-integral closure.
\end{rem}

\begin{prop}[See {\cite[Proposition 8.9]{ERGV-extend}}]\label{pr:versalop}
Let $\cM$ be a category of $R$-modules and $\cC$ a pre-enveloping subclass. Let $\cP'=\{(L,M) \mid L,M \in \cM\}$. 
 Let $p$ be a functorial pair operation defined on  $\cP= \{(L,C) \in \cP' \mid C \in \cC\}.$  Define the pair operation $p_{h(\cC)}$ on $\cP'$ so that when $\alpha: M \ra C$ is a $\cC$-preenvelope, $p_{h(\cC)}(L,M) = \alpha^{-1}(p(\alpha(L),C))$. 
Then $p_{h(\cC)}$ is well-defined and functorial.  Moreover,
if $p$ is a closure operation, then so is $p_{h(\cC)}$.
\end{prop}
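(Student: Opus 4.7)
The plan is to verify the three claims in order: (i) the definition $p_{h(\cC)}(L,M) = \alpha^{-1}(p(\alpha(L),C))$ is independent of the chosen preenvelope $\alpha: M \to C$; (ii) $p_{h(\cC)}$ is functorial on $\cP'$; (iii) the three closure axioms transfer from $p$ to $p_{h(\cC)}$.

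For well-definedness, I would take two $\cC$-preenvelopes $\alpha_1: M \to C_1$ and $\alpha_2: M \to C_2$ of the same module $M$. Using the preenveloping property applied to $\alpha_2$ as a map from $M$ into $C_2 \in \cC \subseteq \cD$, I obtain $\tilde\alpha_2: C_1 \to C_2$ with $\alpha_2 = \tilde\alpha_2 \circ \alpha_1$. If $x \in \alpha_1^{-1}(p(\alpha_1(L),C_1))$, then $\alpha_1(x) \in p(\alpha_1(L),C_1)$, so by functoriality of $p$, $\tilde\alpha_2(\alpha_1(x)) \in p(\tilde\alpha_2(\alpha_1(L)),C_2)$; that is, $\alpha_2(x) \in p(\alpha_2(L),C_2)$. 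By the symmetric argument using a lift $\tilde\alpha_1: C_2 \to C_1$ of $\alpha_1$ through $\alpha_2$, I get the reverse containment, so the two preimages coincide.

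For functoriality, let $g: M \to M'$ be a morphism in $\cM$ and pick preenvelopes $\alpha: M \to C$ and $\alpha': M' \to C'$. The preenveloping property applied to $\alpha' \circ g: M \to C'$ produces $\tilde g: C \to C'$ with $\alpha' \circ g = \tilde g \circ \alpha$. Given $x \in p_{h(\cC)}(L,M)$, I have $\alpha(x) \in p(\alpha(L),C)$; functoriality of $p$ yields $\tilde g(\alpha(x)) \in p(\tilde g(\alpha(L)),C')$, which rewrites as $\alpha'(g(x)) \in p(\alpha'(g(L)),C')$, i.e., $g(x) \in p_{h(\cC)}(g(L),M')$.

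Finally, assume $p$ is a closure operation. Extensivity is immediate: $\alpha(L) \subseteq p(\alpha(L),C)$, so $L \subseteq \alpha^{-1}(p(\alpha(L),C))$. Order-preservation on submodules follows from the corresponding property of $p$ since $\alpha$ and $\alpha^{-1}(-)$ both preserve containment. For idempotence, write $L' := p_{h(\cC)}(L,M) = \alpha^{-1}(p(\alpha(L),C))$; extensivity gives $L' \subseteq p_{h(\cC)}(L',M)$, so the only nontrivial step is the reverse containment. The key observation, which I expect to be the main obstacle since $\alpha$ need not be surjective, is that $\alpha(L') = \alpha(\alpha^{-1}(p(\alpha(L),C))) = p(\alpha(L),C) \cap \im(\alpha) \subseteq p(\alpha(L),C)$. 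Then order-preservation and idempotence of $p$ give $p(\alpha(L'),C) \subseteq p(p(\alpha(L),C),C) = p(\alpha(L),C)$, so $p_{h(\cC)}(L',M) = \alpha^{-1}(p(\alpha(L'),C)) \subseteq \alpha^{-1}(p(\alpha(L),C)) = L'$, completing the proof.
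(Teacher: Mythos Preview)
Your argument is correct in all three parts; the well-definedness via mutual lifting of preenvelopes, functoriality via lifting $\alpha' \circ g$ through $\alpha$, and the closure axioms (with the key observation $\alpha(L') \subseteq p(\alpha(L),C)$ for idempotence) all go through as you describe. The paper itself does not supply a proof here, instead deferring to \cite[Proposition 8.9]{ERGV-extend}; your reconstruction is the standard one and matches what that reference does.
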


\subsection{The submodule selector viewpoint} \label{sec:subselect}
 Our first paper on this subject \cite{nmeRG-cidual} presented closure-interior duality through the lens of submodule selectors rather than pair operations.
However, pair operations were lurking in the background. For example, to define residual operations and closure operations, the first two named authors introduced the terminology of \emph{extensive operation} \cite[Definition 2.2]{nmeRG-cidual} which is really the same thing as a pair operation (\cite[Definition 2.2]{ERGV-nonres}) satisfying the extra property of extensivity.  In this section we will show how our results on submodule selectors relate to the context of pair operations.

\begin{defn} \cite[Definition 2.1]{nmeRG-cidual}
    Let $\cM$ be a class of $R$-modules as in Remark~\ref{ourpairs}. A \emph{submodule selector} is a function $\alpha: \cM \rightarrow \cM$ such that  $\alpha(M) \subseteq M$ for all $M \in \cM$ and for any isomorphism of $R$-modules $\phi:M \rightarrow N$ in $\cM$, we have $\phi(\alpha(M))=\alpha(\phi(M))$.
\end{defn}



In Table~\ref{ta:ss}, $L \subseteq M$ is an arbitrary inclusion in $\cM$, $\pi:M \onto Q$ an arbitrary epimorphism, and $g: M \ra N$ an arbitrary $R$-linear map.

\begin{table}[h]
\fbox{
\begin{tabular}{c|c}
Property name  & Definition\\
\hline\hline
\emph{order preserving} &   $\alpha(L) \subseteq \alpha(M)$\\
\hline
\emph{surjection-functorial}  & $\pi(\alpha(M)) \subseteq \alpha(Q)$\\
\hline
\emph{functorial}  & $g(\alpha(M)) \subseteq \alpha(N)$\\
\hline
\emph{idempotent} & $\alpha(\alpha(M))=\alpha(M)$ \\
\hline
\emph{co-idempotent}  & $\alpha(M/\alpha(M))=0$ \\
\end{tabular}
}
\caption{Submodule selector properties}
\label{ta:ss}
\end{table}

Notice that since submodule selectors are intensive by nature, we defined in \cite{nmeRG-cidual} \emph{interior operations} in terms of submodule selectors as those that are order preserving and idempotent.  However, in the latter, pair operation context, these had to be redefined as \emph{absolute} interior operations, excluding as they did the relative ones (i.e., those that depend on both the submodule and the ambient module). For pair operations, we had two notions of order preserving: order preserving on submodules and order-preserving on ambient modules.  
In the table below, when we see how to convert back and forth between submodule selectors and pair operations, it will become clear how both of the pair operation notions of order-preservation correspond to the notion of order-preservation for submodule selectors. 


\begin{defn}[{\cite[Construction 2.3]{nmeRG-cidual} and \cite[Definition 3.21]{ERGV-extend}}]
    Suppose $\alpha$ is a submodule selector and $\pi:M \onto M/L$ is the canonical surjection. We define the \textit{residual operation associated to $\alpha$} by $$r(L,M)=\rho(\alpha)(L,M) := \pi^{-1}(\alpha(M/L))$$ and the \textit{absolute operation associated to  $\alpha$} by $$g(L,M)=\gamma(\alpha)(L,M):=\alpha(L).$$
\end{defn}

\begin{rem}
    In particular, $r$ is a residual pair operation and $g$ is an absolute pair operation, as is clear from the definitions given above.
\end{rem}

 \begin{rem}
Note that these constructions are invertible.  Indeed, given an absolute pair operation $g$, one can define a submodule selector $\alpha$ via $\alpha(M) := g(M,M)$, for which we have $g=\gamma(\alpha)$.  Similarly, given a residual pair operation $r$, one can define a submodule selector $\alpha$ via $\alpha(M) := r(0,M)$, for which we have $r = \rho(\alpha)$.    
\end{rem}

 Table~\ref{ta:sscloint} exhibits some  equivalences between properties of $\alpha$, $r$, and $g$ (\cite[Proposition 2.6]{nmeRG-cidual}, \cite[Proposition 3.24]{ERGV-extend}):
\medskip

\begin{table}[h]
\fbox{
\begin{tabular}{c|c|c}
$\alpha$ has property  & $r$ has property & $g$ has property\\
\hline\hline
order preserving & cofunctorial 
$\iff$& order preserving on submodules   \\
 & order-preserving on ambient modules&$\Leftrightarrow$ restrictable \\
\hline
surjection-functorial & order preserving on submodules  & functorial
\\
\hline
functorial & cofunctorial and & functorial and \\
 & order preserving on submodules &  order preserving on submodules \\
 & $\Leftrightarrow$ functorial and & \\
  & order preserving on submodules  & \\
\hline
idempotent &  & idempotent\\
\hline
co-idempotent & idempotent & \\

\end{tabular}
}
\caption{Passing between submodule selectors and pair operations}
\label{ta:sscloint}
\end{table}


Submodule selectors were an excellent starting point from the viewpoint of tight closure and other module closures, where by definition, the closures were set up as residual closures \cite{HHmain,RG-bCMsing,PeRG}. 
In addition, test ideals were  sometimes defined as annihilators   of  a particular submodule of the injective hull of the residue field, which naturally leads to absolute operations \cite{HHmain,ScTu-survey}. 



\begin{defn}[{\cite[Definition 5.1]{nmeRG-cidual}}]
    Let $\alpha$ be a submodule selector. The \textit{finitistic version} of $\alpha$ is given by
    \[\alpha_f(M):= \sum_{L \subseteq M, L \text{ f.g.}} \alpha(L).\]
\end{defn}

\begin{rem}
    If $\cl$ is a residual, functorial closure operation and $\alpha(M):=0_M^{\cl}$ is the corresponding submodule selector, then  
    $L_M^{\cl_f}$ as defined in Definition \ref{def:finitistic} is equal to $\pi^{-1}(\alpha_f(M/L))$ where $\pi:M \to M/L$ is the standard projection map. 
    
    Further, if $g$ is the absolute operation associated to $\alpha$, then $g_f(L,M)$ as in Definition \ref{def:finitistic} is the same as $\alpha_f(L)$ as defined above.
\end{rem}

The following Theorem is a restatement of \cite[Theorem 3.3]{ERGV-chdual} in the context of closure operations:

\begin{thm}\label{thm:finintideal}
Let $(R,\m,k)$ be a complete Noetherian local ring. 
Let $\cl$ be a functorial, residual closure operation on $\cA$, the category of Artinian $R$-modules, $\cl_f$ its finitistic version, 
and $I$ an ideal of $R$. Then: \begin{align*}
    I^R_{\cl^\dual} &= \ann_R((\ann_E(I))^{\cl}_E) = \bigcap_{M \in \cA} \ann_R((\ann_M(I))^{\cl}_M) \\
    &\subseteq I_{\cl_f^{\dual}}^R = \ann_R((\ann_E(I))^{\cl_f}_E) = \bigcap_{M \subseteq E \text{ f.g.}} \ann_R((\ann_M(I))^{\cl}_M) 
    \\&=
    \bigcap_{\lambda(M) < \infty} \ann_R((\ann_M(I))^{\cl}_M)
    \subseteq \bigcap_{\lambda(R/J)<\infty} \ann_R((\ann_{R/J}(I))^{\cl}_{R/J}) \\ &= \bigcap_{\lambda(R/J)<\infty} (J:(J:I)^{\cl}_R).
\end{align*}
Moreover, the last containment is an equality when $R$ is approximately Gorenstein.  In that case if $\{J_t\}$ is a decreasing nested sequence of irreducible ideals that is cofinal with the powers of $\m$, then in fact we have \[
I_{{\cl_f}^\dual}^R = \bigcap_{t\geq 0} (J_t : (J_t : I)^{\cl}_R).
\]
\end{thm}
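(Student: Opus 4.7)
The plan is to verify the chain of equalities and inclusions from left to right, using Proposition~\ref{pr:test} for the left endpoint, building a direct-sum compatibility for $\cl$ on powers of $E$ as the main technical tool, handling the $R/J$ column by residuality, and finally collapsing the countable intersection using the structure of approximately Gorenstein rings.

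First, the equality $I^R_{\cl^\dual} = \ann_R((\ann_E I)^\cl_E)$ is precisely Proposition~\ref{pr:test}. To extend to the intersection over all Artinian modules, I would establish the direct-sum compatibility $((\ann_E I)^n)^\cl_{E^n} = ((\ann_E I)^\cl_E)^n$: the containment $\subseteq$ follows from functoriality applied to the projections $\pi_i: E^n \twoheadrightarrow E$, and $\supseteq$ from functoriality applied to the coordinate inclusions $\iota_i: E \hookrightarrow E^n$. Every Artinian $M$ embeds into some $E^n$, so functoriality plus injectivity of the embedding shows any $r$ annihilating $(\ann_E I)^\cl_E$ annihilates $(\ann_M I)^\cl_M$; the reverse inclusion is immediate by taking $M = E$. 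The middle containment $I^R_{\cl^\dual} \subseteq I^R_{\cl_f^\dual}$ is simply the order-reversal of $\cl_f \leq \cl$ under $\ann_R$.

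Next, for the $\cl_f$ row, the definition of the finitistic version gives $(\ann_E I)^{\cl_f}_E = \bigcup_{U \subseteq E,\,\text{f.g.}} (\ann_U I)^\cl_U$, whose annihilator in $R$ is the intersection of the individual annihilators. To upgrade from $\bigcap_{M \subseteq E,\,\text{f.g.}}$ to $\bigcap_{\lambda(M)<\infty}$, I would first note that for a finitely generated $M$ the finitistic and non-finitistic closures of $\ann_M I$ in $M$ coincide (take $U = M$ in the finitistic definition). Then embedding a finite length module $M$ into $E^n$, projecting onto the factors to produce f.g. submodules of $E$, and applying a direct-sum decomposition for $\cl_f$ on $E^n$ (derived in the same way as above) reduces the question to f.g. submodules of $E$. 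The step to $\bigcap_{\lambda(R/J)<\infty}$ is just restriction to a subfamily. The equality $\ann_R((\ann_{R/J} I)^\cl_{R/J}) = (J:(J:I)^\cl_R)$ uses $\ann_{R/J} I = (J:I)/J$ together with residuality of $\cl$ applied to $\pi:R\twoheadrightarrow R/J$ (noting $J\subseteq(J:I)$): residuality gives $((J:I)/J)^\cl_{R/J} = (J:I)^\cl_R/J$, and the annihilator in $R$ of this submodule of $R/J$ is by definition $(J:(J:I)^\cl_R)$.

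For the approximately Gorenstein case, I would use that a cofinal decreasing sequence $\{J_t\}$ of irreducible $\m$-primary ideals yields $E = \bigcup_t (0:_E J_t)$, with each $(0:_E J_t) \cong R/J_t$ since $R/J_t$ is a zero-dimensional Gorenstein ring and hence its own injective hull of the residue field. Every finite-length $M$ is an $R/J_t$-module for all sufficiently large $t$, and as such embeds into some $(R/J_t)^n$; the direct-sum compatibility on copies of $R/J_t$ combined with functoriality then shows that any $r$ killing $(\ann_{R/J_t} I)^\cl_{R/J_t}$ for every $t$ kills $(\ann_M I)^\cl_M$ for every finite-length $M$. Thus the intersection $\bigcap_{\lambda(R/J)<\infty}$ collapses to $\bigcap_t (J_t:(J_t:I)^\cl_R)$, giving the final equality. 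The main obstacle throughout is verifying cleanly the direct-sum compatibility for $\cl$ and $\cl_f$ using only functoriality and residuality; once in hand, the remainder of the argument is bookkeeping with annihilators, colon ideals, and embeddings.
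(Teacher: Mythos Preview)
Your proposal is correct and follows essentially the same approach the paper intends: the paper does not spell out a full proof here but points to Proposition~\ref{pr:test} as the kernel (whose proof already uses the embedding of an Artinian module into $E^n$ together with functoriality along the projections), and refers to \cite[Theorem~3.3]{ERGV-chdual} for the remaining steps, which are exactly the direct-sum compatibility, the finitistic bookkeeping, the residuality computation for $R/J$, and the approximately Gorenstein collapse you describe. Your write-up is, if anything, more explicit than the paper's own treatment.
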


\begin{rem}
    The proof works the same way as the proof of the original result, with the same notation substitutions as in the statement of the result above.  The kernel of the result and its proof can be found in Proposition~\ref{pr:test} above.

    In fact, the proof doesn't require idempotence, so the result holds for pair operations that are functorial, residual, extensive, and order-preserving on submodules.
\end{rem}

If we choose $I=R$ here or in Proposition \ref{pr:test}, we get an analogue of the relationship between tight closure and its test ideal. For tight closure, if $(R,\m,k)$ is local,  
the test ideal is defined in the literature both as the ideal generated by elements that always multiply $L_M^*$ into $L$, and as $\ann_R 0_{E_R(k)}^{*}$. The roots of this idea are found in \cite[Theorem 8.23]{HHmain}. 
This relationship is extended to residual, functorial closure operations in \cite{PeRG} and \cite{ERGV-chdual}, using the following definitions:

\begin{defn}\label{def:testideals}
Let $\cl$ be a closure operation on $R$-modules. The (big) $\cl$-test ideal is
\[\tau_{\cl}(R)=\bigcap_{L \subseteq M} L:_R L_M^{\cl}.\]
The finitistic $\cl$-test ideal is
\[\tau_{\cl}^{fg}(R)=\bigcap_{L \subseteq M \text{ f.g.}} L:_R L_M^{\cl}.\]
\end{defn}

The function of the test ideal is to give a single ideal that is larger when the closure operation is close to trivial, and smaller when the closure operation is large. The smaller the test ideal is, the more singular the ring is. 
The test/multiplier ideals of \cite{maschwedetestmultiplierideals} are also examples of this type of construction.




\subsection{Meets, joins, and limits of pair operations}\label{sec:lim}

We discuss meets and joins of posets of pair operations
, explore what properties are preserved under meet and join, 
and show how they are dual. We will later apply this to better understand operations like closure operations coming from a family of modules.

\begin{defn}
    Given pair operations $p,p'$ both defined on the class of pairs $\cP$ as in Remark \ref{ourpairs}, we say that $p \le p'$ if $p(L,M) \subseteq p'(L,M)$ for every pair of $R$-modules $L \subseteq M$ where both are defined. This is a partial order on the set of pair operations.
\end{defn}

\begin{prop}\label{pr:completelattice}
The pair operations on $\cP$ form a \emph{complete lattice}.  Namely, if $\{p_i \mid i \in \Gamma\}$ is a collection of pair operations on $\cP$, then $\bigvee_{i \in \Gamma} p_i$ is given by $(\bigvee_i p_i)(L,M) := \sum_i p_i(L,M)$, and $\bigwedge_{i \in \Gamma} p_i$ is given by $(\bigwedge_i p_i)(L,M) = \bigcap_i p_i(L,M)$.
\end{prop}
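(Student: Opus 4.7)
The proof is essentially a routine verification, so the plan is to check the two parts in turn: first that the proposed operations $\bigvee_i p_i$ and $\bigwedge_i p_i$ are genuinely pair operations in the sense of Definition~\ref{def:pairop}, and second that they are indeed the supremum and infimum of the family $\{p_i\}$ under the pointwise order $\leq$.

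For well-definedness, given $(L,M) \in \cP$, each $p_i(L,M)$ is a submodule of $M$, so both $\sum_i p_i(L,M)$ and $\bigcap_i p_i(L,M)$ are submodules of $M$. It remains to verify isomorphism-invariance. Suppose $\phi: M \to M'$ is an isomorphism in $\cM$ with $(L,M) \in \cP$. For the join, since $\phi$ is an $R$-linear map, it commutes with sums of submodules, so
\[
\phi\Bigl(\sum_i p_i(L,M)\Bigr) = \sum_i \phi(p_i(L,M)) = \sum_i p_i(\phi(L),M'),
\]
where the last equality uses isomorphism-invariance of each $p_i$. For the meet, since $\phi$ is bijective, it commutes with intersections, giving
\[
\phi\Bigl(\bigcap_i p_i(L,M)\Bigr) = \bigcap_i \phi(p_i(L,M)) = \bigcap_i p_i(\phi(L),M').
\]

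For the lattice structure, observe that for any fixed $j \in \Gamma$ and any pair $(L,M) \in \cP$, $p_j(L,M) \subseteq \sum_i p_i(L,M)$ and $\bigcap_i p_i(L,M) \subseteq p_j(L,M)$, so $\bigvee_i p_i$ is an upper bound and $\bigwedge_i p_i$ is a lower bound for the family. Conversely, if $q$ is any pair operation on $\cP$ with $p_j \leq q$ for all $j$, then $p_j(L,M) \subseteq q(L,M)$ for every pair, hence $\sum_i p_i(L,M) \subseteq q(L,M)$, so $\bigvee_i p_i \leq q$. The dual argument shows that any lower bound $q$ of the family satisfies $q \leq \bigwedge_i p_i$. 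This confirms that both extremal pair operations exist for every family, which is precisely the definition of a complete lattice.

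There is no real obstacle here; the only minor point worth flagging is that isomorphism-invariance for the meet uses bijectivity of $\phi$ (to commute with intersections), while for the join it uses only $R$-linearity. No other properties of pair operations are needed, so the proof requires nothing beyond Definition~\ref{def:pairop}.
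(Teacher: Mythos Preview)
Your proof is correct and follows essentially the same approach as the paper's own proof. The only difference is that you spell out the isomorphism-invariance check explicitly, whereas the paper simply asserts that ``these definitions give pair operations''; your verification of the lattice properties (upper/lower bound plus least/greatest) matches the paper's bullet-point argument exactly.
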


\begin{proof}
It is clear that these definitions give pair operations.  To see that they have the right lattice properties, note the following: \begin{itemize}
    \item If $p_i \leq q$ for all $i$, then for all pairs $(L,M)$ we have $\sum_i p_i(L,M) \subseteq q(L,M)$.
    \item For any $j \in \Gamma$, $p_j(L,M) \subseteq \sum_i p_i(L,M)$.
    \item If $p_i \geq q$ for all $i$, then for all pairs $(L,M)$ we have $q(L,M) \subseteq \bigcap_i p_i(L,M)$.
    \item For any $j \in \Gamma$, we have $p_j(L,M) \supseteq \bigcap_i p_i(L,M)$. \qedhere
\end{itemize}
\end{proof}

\begin{defn}[See {\cite[Definitions 7.1 and 7.3]{nmeRG-cidual}} for the submodule selector versions]
Let $\Gamma$ be a directed poset (i.e., for all $i,j \in \Gamma$, there is a $k \in \Gamma$ such that $i,j \le k$), and $\{p_j\}_{j \in \Gamma}$ a set of pair operations defined on the class of pairs $\cP$ as in Remark \ref{ourpairs} such that if $i \le j$, $p_i \leq p_j$. Define $\displaystyle \varinjlim_{j \in \Gamma} p_j = \bigvee_{j \in \Gamma} p_j$. 
 Note that for any pair $(L,M)$, we also have $\bigg(\displaystyle \varinjlim_{j \in \Gamma} p_j\bigg)(L,M) = \displaystyle \bigcup_{i \in \Gamma} p_j(L,M).$

For a poset $(P,\leq)$, its \emph{dual} $(P^\vee, \preccurlyeq)$ is the poset whose elements are the same as $P$ and such that $p \preccurlyeq q$ precisely when $q \leq p$.

Let $\Omega$ be an inverse poset (i.e., $\Omega^\vee$ is directed) and $\{p'_j\}_{j \in \Omega}$ a set of pair operations  such that if $i \leq j$, then $p'_i\leq p'_j$. Define $\displaystyle \varprojlim_{j \in \Omega} p'_j = \bigwedge_{j \in \Omega} p'_j$. Thus, for any pair $(L,M)$, $\bigg(\displaystyle \varprojlim_{j \in \Omega} p'_j \bigg)(L,M)=\displaystyle \bigcap_{i \in \Gamma} p_j(L,M)$.
\end{defn}

The following result collects some generalizations of \cite[Propositions 7.2 and 7.4]{nmeRG-cidual} from limits of submodule selectors to meets and joins 
of pair operations, along with many additional easily-proved results.

 \begin{prop} \label{pr:meetjoin}
     Let $\Sigma$ be a nonempty indexed set. 
 Let $\{p_j \mid j \in \Sigma\}$ and $\{p_j' \mid j \in \Sigma\}$ be indexed sets of pair operations.  Let $p = \displaystyle \bigvee_{j \in \Sigma} p_j$ and $p' = \displaystyle \bigwedge_{j \in \Sigma} p_j'$.  
 Consider the following properties \small{$\mathscr{P}$}:
 \vspace{-10pt}
 \begin{multicols}{2}
     \begin{enumerate}
         \item \small{$\mathscr{P}=$} order preserving on submodules.
         \item \small{$\mathscr{P}=$} order preserving on ambient modules.
         \item \small{$\mathscr{P}=$} surjection-functorial.
         \item \small{$\mathscr{P}=$} functorial.
         \item \small{$\mathscr{P}=$} restrictable.
         \item \small{$\mathscr{P}=$} surjection-cofunctorial.
         \item \small{$\mathscr{P}=$} cofunctorial.  
         \item \small{$\mathscr{P}=$} absolute.
         \item \small{$\mathscr{P}=$} residual.
         \item \small{$\mathscr{P}=$} extensive.
         \item  \small{$\mathscr{P}=$} intensive.
         \item  \small{$\mathscr{P}=$} hereditary.
         \item \small{$\mathscr{P}=$} closure operation.
         \item \small{$\mathscr{P}=$} cohereditary.
         \item \small{$\mathscr{P}=$} interior operation.
     \end{enumerate}
     \end{multicols}
     If all of the $p_j$ (resp. $p'_j$)  satisfy any property \small{$\mathscr{P}$} in (1)--(11) then $p$ (resp. $p'$) also satisfies \small{$\mathscr{P}$}.

     If all of the  $p'_j$  satisfy property \small{$\mathscr{P}$} in (12) (resp. (13)) then so does $p'$.

     If all of the $p_j$ satisfy property \small{$\mathscr{P}$} in (14) (resp. (15)) then so does $p$.
 \end{prop}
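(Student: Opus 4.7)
The plan is straightforward: since the join is defined as the submodule sum and the meet as the submodule intersection, I will reduce each of the fifteen properties to an elementary distributivity question and verify it by direct computation. Properties (1)--(11) all have the form ``for every relevant $(L,M)$ there is a single inclusion relating $p_j(L,M)$ to something else,'' and such inclusions pass through both $\sum_j$ and $\bigcap_j$ because both operations are monotone in their arguments. I would bundle these into a short table, giving two or three representative verifications in detail: extensivity and intensity are immediate; for surjection-cofunctoriality I would note that $\pi^{-1}$ commutes with arbitrary intersections and that for sums $\pi^{-1}(A) + \pi^{-1}(B) \subseteq \pi^{-1}(A+B)$ suffices; and for residuality (9) I would observe that, for a surjection $\pi$, $\pi^{-1}$ commutes with arbitrary sums of submodules (because $\ker \pi$ is contained in every preimage of a submodule), giving $\sum_j \pi^{-1}(Q_j) = \pi^{-1}(\sum_j Q_j)$.

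Properties (12) hereditary and (14) cohereditary are the asymmetric cases where distributivity dictates the direction. If $L \subseteq N \subseteq M$ and each $p'_j$ is hereditary, then intersection commutes with intersection:
\[
\bigcap_j p'_j(L,N) \;=\; \bigcap_j \bigl(p'_j(L,M) \cap N\bigr) \;=\; \Bigl(\bigcap_j p'_j(L,M)\Bigr) \cap N,
\]
proving (12). Dually, taking the quotient by $L$ commutes with sums, so if each $p_j$ is cohereditary then $\sum_j (p_j(N,M)+L)/L = (\sum_j p_j(N,M) + L)/L$, proving (14). These manipulations fail in the opposite direction --- sums do not commute with intersections in general --- so meet need not preserve cohereditariness and join need not preserve hereditariness, which is precisely why the proposition splits into three separate claims.

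The heart of the argument is (13) and (15), where the nontrivial content is idempotency. For (13), assume each $p'_j$ is a closure operation. By items (1) and (10), $p' = \bigwedge_j p'_j$ is already extensive and order-preserving on submodules, so $p'(L,M) \subseteq p'(p'(L,M), M)$ is automatic; for the reverse inclusion I use that $p'(L,M) \subseteq p'_j(L,M)$ for every $j$, so order-preservation on submodules and idempotency of $p'_j$ yield
\[
p'_j\bigl(p'(L,M), M\bigr) \;\subseteq\; p'_j\bigl(p'_j(L,M), M\bigr) \;=\; p'_j(L,M),
\]
and intersecting over $j$ gives $p'(p'(L,M),M) \subseteq \bigcap_j p'_j(L,M) = p'(L,M)$. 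The argument for (15) is the formally dual pattern: with each $p_j$ intensive, order-preserving on submodules, and idempotent, $p = \bigvee_j p_j$ inherits the first two properties by earlier items, and for idempotency one shows
\[
p_j(L,M) \;=\; p_j(p_j(L,M), M) \;\subseteq\; p_j(p(L,M), M)
\]
using $p_j(L,M) \subseteq p(L,M)$ together with order-preservation on submodules, then sums over $j$ to conclude $p(L,M) \subseteq p(p(L,M),M)$; the reverse inclusion follows from intensity.

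The main obstacle is not any single step but the bookkeeping: several properties (notably (3)--(7) and (9)) demand care about whether $\pi$, $\pi^{-1}$, or a preimage along an injection is being applied, and about which of these commutes with $\sum$ versus $\bigcap$. One must also resist the temptation to transpose the meet/join arguments across the hereditary/cohereditary or closure/interior divides, since this is exactly where the proposition's asymmetry lives.
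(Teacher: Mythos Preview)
Your proposal is correct and follows essentially the same approach as the paper: both treat (1)--(2), (8), (10)--(11) as immediate, verify (12) and (14) via the distributivity of intersection with intersection and of quotient with sum, and prove idempotence in (13) and (15) by the same sandwich argument using $p'(L,M) \subseteq p'_j(L,M)$ (resp.\ $p_j(L,M) \subseteq p(L,M)$) together with order-preservation and idempotence of the individual $p_j$. Your formulation of (9) for the join---that $\pi^{-1}$ commutes with arbitrary sums of submodules because $\ker\pi$ lies in every preimage---is a slightly cleaner packaging of what the paper does by explicit element-chasing, but it is the same argument.
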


  \begin{proof}
    The proofs of (1)-(2), (8), and (10)-(11) are immediate.

     (3) or (4) (join) Suppose all the $p_i$ are (surjection-)functorial, let $L \subseteq M$ be a pair and $g: M \ra M'$ a (surjective) map.  
     Let $x\in (\bigvee_j p_j)(L,M)$.  Then there exist $j_1, \ldots, j_n \in \Sigma$ and $x_i \in p_{j_i}(L,M)$ with $x = \sum_{i=1}^n x_i$.  Then for all $i$, $g(x_i) \in p_{j_i}(g(L),M')$, since the $p_{j_i}$ are  (surjection-)functorial.  Thus, $g(x) = g(\sum_i x_i) = \sum_i g(x_i) \in \left(\bigvee_j p_j\right)(g(L),M')$. 

     (3) or (4) (meet) Let $g$ be as above and let $x\in p'(L,M)$.  Then for all $j\in \Sigma$, $x \in p_j'(L,M)$.  It follows that for all $j$, $g(x) \in p_j'(g(L),M')$.  That is, $g(x) \in \bigcap_j p_j'(g(L),M') = p'(g(L),M')$.

       (5) (join) Let $L, K \subseteq M$. Then \[ p(L\cap K, K) = \sum_{j\in \Sigma} p_j(L\cap K, K) \subseteq \sum_{j \in \Sigma} p_j(L,M) = p(L,M).\]

     (5) (meet) 
     Let $L,K,M$ be as above.  Then \[p'(L\cap K, K)= \bigcap_{j\in \Sigma} p'_j(L\cap K, K) \subseteq \bigcap_{j\in \Sigma} p'_j(L,M)=p'(L,M).\]  

     (6) or (7) (join) Suppose that the $p_j$ are all (surjection-)cofunctorial, 
     $g:M \rightarrow M'$ is a (surjective) $R$-module homomorphism, and $L \subseteq M'$.  Suppose $x \in p(g^{-1}(L),M)$.  Then there exist $j_1, \ldots,j_n \in \Sigma$ and $x_i \in p_{j_i}(g^{-1}(L), M)$ such that $x =\sum_{i=1}^n x_i$.  As each $p_{j_i}$ is (surjection-)cofunctorial, $ p_{j_i}(g^{-1}(L),M) \subseteq g^{-1}(p_{j_i}(L,M'))$.  Thus, $g(x_i) \in p_{j_i}(L,M')$, so $g(x) = \sum_i g(x_i) \in \sum_i p_{j_i}(L,M') \subseteq (\bigvee_j p_j)(L,M') = p(L,M')$, which in turn implies that $x \in g^{-1}(p(L,M'))$. Thus $p$ is (surjection-)cofunctorial. 

     (6) or (7) (meet) Let $L,M,M',g$ be as above.  We have \[
     p'(g^{-1}(L),M) = \bigcap_{j \in \Sigma} p_j'(g^{-1}(L),M) \subseteq \bigcap_{j \in \Sigma} g^{-1}(p_j'(L,M')) = g^{-1}\Big(\bigcap_{j \in \Sigma} p_j'(L,M')\Big) = g^{-1}(p(L,M')).
     \]
     

     (9) (join) Let $L \subseteq N \subseteq M$ and let $\pi: M \onto M/L$ be the natural map.  First note that for any $j \in \Sigma$, we have $L = \ker\pi \subseteq \pi^{-1}(p_j(N/L, M/L)) = p_j(N,M)$.  Hence, $L \subseteq p(N,M)$, since $p(N,M)$ contains all the $p_j(N,M)$.

     First suppose $x\in p(N,M)$.  Then there exist $j_1, \ldots, j_n \in \Sigma$ and $x_i \in p_{j_i}(N,M)$ for each $i$, such that $x=\sum_i x_i$.  But then $\pi(x_i) \in p_{j_i}(N/L,M/L)$, so that $\pi(x) = \sum_{i=1}^n \pi(x_i) \in p(N/L,M/L)$.  Hence, $x \in \pi^{-1}(p(N/L,M/L))$.
    
    Conversely suppose $x \in \pi^{-1}(p(N/L,M/L))$.  Then there exist $j_1, \ldots, j_n \in \Sigma$ and $y_i \in p_{j_i}(N/L,M/L)$ with $\pi(x) = \sum_i y_i$.  For each $i$, choose $x_i \in M$ with $\pi(x_i)=y_i$.  Then $x_i \in \pi^{-1}(p_{j_i}(N/L,M/L)) = p_{j_i}(N,M)$, so that $\sum_{i=1}^n x_i \in p(N,M)$.  Since $\pi(x) = \pi(\sum_{i=1}^n x_i)$, we have $x-\sum_{i=1}^n x_i \in \ker\pi = L \subseteq p(N,M)$.  It follows that $x\in p(N,M)$.

     (9) (meet) We have \begin{align*}
     x \in p'(N,M) &\iff \forall j\in \Sigma \text{, } x \in p_j'(N,M) = \pi^{-1}(p_j'(N/L, M/L)) \\
     &\iff \forall j \in \Sigma \text{, } \pi(x) \in p_j'(N/L,M/L) \iff \pi(x) \in p'(N/L,M/L) \\
     &\iff x \in \pi^{-1}(p'(N/L,M/L))
     \end{align*}

     (12) Let $L \subseteq N \subseteq M$ 
     . Then: \[
     p'(L,M)\cap N = \left(\bigcap_{j \in \Sigma} p_j'(L,M)\right) \cap N = \bigcap_{j \in \Sigma} (p_j'(L,M) \cap N) = \bigcap_{j \in \Sigma} p_j'(L,N) = p'(L,N).
     \]

     (13) Suppose the $p'_j$ are closure operations for all $j$ and $L \subseteq M$.  Since  $p' = \displaystyle \bigwedge_{j \in \Gamma} p_j'$, then by parts (1) and (10), $p'$ is order preserving on submodules and extensive.  Thus, we need only show that $p'$ is idempotent.  By extensivity, we have that $L \subseteq p'(L,M)$, and then we obtain that $p'(L,M) \subseteq p'(p'(L,M),M)$ since $p'$ is order preserving on submodules.  Suppose $x \in p'(p'(L,M),M)$, then $x \in p'_j(p'(L,M),M)$ for all $j$, so that since $p' \leq p'_j$ and $p'_j$ is order-preserving on submodules, we have  $x \in p'_j(p'_j(L,M),M)=p'_j(L,M)$ for all $j$ since $p'_j$ is idempotent.  Hence, $x \in p'(L,M)$.

     (14) Let $L \subseteq N \subseteq M$, and $x\in M$.  Suppose $x+L \in p(N/L,M/L)$.  Then there exist $j_1, \ldots, j_n \in \Sigma$ and $x_1, \ldots, x_n \in M$ such that $x+L = \sum_{i=1}^n x_i + L$ in $M/L$, and such that  for each $1\leq i \leq n$, $x_i + L \in p_{j_i}(N/L,M/L) = \frac{p_{j_i}(N,M)+L}L$ since $p_{j_i}$ is cohereditary.  Thus, \[
     x+L \in \frac{\left(\sum_i p_{j_i}(N,M)\right)+L}L \subseteq \frac{p(N,M)+L}L.
     \]
     Conversely, suppose $x+L \in \frac{p(N,M)+L}L$.  Then there is some $\ell \in L$ with $x+\ell \in p(N,M)$, whence there exist $j_1, \ldots, j_n \in \Sigma$ and $x_i \in p_{j_i}(N,M)$ with $x+\ell = \sum_{i=1}^n x_i$.  But since $x_i \in p_{j_i}(N,M)$, we have $x_i + L \in \frac{p_{j_i}(N,M)+L}L = p_{j_i}(N/L, M/L)$ since $p_{j_i}$ is cohereditary.  Thus, $x+L = \sum_i x_i + L \in p(N/L, M/L)$.

     (15) Note that $p$ is order-preserving on submodules by (1) and intensive by (11), so we need only show it is idempotent.  By intensivity we have $p(L,M) \subseteq L$, so that by the order-preservation property we have $p(p(L,M),M) \subseteq p(L,M)$.  Accordingly, we need only show that $p(L,M)  \subseteq p(p(L,M),M)$.  So let $x \in p(L,M)$.  Then there exist $j_1, \ldots, j_n \in \Sigma$ with $x_i \in p_{j_i}(L,M)$ for each $i$ and $x = \sum_{i=1}^n x_i$.  But since $p_{j_i}$ is idempotent and order-preserving on submodules and since $p_{j_i} \leq p$, we have \[
     x_i \in p_{j_i}(L,M) = p_{j_i}(p_{j_i}(L,M),M) \subseteq p_{j_i}(p(L,M),M).
     \]
     Thus, $x = \sum_i x_i \in p(p(L,M), M)$, completing the proof that $p$ is an interior operation.
 \end{proof}

\begin{example}
Although 
meets of closure operations are closure operations, the same is not true of joins. 
 For example, let $R = k[x,y]$ be the polynomial ring in two variables $x,y$ over a field $k$, and let $\cl$ and $\cl'$ be the  operations given by \[
I^\cl = \begin{cases}
    (x) &\text{if } I \subseteq (x),\\
    R &\text{otherwise,}
\end{cases} \quad \text{and }
I^{\cl'} := \begin{cases}
    (y) &\text{if } I \subseteq (y),\\
    R &\text{otherwise.}
    \end{cases}
\]
It is immediate that $\cl$ and $\cl'$ are closure operations on the ideals of $R$.  Let $\rp = \cl \vee \cl'$.  Then $(0)^\rp = (x,y)$, but $(x,y)^\rp = R$, whence $\rp$ is not idempotent and thus not a closure operation.

Similarly, although joins of interior operations are interior operations, the same is not true of meets. Indeed, if we define interior operations $\intr$ and $\intr'$ on the ideals of $R$ by \[
I_\intr = \begin{cases}
    (x) &\text{if } x\in I,\\
    0 &\text{otherwise,}
    \end{cases} \quad \text{and } I_{\intr'} = \begin{cases}
    (y) &\text{if } y\in I,\\
    0 &\text{otherwise.}
    \end{cases}
\]
Let $\riq = \intr \wedge \intr'$.  Then $R_\riq = (xy)$, but $(xy)_\riq = 0$, so $\riq$ is not idempotent and thus not an interior operation. Thus, meets of interior operations need not be interior operations.
\end{example}

\begin{example} Although the cohereditary property is closed under joins, the same is not true for meets, even when they are inverse limits.  In particular, let $R = k[x,y]$, and for all positive integers $j$, let $p_j'$ be defined on pairs of $R$-modules by $p_j'(N,M) = x^jN$.  This is a cohereditary (and absolute) operation. Let $p' = \displaystyle \varprojlim_j p_j'$, so that $p'(N,M) = \bigcap_j x^j N$.  
 Let $I = (xy-y)R$.  Then $\frac{p'(R,R)+I}I = \frac{(\bigcap_j x^jR) +I}I = I/I=0$, since $\bigcap_j x^jR=0$. 
 But for all $j\geq 1$, we have $y-x^jy = (\sum_{i=0}^{j-1} x^i)(y-xy)\in I$, so $y \in (x^jR) + I$ for all $j$.  Hence, $y+I \in\frac{\bigcap_j ((x^jR)+I)}I  = \bigcap_j x^j (R/I)=p'(R/I,R/I)$.  Since $y\notin I$, $p'$ is not cohereditary.

\end{example}

\begin{example}
Although the hereditary property is closed under meets, the same is not true for joins.
For example, let $R = k[x,y, z,w] /(xz,yw)$, where $k$ is a field and $x,y,z,w$ are indeterminates in the ambient polynomial ring 
Let $p,q$ be the pair operations on pairs of $R$-modules given by $p(L,M) := (L:_Mx)$ and $q(L,M) := (L:_My)$. It is clear that $p$ and $q$ are hereditary.  However, $p \vee q$ is not.

Indeed, let $J = (z+w)R$.  Then $p(0,R) = (0:_Rx) = zR$ and $q(0,R) = (0:_Ry) = wR$, so that $J \subseteq p(0,R) + q(0,R) = (p\vee q)(0,R)$ since  $z+w \in zR+wR$, whence $(p \vee q)(0,R) \cap J = J =  (z+w)R$.  On the other hand, $p(0,J) = (0:_Jx) =  zR \cap J = (z^2 + zw, yz)$ and $q(0,J) = (0:_Jy)= wR \cap J = (zw+w^2, xw)$, so that $(p \vee q)(0,J) =  (z^2 + zw, yz, zw + w^2, xw)$, which is a proper subset of $J$.  That is, $(p \vee q)(0,R) \cap J \neq (p \vee q)(0,J)$, so $p\vee q$ is not hereditary, even for pairs consisting of ideals.
\end{example}

Even so, the hereditary property is closed under \emph{direct limits}, as the next result shows.

 \begin{prop} \label{pr:heredlim} Let $\Gamma$ be a directed poset
 . Let $\{p_j \mid j \in \Gamma\}$ be pair operations such that $p_i \leq p_j$ whenever $i \leq j$
 .  Let
 $p=\displaystyle\varinjlim_{j \in \Gamma} p_j$
 .
 If all of the $p_j$ 
 are hereditary, then so is $p$.
 \end{prop}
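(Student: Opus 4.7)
The plan is to exploit the key difference between a general join and a directed limit: for a directed system the join $\bigvee_j p_j(L,M)$ coincides with the union $\bigcup_j p_j(L,M)$, and intersection with a fixed submodule distributes over unions (even though it does not distribute over arbitrary sums). This is precisely what fails in the earlier counterexample showing that joins need not preserve hereditariness.

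Concretely, I would first record that since $\Gamma$ is directed and the family $\{p_j(L,M)\}_{j \in \Gamma}$ is upward-directed under inclusion (because $p_i \le p_j$ whenever $i \le j$), the definition in Proposition~\ref{pr:completelattice} gives
\[
p(L,M) \;=\; \sum_{j \in \Gamma} p_j(L,M) \;=\; \bigcup_{j \in \Gamma} p_j(L,M)
\]
for every pair $(L,M) \in \cP$. This step is essentially the observation already noted in the definition of direct limits of pair operations.

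Next, given $L \subseteq N \subseteq M$, I would compute
\[
p(L,N) \;=\; \bigcup_{j \in \Gamma} p_j(L,N) \;=\; \bigcup_{j \in \Gamma}\bigl(p_j(L,M) \cap N\bigr) \;=\; \Bigl(\bigcup_{j \in \Gamma} p_j(L,M)\Bigr) \cap N \;=\; p(L,M) \cap N,
\]
where the second equality uses the hereditariness of each $p_j$, and the third uses that intersection with the fixed submodule $N$ commutes with arbitrary unions of subsets of $M$.

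There isn't really a hard step here; the only subtlety is making sure one is using the union form (not a general sum) of the join, which is why directedness of $\Gamma$ is essential. I would close by pointing back to the earlier example with $R = k[x,y,z,w]/(xz,yw)$ to contrast with the nondirected case, making explicit that the distributivity $\bigcup_j (A_j \cap N) = \bigl(\bigcup_j A_j\bigr) \cap N$ used above has no counterpart for sums, which is exactly the obstruction to hereditariness being preserved under arbitrary joins.
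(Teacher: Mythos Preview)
Your proof is correct and is essentially identical to the paper's own argument: the paper also writes the single chain of equalities $p(L,N) = \bigcup_j p_j(L,N) = \bigcup_j (p_j(L,M)\cap N) = \bigl(\bigcup_j p_j(L,M)\bigr)\cap N = p(L,M)\cap N$. Your added remarks about directedness and the contrast with the nondirected counterexample are useful context but not part of the paper's proof proper.
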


\begin{proof}
  %
     Let $L \subseteq N \subseteq M$. 
 Then \[
 p(L,N) = \bigcup_{j \in \Gamma} p_j(L,N) = \bigcup_{j \in \Gamma} \left(p_j(L,M) \cap N\right) = \Big(\bigcup_{j \in \Gamma} p_j(L,M)\Big) \cap N = p(L,M) \cap N.  \qedhere
 \] 
\end{proof}




Next, we interface with our notion of duality. 


\begin{prop}\label{pr:limduals}
    Let $R$ and $\cP$ be as in Remark~\ref{rem:matlisdualizable}. 
 Let $\Sigma$ be an indexed set and let $\{p_j \mid j \in \Sigma\}$ be pair operations on $\cP$.  Then \[\displaystyle \Big(\bigvee_{j \in \Sigma} p_j\Big)^\dual = \displaystyle \bigwedge_{j \in \Sigma} (p_j^\dual),\] and \[\displaystyle\Big(\bigwedge_{j \in \Sigma} p_j\Big)^\dual=\bigvee_{j \in \Sigma} (p_j^\dual).\]
\end{prop}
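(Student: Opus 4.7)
The plan is to reduce both identities to the kernel characterization of the smile dual given in Proposition~\ref{pr:kernelview}, which converts statements about the dual into statements about annihilation of linear functionals. The key mechanical fact is that a linear functional $x$ annihilates a sum of submodules of $M^\vee$ if and only if it annihilates each summand; dually, $x$ annihilates a submodule of $M^\vee$ generated by an intersection of families exactly when it annihilates each family. This is the same mechanism that makes $\ann$ interchange sums and intersections, and it is essentially all that is needed.

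First I would prove the identity $\left(\bigvee_{j \in \Sigma} p_j\right)^\dual = \bigwedge_{j \in \Sigma} p_j^\dual$. Fix a pair $(L,M) \in \cP$. By the definition of the join and Proposition~\ref{pr:kernelview}, an element $x \in M$ lies in $\left(\bigvee_j p_j\right)^\dual(L,M)$ if and only if $g(x) = 0$ for every
\[
g \in \left(\bigvee_j p_j\right)((M/L)^\vee, M^\vee) = \sum_{j \in \Sigma} p_j((M/L)^\vee, M^\vee).
\]
Every such $g$ is a finite sum $g = g_{j_1} + \cdots + g_{j_n}$ with $g_{j_i} \in p_{j_i}((M/L)^\vee, M^\vee)$, and $x$ annihilates every such sum if and only if $x$ annihilates each individual $g_{j_i}$ (taking the other summands to be zero). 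Hence $x$ annihilates every element of the sum precisely when, for each $j \in \Sigma$, $x$ annihilates every element of $p_j((M/L)^\vee, M^\vee)$. By Proposition~\ref{pr:kernelview} applied to each $p_j$ separately, this is equivalent to $x \in p_j^\dual(L,M)$ for all $j$, i.e., $x \in \bigwedge_j p_j^\dual(L,M)$.

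For the second identity, rather than repeat the argument, I would bootstrap off the first using Lemma~\ref{lem:doubledual}. Applying the first identity to the indexed family $\{p_j^\dual\}_{j \in \Sigma}$ gives
\[
\left(\bigvee_{j \in \Sigma} p_j^\dual\right)^\dual = \bigwedge_{j \in \Sigma} (p_j^\dual)^\dual = \bigwedge_{j \in \Sigma} p_j,
\]
where the second equality uses $p_j^{\dual \dual} = p_j$. Taking smile duals of both sides and applying Lemma~\ref{lem:doubledual} once more yields
\[
\bigvee_{j \in \Sigma} p_j^\dual = \left(\bigwedge_{j \in \Sigma} p_j\right)^\dual,
\]
as desired. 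There is no serious obstacle here: the only subtlety is making sure the kernel criterion from Proposition~\ref{pr:kernelview} is applied correctly to finite sums, and confirming that the family $\{p_j^\dual\}$ lies in the appropriate dualized class $\cP^\vee$ so that Lemma~\ref{lem:doubledual} applies termwise.
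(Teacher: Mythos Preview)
Your proof is correct and follows essentially the same approach as the paper: the first identity is proved by applying the kernel criterion of Proposition~\ref{pr:kernelview} and using that $x$ annihilates a finite sum of functionals if and only if it annihilates each summand, and the second identity is deduced from the first by replacing each $p_j$ with $p_j^\dual$ and invoking Lemma~\ref{lem:doubledual}. The paper's write-up separates the two inclusions in the first identity a bit more explicitly, but the argument is the same.
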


\begin{proof}
     The proof is similar to \cite[Propositions  7.7, and 7.8]{nmeRG-cidual} but we include the proof for completeness as the joins here are sums and not unions. 
    
Let $z\in \Big(\bigvee\limits_{j \in \Sigma} p_j\Big)^\dual (L,M)
$. By Proposition \ref{pr:kernelview}, we have $g(z)=0$ for all $g \in \Big(\bigvee\limits_{j \in \Sigma}p_j\Big)((M/L)^\vee, M^\vee) = \sum\limits_{j \in \Sigma} p_j((M/L)^\vee,M^\vee) $
.  In particular, for all $j \in \Sigma$ and  $g \in p_j((M/L)^\vee,M^\vee) $, $g(z)=0$.  Hence, $z \in \bigwedge\limits_{j \in \Sigma} p_j^\dual(L,M)$.
    
    Conversely, let $z\in \bigwedge\limits_{j \in \Sigma} (p_j^\dual(L,M))=\bigcap\limits_{j\in \Sigma} (p_j^\dual(L,M))$.  Then for all $j \in \Sigma$ and all $g \in p_j((M/L)^\vee, M^\vee)$, we have $g(z) = 0$ by Proposition \ref{pr:kernelview}.
Now let $h \in (\bigvee_j  p_j)((M/L)^\vee, M^\vee)$.  Then $h = h_1 + \cdots + h_n$, where each $h_i \in p_{j_i}((M/L)^\vee, M^\vee)$ for some $j_1,\ldots, j_n \in \Sigma$.  In particular, each $h_i(z) = 0$, so $h(z)=0$.  Since $h$ was arbitrary, $z\in \Big(\bigvee\limits_{j\in \Sigma} p_j\Big)^\dual (L,M)$.

Now if we set $p_j=q_j^\dual$ in the expressions above, we obtain, $\Big(\bigvee\limits_{j \in \Sigma} q_j^\dual\Big)^\dual = \displaystyle \bigwedge_{j \in \Sigma} (q_j^\dual)^\dual$ and after applying duals on both sides and using Lemma \ref{lem:doubledual}, we obtain $\bigvee\limits_{j \in \Sigma} q_j^\dual=\Big(\displaystyle \bigwedge_{j \in \Sigma} q_j\Big)^\dual$ which gives the second expression.
\end{proof}

Although the cohereditary property is not preserved by inverse limits for a general ring, by our duality we obtain:

\begin{cor}
    Let $(R,\m)$ a complete Noetherian local ring. Let $\Omega$ be an inverse poset.  Let $\{p'_j \mid j \in \Omega\}$ be pair operations defined on Matlis dualizable $R$-modules such that $p'_i \leq p'_j$ whenever $i \leq j$.  Let $p'=\displaystyle\varprojlim_{j \in \Omega} p'_j$.  If all of the $p'_j$ are cohereditary, then so is $p'$.
\end{cor}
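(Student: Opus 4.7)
The plan is to reduce the corollary to its smile dual counterpart, which is already settled by Proposition~\ref{pr:heredlim}, and then transport the result back via Lemma~\ref{lem:doubledual}.

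First, I will note that the smile dual reverses the order on pair operations: if $p \leq q$ then $q^\dual \leq p^\dual$. This can be read off from Proposition~\ref{pr:kernelview}, since $p((M/L)^\vee, M^\vee) \subseteq q((M/L)^\vee, M^\vee)$ imposes a stronger annihilation condition for membership in $q^\dual$ than for $p^\dual$. Consequently, passing to duals turns the inverse poset $\Omega$ into a directed poset $\Omega^\vee$: if $i \preccurlyeq j$ in $\Omega^\vee$ (i.e. $j \leq i$ in $\Omega$), then $p'_j \leq p'_i$, and thus $(p'_i)^\dual \leq (p'_j)^\dual$.

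Next, since each $p'_j$ is cohereditary, the duality correspondences in Table~\ref{tab:dual} give that each $(p'_j)^\dual$ is hereditary. By Proposition~\ref{pr:limduals} applied to $p' = \bigwedge_{j \in \Omega} p'_j$, we have
\[
(p')^\dual = \bigvee_{j \in \Omega} (p'_j)^\dual = \varinjlim_{j \in \Omega^\vee} (p'_j)^\dual,
\]
where the second equality uses the directed structure established above. Proposition~\ref{pr:heredlim} then applies directly to conclude that $(p')^\dual$ is hereditary.

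Finally, I invoke the duality correspondence once more: since $(p')^\dual$ is hereditary, its smile dual is cohereditary, and by Lemma~\ref{lem:doubledual} this smile dual is $((p')^\dual)^\dual = p'$ itself. The main obstacle I anticipate is a bookkeeping one, namely carefully tracking the order-reversal under $(-)^\dual$ so that an inverse system of cohereditary operations really does become a directed system of hereditary operations compatible with the hypothesis of Proposition~\ref{pr:heredlim}; once that is in place, the argument is essentially a two-step dualization sandwiching the already-proved hereditary case.
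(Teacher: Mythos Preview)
Your proposal is correct and follows essentially the same route as the paper's proof: dualize each $p'_j$ to get hereditary operations, identify $(p')^\dual$ with the direct limit $\varinjlim_{j \in \Omega^\vee} (p'_j)^\dual$ via Proposition~\ref{pr:limduals}, apply Proposition~\ref{pr:heredlim}, and then dualize back using Lemma~\ref{lem:doubledual} and Table~\ref{tab:dual}. You are in fact slightly more careful than the paper in explicitly verifying the order-reversal under $(-)^\dual$ needed to check that $\{(p'_j)^\dual\}_{j \in \Omega^\vee}$ is a directed system satisfying the hypotheses of Proposition~\ref{pr:heredlim}.
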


\begin{proof}
  Note that for all $j$, $(p'_j)^\dual$ are hereditary (refer to Table \ref{tab:dual}). By Proposition \ref{pr:heredlim}, $\displaystyle\varinjlim_{j \in \Omega^\vee} (p'_j)^\dual$ is hereditary.  By Proposition \ref{pr:limduals}, \[\displaystyle\varinjlim_{j \in \Omega^\vee} (p'_j)^\dual=\bigvee_{j \in \Omega^\vee} (p'_j)^\dual=\Big(\bigwedge_{j \in \Omega} p'_j\Big)^\dual=\bigg(\displaystyle\varprojlim_{j \in \Omega} p'_j\bigg)^\dual=(p')^\dual.\] Again we make use of Table \ref{tab:dual} and Lemma \ref{lem:doubledual} to see that $p'=((p')^\dual)^\dual$ is cohereditary. 
\end{proof}

The following is an example of an inverse limit of interior operations that is not itself an interior operation.

\begin{example}
Let $k$ be a field and $R=k[[x,y]]/(xy)$.  Note that the nonzero and non-unit ideals have the form 
\begin{align*}
    &(x^n,y^m) \text{ for } n,m \geq 1, \\
    &(x^n+ay^m) \text{ for } n,m \geq 1 \text{ and } a \in k^\times,\\
    &(x^n) \text{ for } n \geq 1\\
    &(y^m) \text{ for } m \geq 1.
\end{align*}

Define the interior operations $i_j$ on the ideals of $R$ as follows:

\[I_{i_j}^R=\begin{cases}
    I \text{ if } I=R \text{ or } I=(x^n,y^m) \text{ for } m \geq j \text{ or } I=(x^n+ay^m) \text{ for } m \geq j \text{ and } a \in k^\times\\
    (x^n,y^j) \text{ if } I=(x^n,y^m) \text{ for } 1 \leq m \leq j  \text{ or }  I=(x^{n-1}+ay^{m-1}) \text{ for }  2 \leq n, 2 \leq  m \leq j \text{ and } a \in k^\times\\
    0 \text{ if } I=0, I=(x^n) \text{ for } n\geq 1 \text{ and } I=(y^m) \text{ for } m\geq 1
\end{cases}\]

Note that $i_j \geq i_\ell$ for $j \leq \ell$ since $(x^n,y^j) \supseteq (x^n,y^{\ell})$ if $j \leq \ell$.
Set $i=\displaystyle \varprojlim_{j \in \mathbb{N}} i_j$.

Note that $(x,y)^R_i=(x)$, but $(x)^R_i=0$.  Thus $i$ is not idempotent, thus not an interior operation.

\end{example}


\section{Applications to known pair operations}
This section serves to connect the above theory with pair operations that appear in the literature, especially some common closure and interior operations.

\subsection{A table of operations and their properties}\label{sec:bigtable}

In Table~\ref{ta:exprops}, we link various pair operations to properties they may or may not have.  This should help the reader's intuition both for the operations and the properties.  Some of the operations in the table are not defined in this paper.  However, for all such operations, the references in the table will point the reader to good places to learn about them.

\begin{rem}\label{rem:taprops}
Table~\ref{ta:exprops} is largely self-explanatory; however, some clarifying comments are in order. \begin{itemize}
    \item The `scope' column means, ``if the operation is taken with the given scope, then the properties it does or does not satisfy are as given in this row of the table".  Some operations on the table have bigger scope as defined in the literature.  For instance, tight closure is defined over arbitrary Noetherian rings containing a field \cite{HHmain}, but if there is no weak test element, it is not clear that the operation is idempotent on infinitely generated modules.  The scopes given are meant to encapsulate the situations where most researchers would use or care about the operations in question.
    \item We have tried to strike a balance between completeness and not cluttering the table overly much.  Thus, no citation is given for items that are easily determined from the definition.  For example, it is easy to see from the definition of (subset) module closures that they are residual.
    \item By the same token, some implications hold automatically.  For instance, no nontrivial extensive operation can be absolute or intensive, and no nontrivial intensive operation can be residual or extensive. Moreover, by Proposition~\ref{pr:func}, any pair operation that is \opsub\ is functorial if and only if it is cofunctorial.
    \item Also, some of the operations in the list are special cases of other operations. For example, Frobenius and plus closures are special cases of module closures; thus anything that holds for all module closures holds in particular for Frobenius and plus closures.  Similarly, any module closure is a subset module closure.
    \item Persistence holds for tight closure when $R \ra S$ are in the given scope \emph{and} $R$ is either F-finite or essentially of finite type over an excellent local ring \cite{HHbase}.
    \item Persistence for $J$-basically full closure means: if $L \subseteq M$ are $R$-modules, $J$ an ideal of $R$, and $R \ra S$ a ring map, then if $x\in \Jcol J L M$, then $x \otimes 1 \in M \otimes_R S$ is in $\Jcol {{(JS)}} {(LS)} {M \otimes_R S}$.  Thus, though basically full closure over a local ring $(R,\m)$ is the same as $\m$-basically full closure, \emph{persistence} works differently, as in a local homomorphism $(R,\m) \ra (S,\n)$, typically $\m S \neq \n$.
    \item Persistence along a ring map $R \ra S$ for a subset module closure $\cl_{X,L}$ (resp. subset module trace $\tr_{X,L}$) means that it persists to $\cl_{XS, L \otimes_R S}$ (resp. $\tr_{XS, L\otimes_R S}$), where $XS = \{ x\otimes s \mid x\in X,\ s \in S\}$.
    \item Persistence for $\ia$-saturation means that $(L :_M \ia^\infty)S \subseteq (LS :_{M \otimes S} (\ia S)^\infty)$.
    \item When asking whether a closure (resp. interior) operation is Nakayama, we are implicitly restricting the scope of the modules in question to be Noetherian (resp. Artinian) and the rings $(R,\m)$ to be Noetherian local.  Moreover, plus closure is only known to be Nakayama when the ring is complete, and module closures $\cl_L$ are known to be Nakayama only when the module $L$ is finitely generated or is a local $R$-algebra whose maximal ideal contracts to $\m$, and certain  other similar cases.  We only know module trace $\tr_L$ to be a Nakayama interior when $L$ is finitely generated.  Moreover, there is a version of the Nakayama property for certain kinds of extensive idempotent operations that aren't necessarily closures; the Ratliff-Rush operation is an example.  Hence all the asterisks in the Nakayama column.
\end{itemize}
\end{rem}

{
\begin{table}
\centering
\resizebox{\columnwidth}{!}{%
\begin{NiceTabular}{c||c|c|c|c|c|c|c|c|c|c|c|c|c|c|c|}
\ & Scope & {\rotate \Block{}{order-preserving\\(on submodules)}} & {\rotate idempotent} & {\rotate extensive} & {\rotate closure operation}& {\rotate intensive} & {\rotate interior operation} & {\rotate residual} & {\rotate absolute}& {\rotate Nakayama$^*$} & {\rotate hereditary}& {\rotate cohereditary} & {\rotate functorial} & {\rotate cofunctorial} & {\rotate persistent$^*$}\\
\hline \hline
identity
 & modules & \cmark & \cmark & \cmark & \cmark & \cmark & \cmark &  \cmark & \cmark & \cmark & \cmark & \cmark & \cmark & \cmark & \cmark \\
\hline
indiscrete closure
 & modules & \cmark & \cmark & \cmark & \cmark & \xmark & \xmark & \cmark & \xmark & \cmark & \cmark & \cmark & \cmark & \cmark & \cmark \\
\hline
 zero 
 & modules & \cmark & \cmark & \xmark & \xmark & \cmark & \cmark & \xmark & \cmark & \cmark & \cmark & \cmark & \cmark & \cmark & \cmark \\
\hline
radical & single ideals & \cmark & \cmark & \cmark & \Block{}{\cmark\\ [\ref{guide}]2.1.2} & \xmark & \xmark & - & - & \Block{}{\xmark\\ [\ref{guide}]5.1} & - & - & \Block{}{\cmark \\ \ref{pr:radfunctcofunct}} & \Block{}{\cmark \\ \ref{pr:radfunctcofunct}} & \Block{}{\cmark \\ \ref{ex:persprops}} \\
\hline
\Block{}{tight closure
} &\Block{}{modules over prime \\ characteristic  Noetherian  \\rings admitting \\ big weak test elements}& \cmark & \cmark & \cmark & \Block{}{\cmark\\ [\ref{tcop}]8.4} & \xmark & \xmark & \cmark & \xmark & \Block{}{\cmark\\ [\ref{phandep}]3.2} & \Block{}{\xmark \\ \ref{ex:tcnother}} &\cmark & \Block{}{\cmark\\ [\ref{tcop}]5.3} & \cmark & \Block{}{\cmark$^*$ \\ [\ref{tcop}]8.6} \\
\hline
$\ia$-tight closure &\Block{}{modules over prime \\ characteristic Noetherian \\ rings admitting \\ $\ia$-test elements} & \Block{}{\cmark \\ [\ref{HaYo}]1.3} & \Block{}{\xmark\\ [\ref{HaYo}]1.4} & \Block{}{\cmark \\ [\ref{HaYo}]1.3} &\xmark &\xmark &\xmark & \Block{}{\cmark \\ [\ref{HaYo}]1.3} &\xmark &- &\xmark&\cmark &\Block{}{\cmark \\ \ref{pr:atcfunc}}  &\cmark &?\\ 
\hline
Frobenius closure & \Block{}{modules over\\prime characteristic \\ Noetherian  rings}& \cmark & \cmark & \cmark & \Block{}{\cmark \\ [\ref{tcop}]3.5} & \xmark & \xmark & \cmark & \xmark & \Block{}{\cmark\\ [\ref{chdual}]4.8} & \Block{}{\xmark \\ \ref{ex:tcnother}} & \cmark & \Block{}{\cmark\\ [\ref{tcop}]3.5} & \cmark & \Block{}{\cmark \\ \ref{ex:persprops}} \\
\hline
plus closure & \Block{}{modules over  domains\\  or over 
Noetherian
rings} & \cmark & \cmark & \cmark & \cmark & \xmark & \xmark & \cmark & \xmark & \Block{}{\cmark\\ [\ref{chdual}]4.8*} & \Block{}{\xmark \\ \ref{ex:tcnother}} & \cmark & \cmark & \cmark & \Block{}{\cmark\\ [\ref{guide}]4.3} \\
\hline
module closures & modules & \cmark & \cmark & \cmark & \Block{}{\cmark \\ [\ref{guide}]7.0.6} & \xmark & \xmark & \cmark & \xmark & \Block{}{\cmark\\ [\ref{chdual}]4.1,7*} & \xmark & \cmark & \cmark & \cmark & \cmark \\
\hline
\Block{}{subset module\\ closure $\cl_{X,L}$} & modules & \cmark & \Block{}{\xmark\\ \ref{rem:smcnotidem}} & \cmark & \xmark & \xmark & \xmark & \Block{}{\cmark \\ \ref{pr:smc}} & \xmark & - & \xmark & \cmark & \Block{}{\cmark \\ \ref{pr:smc}} & \cmark & \Block{}{\cmark \\ \ref{pr:smc}} \\
\hline
\Block{}{socle $(L :_M \m)$} & \Block{}{modules over \\ local rings $(R,\m)$} &\cmark &\xmark &\cmark &\xmark &\xmark &\xmark &\cmark & \xmark &- &\cmark &\cmark &\cmark & \cmark &\Block{}{\xmark \\ \ref{ex:persprops}} \\
\hline
\Block{}{$\ia$-saturation \\ $(L :_M \ia^\infty)$} & \Block{}{modules over \\ Noetherian  rings} & \cmark & \Block{}{\cmark\\ [\ref{lcbook}]2.1.2}  & \cmark & \cmark & \xmark & \xmark & \cmark & \xmark & \xmark & \Block{}{\cmark \\ [\ref{lcbook}]1.1.6} & 
\cmark & \Block{}{\cmark \\ [\ref{lcbook}]1.1.1} & \cmark & \Block{}{\cmark \\ [\ref{lcbook}]4.2.1} \\
\hline
\Block{}{integral closure\\ (EHU) \cite{EHU-Ralg}} & \Block{}{finitely generated modules \\ over Noetherian rings} & \cmark & \cmark & \cmark & \cmark & \xmark & \xmark & \xmark & \xmark & \Block{}{\xmark\\ [\ref{lint}]rmk\\ p.9} & \Block{}{\cmark\\ [\ref{howext}]9.6} & \xmark & \Block{}{\cmark\\ [\ref{howext}]9.6} & \cmark & \cmark \\
\hline
\Block{}{integral closure \\ (Rees) \cite{Rees-redmod}} & \Block{}{modules over \\Noetherian rings} & \cmark & \cmark & \cmark & \cmark & \xmark & \xmark & \xmark & \xmark & \Block{}{\xmark\\ [\ref{lint}]rmk\\ p.9}& \Block{}{\xmark\\ [\ref{howext}]9.8} & \xmark& \Block{}{\cmark \\ [\ref{howext}]2.12} & \cmark & \cmark \\
\hline
\Block{}{integral closure \\ (liftable) \cite{nmeUlr-lint}} & modules & \cmark & \cmark & \cmark & \Block{}{\cmark \\ [\ref{lint}]2.4} & \xmark & \xmark & \cmark & \xmark & ? & \Block{}{\xmark \\ \ref{ex:licnonher}} & \cmark & \Block{}{\cmark \\ [\ref{lint}]2.4} & \cmark & \Block{}{\cmark \\ [\ref{lint}]2.4}\\
\hline
\Block{}{$J$-basically \\ full closure}& modules &  \cmark & \cmark & \cmark & \Block{}{\cmark\\ [\ref{nonres}]4.7} & \xmark & \xmark & \Block{}{\xmark
} & \Block{}{\xmark
} & \Block{}{\cmark \\ [\ref{nonres}]4.7} & \Block{}{\cmark \\ [\ref{howext}]7.3} & 
\xmark
& \Block{}{\cmark\\ [\ref{nonres}]4.7} & \cmark & \Block{}{\cmark \\ \ref{ex:persprops}} \\
\hline
\Block{}{basically full\\ closure \\ \cite{HRR-bf}} & \Block{}{pairs $(L,M)$ of \\
finite modules over \\ Noetherian local rings\\ with $M/L$ artinian} & \cmark & \cmark & \cmark & \Block{}{\cmark \\ [\ref{bf}]4.2
} & \xmark & \xmark & \Block{}{\xmark \\ [\ref{nonres}]4.8} & \xmark & \cmark & \cmark & \xmark & \cmark & \cmark & \Block{}{\xmark \\ [\ref{guide}]4.3} \\
\hline
\Block{}{tight interior\\ \cite{nmeSc-tint}} & \Block{}{modules over F-finite \\ Noetherian  rings} & \cmark & \Block{}{\cmark \\ [\ref{tint}]2.7} & \xmark & \xmark  & \cmark & \cmark & \xmark & \cmark & ? & \cmark & \Block{}{\xmark \\ \ref{ex:tcnother}} & \Block{}{\cmark \\ [\ref{tint}]2.1}  & \cmark & \Block{}{\xmark \\ \ref{ex:persprops}} \\ 
\hline
trace $\tr_L$ & modules & \cmark & \cmark  & \xmark & \xmark  & \cmark & \Block{}{\cmark\\ [\ref{cid}]8.4} & \xmark & \cmark & \Block{}{\cmark \\ \ref{pr:trNak}$^*$} & \cmark & \Block{}{\xmark \\ \ref{ex:tcnother}} & \Block{}{\cmark\\ [\ref{cid}]8.4} & \cmark & \cmark \\ 
\hline
\Block{}{subset module\\ trace $\tr_{X,L}$}& modules & \cmark & \Block{}{\xmark\\ [\ref{cid}]8.5} & 
\xmark & \xmark  & \cmark & \xmark & \xmark & \cmark & - & \cmark & \xmark & \Block{}{\cmark\\ [\ref{cid}]8.4} & \cmark & \Block{}{\cmark\\ [\ref{cid}]8.12} \\ 
\hline
\Block{}{$J$-basically \\ empty interior} & modules & \cmark & \cmark & \xmark & \xmark  & \cmark & \Block{}{\cmark\\ [\ref{nonres}]4.10} & \Block{}{ \xmark \\ \ref{ex:Jbenotabs}} & \Block{}{\xmark \\ [\ref{howext}]7.15
} & \Block{}{\cmark\\ [\ref{nonres}]4.10} & \Block{}{\xmark\\ [\ref{howext}]7.15
} & \Block{}{\cmark\\ [\ref{howext}]7.12} & \Block{}{\cmark\\ [\ref{nonres}]4.10} & \cmark & \Block{}{\cmark \\ \ref{ex:persprops}}\\
\hline
\Block{}{Ratliff-Rush\\ operation \cite{RatRu-rr}} & \Block{}{nested pairs of ideals \\
in integral domains} & \Block{}{\xmark\\ [\ref{coblo}]1.1} & \cmark & \cmark &\xmark & \xmark & \xmark &- & \xmark & \Block{}{\cmark \\ { [\ref{RRop}]3.2.3$^*$}} & \cmark &- &\Block{}{\cmark \\ [\ref{howext}]3.8}&\Block{}{\xmark \\ [\ref{nonres}]2.6} &\cmark \\
\hline
core (of ideals) & single ideals & \Block{}{\xmark \\ [\ref{Lcore}]1} & \Block{}{\xmark \\ \ref{ex:corenonid}} & \xmark & \xmark & \cmark & \xmark & - & \cmark & - & \cmark & - & \Block{}{\xmark \\ \ref{ex:corenotfunc}} & \Block{}{\xmark\\ \ref{ex:corenotcof}} &\Block{}{\xmark \\ [\ref{HTcore}]2.4}\\
\hline
\end{NiceTabular}%
}
\caption{A grid of pair operations and properties.  \\
In this table, \cmark\ means the property holds for the given operation, \xmark\ means there are counterexamples, - means the property does not apply to the operation, and ? means we do not know whether the property always holds for this operation given its scope.
\\
Below some \cmark\ and \xmark\ signs are references that indicate a property does or does not hold. Reference numbers: 
[\nextref{lcbook}\ref{lcbook}] is \cite{BrSh-locobook2};
[\nextref{phandep}\ref{phandep}] is \cite{nmepdep}; 
[\nextref{guide}\ref{guide}] is \cite{nme-guide2}; 
[\nextref{tcop}\ref{tcop}] is \cite{nme-itcop}; [\nextref{cid}\ref{cid}] is \cite{nmeRG-cidual}; [\nextref{nonres}\ref{nonres}] is  \cite{ERGV-nonres};
[\nextref{chdual}\ref{chdual}] is \cite{ERGV-chdual}; [\nextref{howext}\ref{howext}] is \cite{ERGV-extend}; 
[\nextref{tint}\ref{tint}] is \cite{nmeSc-tint}; 
[\nextref{lint}\ref{lint}] is \cite{nmeUlr-lint}; 
[\nextref{coblo}\ref{coblo}] is \cite{HJLS-coeff}; [\nextref{bf}\ref{bf}] is \cite{HRR-bf};
[\nextref{HTcore}\ref{HTcore}] is \cite{HT-core}; 
[\nextref{HaYo}\ref{HaYo}] is \cite{HaYo-atc}; 
[\nextref{Lcore}\ref{Lcore}] is \cite{Lee-core}; [\nextref{RRop}\ref{RRop}] is \cite{RRAsymprime}.
}
\label{ta:exprops}
\end{table}
}

%
%


We include two propositions and several examples below to justify some of the assertions in Table \ref{ta:exprops}.

\begin{prop} \label{pr:radfunctcofunct}
    Radical is 
    fully functorial on ideals.  
\end{prop}
\begin{proof}

    Since radical is order-preserving, by 
    Proposition~\ref{pr:func} we need only show cofunctoriality. 
    By Remark \ref{rem:funcforideals}, we need to show that $\sqrt{(I:_Rx)} \subseteq \sqrt{I}:_Rx$. Suppose that $y \in \sqrt{(I:_Rx)}$.  Then there exists $n \in \mathbb{N}$ so that $y^n \in I:_Rx$ or $xy^n \in I$.  As $I$ is an ideal, $x^ny^n=x^{n-1}xy^n \in I$ implying that $xy \in \sqrt{I}$ or $y \in \sqrt{I}:_Rx$ which is what we needed to show.



\end{proof}
\begin{prop}\label{pr:atcfunc}
The $\ia$-tight closure operation is fully functorial. \end{prop}

\begin{proof} Let $R$ be a commutative ring of prime characteristic $p>0$, let $\ia$ be an ideal, let $L \subseteq M$ and $L' \subseteq M'$ be nested pairs of $R$-modules, let $g: M \ra M'$ be an $R$-linear map such that $g(L) \subseteq L'$, and let $z\in L^{*\ia}_M$.  Then by definition, there is some $c\in R^\circ$ and some $q_0$ a power of $p$ such that for all powers $q\geq q_0$ of $p$, we have $c \ia^{q} z^q_M \subseteq L^{[q]}_M$. 
But then $c\ia^{q} g(z)^{[q]}_{M'} \subseteq g(L)^{[q]}_{M'} \subseteq (L')^{[q]}_{M'}$ for all $q\geq q_0$, whence $g(z) \in (L')^{*\ia}_{M'}$.
\end{proof}

\begin{example} \label{ex:corenotcof}


Core is not cofunctorial.  Let $R=k[x,y]_{(x,y)} / (xy, y^2)$.  Note that $(x+ay)$ is a reduction of $(x,y)$ for any $a\in k$, since $(x+ay)(x,y) = (x^2) = (x,y)^2$.  Moreover, for any $a\neq b$ in $k$, we have $(x+ay) \cap (x+by) = (x^2)$, so that core$(x,y) \subseteq (x^2)$.  We have $((x,y) :x)=R$ since $x\in (x,y)$, so that core$((x,y):x) = R$ as well; the unit ideal is always basic.  But core$(x,y) : x \subseteq (x^2) :x =(x,y)$.  Thus, core$((x,y):x) \nsubseteq ($core$(x,y)) :x$, so that by Remark~\ref{rem:funcforideals}, core is not cofunctorial. 
\end{example}

\begin{example}\label{ex:corenotfunc}
Similarly, core is not functorial. Let $R=k[x,y,z]_{(x,y,z)} / (xyz^2, y^2 z^2)$. Let $I=(x,y)$ and $J=(x)$.  For any $n\in \N$, $I^n$ is minimally generated by the $n+1$ monomials $x^i y^{n-i}$, $0 \leq i \leq n$.  Indeed, if any were redundant, then since all relations in $R$ are monomial with respect to the ambient localized polynomial ring, such a redundant monomial $x^i y^{n-i}$ would have to be zero, but none are.  Then by \cite[Theorem 2, p. 151]{NR} and Nakayama's lemma, $I$ has no proper minimal reductions, so that core$(I)=I$.  However, $(zJ)(zI) = (x^2 z^2) = (zI)^2$, so that since $zJ \subseteq zI$, we have that $zJ$ is a reduction of $zI$.  In particular, core$(zI) \subseteq zJ = (xz)$.  But $yz \notin (xz)$, so that  $yz \in z$core$(I) \setminus $core$(zI)$.  Another appeal to Remark~\ref{rem:funcforideals} completes the proof. 
\end{example}

\begin{example}\label{ex:Jbenotabs}
%
The operation $J{\rm be}$ is not residual.
Let $R=k[\![x,y]\!]$, $L=\m^3$, $J=M=N=\m^2$ and let $\pi:M \rightarrow M/L$ be the canonical epimorphism.  Note that $N^M_{J{\rm be}}=\m^2(\m^2:_{\m^2}\m^2)=\m^4$.  However $$\pi^{-1}((N/L)^{M/L}_{J{\rm be}})=\pi^{-1}(\m^2(\m^2/\m^3:_{\m^2/\m^3} \m^2))=\pi^{-1}(\m^2(\m^2/\m^3))=\pi^{-1}(0)=\m^3 \neq \m^4.$$

\end{example}

\begin{example}\label{ex:licnonher}
    Liftable integral closure is not hereditary.  Let $(R,\m)$ be a Noetherian local domain and $J \subsetneq I$ be distinct ideals of $R$ such that $I$ is the integral closure of $J$ (e.g. $R=k[\![x,y]\!]$, $J=(x^2, y^2)$, $I = (x,y)^2$).  Then by \cite[Proposition 2.4(6)]{nmeUlr-lint}, $\lic J R = I$, so that $\lic J R \cap I = I$, but by \cite[Proposition 2.4(11)]{nmeUlr-lint}, $\lic JI \subseteq J + \m I$, which by Nakayama's lemma cannot equal $I$.  Thus, $\lic JI \neq \lic JR \cap I$. 
\end{example}

\begin{example}\label{ex:corenonid}
    Core is not idempotent. Let $S$ be a normal Cohen-Macaulay standard $\N$-graded domain over a field $k$ of characteristic zero, with $d=\dim S\geq 2$. Let $\m$ be its homogeneous maximal ideal.  Then by \cite[Corollary 6.4.4]{HySmi-Kawcore}, for all $n>0$, core$(\m^n) = \m^{nd + a+1}$, where $a$ is the a-invariant of $S$.  Then for any $n\geq -\frac a{d-1}$, we have $nd+a+1>n$, and hence $(nd+a+1)d+a+1 > nd+a+1$ as well.  Thus, $\cre(\cre(\m^n)) = \cre(\m^{nd+a+1}) = \m^{(nd+a+1)d+a+1} \neq \m^{nd+a+1} = \cre(\m^n)$.
\end{example}

\begin{example}\label{ex:tcnother}
    None of tight, Frobenius, nor plus closure are hereditary.  Let $(R,\m,k)$ be a prime characteristic complete F-finite non-F-pure Noetherian local domain, and let $I$ be an $\m$-primary ideal that is not Frobenius closed. Such an ideal exists because if $J$ is any ideal that isn't Frobenius closed, then there is some $n$ such that $J+\m^n$ is not Frobenius closed, since $J$ is the intersection of all the $J+\m^n$.  
    Let $u \in I^{\rm F}$ such that $(I :u) = \m$.  Let $J=(I,u)$.  Then $J/I \cong R/(I:u) = R/\m$, so by residuality, $I^*_J/I \cong \m^*/\m =\m/\m=0$.  Therefore, $I \subseteq I^{\rm F}_J \subseteq I^+_J \subseteq I^*_J = I$, making all terms equal.  But $u\in I^{\rm F}_R$, whence $u \in I^\cl_R$ for $\cl = {\rm F}, +, *$.  Thus, for each $\cl$, we have $u \in (I^\cl_R \cap J) \setminus I^\cl_J$.  For a concrete example, let $k$ be a field of characteristic $p \equiv 2$ (mod $3$), $R = k[\![x,y,z]\!] / (x^3 + y^3+ z^3)$, $I=(x,y)$, and $u=z^2$ \cite[Example 2.2]{Hu-tcparam}

    Then it follows from  \cite[Proposition 3.5]{nmeSc-tint} and smile duality that over any such ring, tight interior is not cohereditary. By the same token, since the smile dual of Frobenius closure of zero is $\tr_{R^\infty}$, it follows that module trace is also not cohereditary in general.
\end{example}

\begin{example}\label{ex:persprops}
We explore here the persistence property for various operations in Table~\ref{ta:exprops}. Let $\phi:R \to S$ be a map of rings.
\begin{enumerate}
    \item Radical is persistent.  If $x\in \sqrt{I}$, then $x^n \in I$ for some $n$, so $\phi(x)^n = \phi(x^n) \in IS$.
    \item Frobenius closure is persistent. If $z\in M$ with $z^q_M \in L^{[q]}_M$, then $(z \otimes 1)^q_{M \otimes_R S} \in (LS)^{[q]}_{M \otimes_R S}$.
    \item\label{it:Jbfpers} $J$-basically full closure is persistent, in the sense of Remark~\ref{rem:taprops}.  Let $z\in \Jcol J LM = (JL :_M J)$, with $L \subseteq M$ an $R$-module inclusion, $J$ an ideal of $R$, and $\phi: R \ra S$ a ring homomorphism.  Then for any $\alpha \in JS$, we have $\alpha =\sum_{i=1}^n j_i s_i$ with $j_i \in J$, $s_i \in S$, so that $j_i z \in JL$ for each $i$.  Write $j_i z = \sum_{\mu=1}^{t_i} h_{i\mu} \ell_{i\mu}$, $h_{i\mu} \in J$, $\ell_{i\mu} \in L$.  Then $\alpha \cdot (z \otimes 1) = \sum_i j_i z \otimes s_i = \sum_i \sum_\mu h_{i\mu} \ell_{i\mu} \otimes s_i = \sum_i \sum_\mu (h_{i\mu} \otimes 1) \cdot (\ell_{i \mu} \otimes s_i) \in (JS)(LS)$.  Since this holds for any $\alpha \in JS$, we have $z \otimes 1 \in ((JS)(LS) :_{M \otimes_RS} (JS)) = \Jcol {{(JS)}} {(LS)} {M \otimes_RS}$.
    \item $J$-basically empty interior is persistent in the sense of Remark~\ref{rem:taprops}.  Let $z\in L_{J\rm{be}}^M = J(L:_MJ)$, with $L \subseteq M$ an $R$-module inclusion, $J$ an ideal of $R$, and $\phi: R \ra S$ a ring homomorphism.  Then $z=\sum_{i=1}^n j_i \lambda_i$, with $j_i \in J$ and $\lambda_i \in (L :_M J)$, and by a similar argument as in (\ref{it:Jbfpers}), we have $\lambda_i \otimes 1 \in (LS :_{M \otimes_R S} JS)$ for each $i$.  Thus, $z\otimes 1 = \sum_i j_i\lambda_i \otimes 1 = \sum_i (j_i \otimes 1)(\lambda_i \otimes 1) \in (JS)(LS :_{M \otimes_R S} JS) = (LS)^{M \otimes_RS}_{(JS)\rm{be}}$.
    \item Tight interior is not persistent. Let $(R,\m)$ be a regular F-finite local ring and $J \subset \m$ a radical ideal of $R$ such that $S=R/J$ is not F-regular (e.g. $R=k[\![x,y,z]\!]$, $J = (x^3 + y^3 + z^3)$, $\chr k \neq 3$).  Then by \cite[Proposition 2.3]{nmeSc-tint}, $R_* = \tau_b(R)=R$ and $S_* = \tau_b(S) \subseteq \m_S$. Thus, $R_* S \nsubseteq S_*$, since $R_* S= RS = S$, whereas $S_* \subseteq \m_S$.
    \item The socle operation on pairs of modules over local rings $(R,\m)$ given by $p(L,M) = (L :_M \m)$, while functorial, is not persistent even across very reasonable ring maps.  Indeed, let $R=k[\![x]\!]/(x^2)$ and $S=R[\![y]\!] = k[\![x,y]\!]/(x^2)$.  Then $p(0,R) = \ann \m_R =xR$, but $p(0,S)= \ann \m_S = 0$, so in the inclusion map $i: R \hookrightarrow S$ we have $i(p(0, R)) \nsubseteq p(0,S)$.
\end{enumerate}
\end{example}

\subsection{Module closures and trace submodules}\label{sec:trace}
This section serves to introduce pair operation generalizations of module closures and trace. We demonstrate how applying our smile duality and limit to these operations allows us to prove theorems connecting trace and test ideals. 


\begin{defn}[{\cite[Definition 2.3]{RG-bCMsing}}]\label{def:mc} For an $R$-module $L$, the \emph{module closure} coming from $L$ is given by \[
N^{\cl_L}_M := \{ u\in M \mid \forall x\in L,\ x \otimes u \in \im (L \otimes N \rightarrow L \otimes M) \},
\]
whenever $M$ is an $R$-module and $N$ a submodule of $M$. If $L$ is an $R$-algebra, we refer to $\cl_L$ as an \textit{algebra closure}, in which case an equivalent characterization is given by 
\[N^{\cl_L}_M = \{ u\in M \mid 1 \otimes u \in \im (L \otimes N \rightarrow L \otimes M) \}.\]
\end{defn}

\begin{example} \label{ex:modclosure}
        Two simple examples of module closures are the trivial closure, which is given by setting $L=R$, and the improper closure, which sends every pair $(N,M)$ to $M$ and is given by setting $L$ to be the zero ideal.
\end{example}


 As we shall see, the following is a useful generalization:

\begin{defn}[cf. {\cite[Definition 8.15]{nmeRG-cidual}} for the associated submodule selector]\label{def:subsetmc}
If $S$ is a subset of an $R$-module $L$, then we set \[
N^{\cl_{S,L}}_M := \{u \in M \mid \forall s \in S,\ s \otimes u \in \im (L \otimes N \ra L \otimes M)\}.
\]
 We call this a \emph{subset module closure} (or \emph{subset algebra closure} if $L$ is an $R$-algebra). 
One important case of this is where $S = \{c\}$ is a singleton in $L$.  In this case, we write $\cl_{c,L}$.  When $L$ is an $R$-algebra this is called a \emph{subset algebra closure}.
\end{defn}

\begin{rem}\label{rem:smcnotidem}
Definition~\ref{def:subsetmc} really is a generalization of \ref{def:mc}, as it is easily seen that $\cl_L = \cl_{L,L}$, and if $L$ is an $R$-algebra we have $\cl_L = \cl_{1, L}$.  However,
 unlike $\cl_L$, $\cl_{S,L}$ is \emph{not} in general a closure operation, in that it may not be idempotent.  For instance, for any subset $S \subseteq R$ and ideal $I$ we have $I^{\cl_{S,R}}_R = (I :_R S)$, which is almost never an idempotent operation on ideals.
\end{rem}

\begin{prop}[See {\cite[Corollary 8.19]{nmeRG-cidual}}]\label{pr:smc}
    Let $L$ be an $R$-module and $S \subseteq L$ a subset. The subset module closure  $\cl_{S,L}$ is functorial, persistent, and residual.
\end{prop}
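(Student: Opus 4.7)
The plan is to verify functoriality directly from naturality of the tensor product, and residuality from its right-exactness. Extensivity and order-preservation on submodules, though not claimed in the statement, are immediate sanity checks: $u \in N$ gives $s \otimes u \in L \otimes N$, and enlarging $N$ enlarges the image in $L \otimes M$.

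For functoriality, I would take $g: M \to M'$ an $R$-linear map in $\cM$, $N \subseteq M$, and $u \in N^{\cl_{S,L}}_M$. For each $s \in S$, choose $t_s \in L \otimes N$ whose image in $L \otimes M$ is $s \otimes u$. Applying $1_L \otimes g$ and using naturality of $\otimes$, the element $s \otimes g(u) = (1_L \otimes g)(s \otimes u)$ equals the image of $(1_L \otimes g|_N)(t_s) \in L \otimes g(N)$ under $L \otimes g(N) \to L \otimes M'$. Hence $g(u) \in (g(N))^{\cl_{S,L}}_{M'}$, as required.

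For residuality, fix $K \subseteq N \subseteq M$, let $\pi: M \onto M/K$, and write $V_K := \im(L \otimes K \to L \otimes M)$ and $V_N := \im(L \otimes N \to L \otimes M)$. Since $K \subseteq N$, one has $V_K \subseteq V_N$, and right-exactness of $L \otimes (-)$ identifies $L \otimes (M/K)$ with $(L \otimes M)/V_K$. Because $L \otimes N \onto L \otimes (N/K)$ is surjective, under this identification the image of $L \otimes (N/K) \to L \otimes (M/K)$ is exactly $V_N/V_K$. Consequently, for $u \in M$, the condition $\pi(u) \in (N/K)^{\cl_{S,L}}_{M/K}$ translates to $s \otimes u + V_K \in V_N/V_K$ for every $s \in S$, which (since $V_K \subseteq V_N$) amounts to $s \otimes u \in V_N$ for every $s \in S$, i.e.\ $u \in N^{\cl_{S,L}}_M$. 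This gives the residual identity $N^{\cl_{S,L}}_M = \pi^{-1}((N/K)^{\cl_{S,L}}_{M/K})$.

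The main thing requiring care is the right-exactness identification in the residual step, namely that the image of $L \otimes (N/K)$ in $L \otimes (M/K) \cong (L \otimes M)/V_K$ is precisely $V_N/V_K$. Once that is in hand, both properties reduce to direct element chases. Note the argument never uses idempotence, which is consistent with the earlier remark that $\cl_{S,L}$ need not itself be a closure operation.
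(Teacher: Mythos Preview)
Your proof is correct and follows essentially the same approach as the paper: functoriality via naturality of the tensor product, and residuality via right-exactness of $L \otimes_R (-)$. The only difference is in the packaging of the residual step: the paper obtains the inclusion $N^{\cl_{S,L}}_M \subseteq \pi^{-1}\big((N/K)^{\cl_{S,L}}_{M/K}\big)$ from functoriality and then proves the reverse inclusion by composing with the further quotient $M/K \onto M/N$, whereas your identification $L \otimes (M/K) \cong (L\otimes M)/V_K$ with $\im\big(L\otimes(N/K)\to L\otimes(M/K)\big) = V_N/V_K$ yields both inclusions simultaneously. This is a cosmetic repackaging rather than a genuinely different route.
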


\begin{proof}
    First we show that $\cl_{S,L}$ is functorial. Let $f:M \to M'$ be an $R$-module homomorphism, $N$ a submodule of $M$, and $u \in N_M^{\cl_{S,L}}$. We will show that $f(u) \in f(N)_{M'}^{\cl_{S,L}}$.
    Since $u \in N_M^{\cl_{S,L}}$, for each $s \in S$, $s \otimes u \in \im(L \otimes N \to L \otimes M)$. Hence $s \otimes u$ can be written as $\sum_i \ell_i \otimes n_i$ for some $\ell_i \in L$ and $n_i \in N$. Applying $1 \otimes f:L \otimes M \to L \otimes M'$, we get 
    \[s \otimes f(u)=\sum_i \ell_i \otimes f(n_i) \in \im(L \otimes f(N) \to L \otimes M').\]
    Since this holds for each $s \in S$, $f(u) \in f(N)_{M'}^{\cl_{S,L}}$.

    Next we show that $\cl_{S,L}$ is residual. Let $P \subseteq N \subseteq M$ and $\pi:M \onto M/P$ be the natural surjection. Since $\cl_{S,L}$ is functorial, $\pi(N_M^{\cl_{S,L}}) \subseteq (N/P)_{M/P}^{\cl_{S,L}}$. Hence
    $N_M^{\cl_{S,L}} \subseteq \pi^{-1}((N/P)_{M/P}^{\cl_{S,L}}),$ 
    so it remains to prove the other inclusion. 
    Let $u \in \pi^{-1}((N/P)_{M/P}^{\cl_{S,L}})$, so that 
    \[s \otimes \pi(u) \in \im(L \otimes N/P \to L \otimes M/P) = \ker(L \otimes M/P \to L \otimes M/N),\]
    for every $s \in S$, with the second equality by right exactness of tensor.
    This implies that if $q:M/P \onto M/N$ is the quotient map, then $(1 \otimes q)(s \otimes \pi(u))=0$. That is, $s \otimes q(\pi(u))=0$. This implies that 
    \[s \otimes u \in \ker(L \otimes M \to L \otimes M/N)=\im(L \otimes N \to L \otimes M), \]
    again by right exactness of tensor, so that $u \in N_M^{\cl_{L,S}}$. Hence $\cl_{L,S}$ is residual.

    Finally for persistence (in the sense of Remark~\ref{rem:taprops}), let $\phi: R \ra T$ be a ring homomorphism, and $x\in N^{\cl_{S,L}}_M$.  Let $\mu \in ST$.  Then $\mu = \sum_{i=1}^k s_i \otimes t_i$, $s_i \in S$, $t_i \in T$.  In the natural isomorphism \[
    (L \otimes_R T) \otimes_T (M \otimes_R T) \stackrel{\simeq}{\longrightarrow} L \otimes_R M \otimes_R T,
    \]
    the element $\mu \otimes x \otimes 1$ maps to $\sum_{i=1}^k s_i \otimes x \otimes t_i$.  Since $x \in N^{\cl_{S,L}}_M$, we have $s_i \otimes x \in \im(L \otimes_R N \ra L \otimes_R M)$ for each $i$.  Therefore, $s_i\otimes x \otimes t_i \in \im(L \otimes_R N \otimes_R T \ra L \otimes_R M \otimes_R T)$, so that $\sum_i s_i \otimes x \otimes t_i$ is in that image.  Passing back through the isomorphism above, the image in question becomes $\im((L \otimes_R T) \otimes_T (N \otimes_R T) \ra (L \otimes_R T) \otimes_T (M \otimes_R T))$, which in turn is the result of tensoring the inclusion map $NT \hookrightarrow M \otimes_RT$ over $T$ with $L \otimes_R T$.  Thus, $x \otimes 1 \in (NT)^{\cl_{ST, L \otimes_R T}}_{M \otimes_RT}$.
\end{proof}

\begin{rem}
    For the module closure coming from an $R$-module $L$, the closure of 0 in a module $M$ is given by $0_M^{\cl_L}= \bigcap_{\ell \in L} \ker(M \to L \otimes M)$ where the map sends $x \mapsto \ell \otimes x$. This is a version of a torsion submodule, and indeed this operation gives a submodule selector since module closures are residual. 
\end{rem}

Many closure operations used in commutative algebra are either equal to module closures or are slight variants on them:
\begin{enumerate}
    \item Plus closure is an algebra closure where the algebra is the absolute integral closure $R^+$ of an integral domain $R$ \cite{HH-bCM,Sm-param}, \cite[Definition 2.7]{Die-TCfinlength}. 
    \item Frobenius closure is a direct limit of algebra closures for the algebras $F_*^e(R)$ for $e \geq 1$ 
    \cite[Section 10]{HHmain}, \cite[Section 6]{HHbase}. 
    \item Big \CM\ module (respectively algebra) closures are module (respectively algebra) closures with respect to a big \CM \ module (respectively algebra) $B$ \cite{Di-clCM}, \cite[Section 3.1]{RG-bCMsing}.
    \item\label{it:tc} Tight closure agrees with a closure coming from a big \CM\ algebra on \fg\ modules \cite[Section 14]{ho-solid}, and in general is defined by a meet of  subset algebra closures for the algebras $F_*^e(R)$  \cite[Section 8]{HHmain} (see Lemma \ref{lem:tclim}).   
    \item Almost big \CM\ algebra closures are algebra closures with respect to a subset $\{\pi^{1/p^n}\}_{n > 0}$ of an almost big Cohen-Macaulay algebra $T$ \cite{maschwedetestmultiplierideals,PeRG}, \cite[Section 11]{nmeRG-cidual}. 
    \item Full extended plus closure on an integral domain containing a prime $p$ in its Jacobson radical is a 
    meet of  subset algebra closures 
    modified by the ideals $(p^NR^+)_{N\geq 1}$  with respect to the set of multiples $\{c^\epsilon\}_{\epsilon>0}$ in the absolute integral closure $R^+$  \cite[Section 1]{Heit-ex+}, \cite[Defintion 2.3]{HeMa-extplus}.

\end{enumerate}

As a result, studying (subset) module and (subset) algebra closures gives us techniques that can be applied in all of these cases.
To be explicit about it (and since we will use it later), we detail (\ref{it:tc}) below:

\begin{defn}
Let $R$ be a reduced ring of prime  \charp.  There is a unique smallest \emph{perfect} (i.e. containing a unique $p$th root of every element) ring $R_\perf$ that contains $R$ (first shown in \cite{Gr-perfect}; notation as in \cite{BIM-regperf}).  We use $R^{1/p^e}$ to denote all $x\in R_\perf$ such that $x^{p^e} \in R$.  

Recall that $p$th roots are unique in prime characteristic reduced rings when they exist.  Thus for $c\in R$, we write $c^{1/p^e}$ for the unique element whose $p^e$th power is $c$. 
\end{defn}

\begin{defn}
[\cite{HHmain}; See {\cite[Definition 2.2(ii)]{Ta-fintestnum}} for this formulation] Let $R$ be a commutative Noetherian 
reduced ring of prime characteristic $p>0$. Let $R^\circ = \{c \in R \mid c$ is not in any minimal prime of $R\}$.  Let $L \subseteq M$ be $R$-modules. Then the \emph{tight closure of $L$ in $M$}, written $L^*_M$, consists of all those $z \in M$ such that there is some $c\in R^\circ$ such that \begin{equation}\label{eq:tcmod}
c^{1/p^e} \otimes z \in \im(R^{1/p^e} \otimes_R L \ra R^{1/p^e} \otimes_R M)
\end{equation} for all $e\gg 0$, where the map is induced by the inclusion map $L \hookrightarrow M$.

An element $c\in R^\circ$ is a \emph{big test element} if whenever $L \subseteq M$ are $R$-modules and $z \in L^*_M$, (\ref{eq:tcmod}) holds for that particular $c$ and all $e\geq 0$.
\end{defn}

\begin{thm}[\cite{HHbase}]\label{thm:testexist}  Let $R$ be a prime characteristic Noetherian ring that is essentially of finite type over an excellent local ring (e.g. any prime characteristic complete Noetherian reduced ring).  Then $R$ admits a big test element.
\end{thm}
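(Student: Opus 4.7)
The overall plan is to exhibit an element $c \in R^\circ$ such that $R_c$ is regular, and then to show that $c$ (or some fixed power of $c$) satisfies the big test element property. Existence of such a $c$ follows from the excellence hypothesis: after replacing $R$ by $R/\mathfrak{p}$ for each minimal prime $\mathfrak{p}$ and eventually patching, the singular locus of $R$ is closed and proper in $\Spec R$, so one can find $c \in R^\circ$ with $R_c$ regular. (This step uses the definition of excellence, specifically the openness of the regular locus, together with the Jacobian criterion in the essentially-of-finite-type setting.) The heuristic is that regularity of $R_c$ should force tight closure to behave trivially after inverting $c$, so $c$ should annihilate $L^*_M/L$ for reasonable $L \subseteq M$.

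The next step is to reduce to an F-finite situation where one has tight control of $R^{1/p^e}$ as an $R$-module. The classical device is the \emph{gamma construction} of Hochster--Huneke, which produces a faithfully flat extension $R \to R^\Gamma$ with $R^\Gamma$ F-finite, preserving regularity of $R_c$ and the property $c \in (R^\Gamma)^\circ$. In the F-finite regular ring $(R^\Gamma)_c$, the module $(R^\Gamma)^{1/p^e}_c$ is finitely generated and flat, hence projective, so the Frobenius-theoretic obstructions collapse. One then uses faithful flatness of $R \to R^\Gamma$ to pull back the test element property: if $z \in L^*_M$ in $R$, then $1 \otimes z$ lies in the tight closure of $R^\Gamma \otimes_R L$ in $R^\Gamma \otimes_R M$, so $c$ multiplies it into the image, and faithful flatness transports this back to $R$.

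To upgrade from an ordinary test element (working for finitely generated pairs) to a \emph{big} test element (working for arbitrary $L \subseteq M$), one argues by direct limits: an arbitrary containment $c^{1/p^e} \otimes z \in \im(R^{1/p^e} \otimes L \to R^{1/p^e} \otimes M)$ factors through a finitely generated submodule, so the finitistic statement globalizes. The additional content needed is that the chosen $c$ works \emph{uniformly in $e$}, i.e.\ for all $e \geq 0$ rather than merely $e \gg 0$; this is achieved by replacing $c$ with $c^{1+1/p+1/p^2+\cdots}$-type expressions, or more concretely by multiplying by one extra factor of $c$ to absorb the finitely many bad exponents.

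The main obstacle is the passage through the gamma construction while controlling $R^\circ$: one must ensure that the chosen $c$ does not become a zerodivisor (or land in a minimal prime) after the faithfully flat base change, and that the regularity of $R_c$ is preserved. This is where the hypothesis that $R$ is essentially of finite type over an excellent local ring is essential, since it guarantees both the openness of the regular locus and compatibility of the gamma construction with formal completion. Once these technical points are resolved, the uniform-in-$e$ multiplier property follows from a direct computation using finite generation of $R^{1/p^e}$ in the F-finite setting.
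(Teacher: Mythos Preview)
The paper does not give a proof of this theorem at all; it simply cites it as a known result from \cite{HHbase} and moves on. So there is no ``paper's own proof'' to compare against.

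That said, your sketch is a reasonable outline of the Hochster--Huneke strategy in the cited reference, with the gamma construction, the regular-locus element $c$, and the F-finite reduction all appearing in roughly the right places. A few cautions if you intend this as an actual proof rather than a heuristic: the direct-limit argument you give for upgrading from ordinary to big test elements is too glib, since tight closure of arbitrary pairs is \emph{not} in general the direct limit of tight closures of finitely generated subpairs (this is precisely the finitistic-versus-big distinction), so the passage from test element to big test element requires more care and in fact uses the notion of \emph{completely stable} test elements in the original source. Also, the step ``if $z \in L^*_M$ in $R$, then $1 \otimes z$ lies in the tight closure after base change to $R^\Gamma$'' is the persistence of tight closure under the gamma construction, which itself requires justification. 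These are genuine technical points that the original paper handles with some effort; your proposal correctly identifies the architecture but underestimates the work at those two junctures.
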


Here is a first motivation for defining subset module closures:

\begin{lemma}\label{lem:tclim}
Let $R$ be a Noetherian prime characteristic reduced ring that admits a big test element $c$.  Let $*$ be the tight closure operation.  Then $* = \displaystyle \bigwedge_{e\in \N} \cl_{c^{1/p^e}, R^{1/p^e}}$.
\end{lemma}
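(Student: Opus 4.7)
The plan is to unfold both sides against the definition of a subset module closure and then use the defining property of the big test element $c$ as the bridge.

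First, I would unwind the right-hand side. By the meet formula in Proposition~\ref{pr:completelattice} and the definition of subset module closure in Definition~\ref{def:subsetmc}, an element $z \in M$ lies in $\bigl(\bigwedge_{e \in \N} \cl_{c^{1/p^e}, R^{1/p^e}}\bigr)(L,M)$ exactly when, for every $e \in \N$,
\[
c^{1/p^e} \otimes z \in \im\bigl(R^{1/p^e} \otimes_R L \to R^{1/p^e} \otimes_R M\bigr).
\]
This reduces the problem to comparing this universal quantifier over $e$ with the existential quantifier over a witness $c' \in R^\circ$ (for $e \gg 0$) appearing in the definition of $L^*_M$.

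For the containment $L^*_M \subseteq \bigl(\bigwedge_{e} \cl_{c^{1/p^e}, R^{1/p^e}}\bigr)(L,M)$, I would invoke the hypothesis that $c$ is a big test element. By definition, this means: whenever $z \in L^*_M$, the relation $c^{1/p^e} \otimes z \in \im(R^{1/p^e} \otimes_R L \to R^{1/p^e} \otimes_R M)$ holds for this particular $c$ and for \emph{all} $e \geq 0$. Thus $z$ lies in every $\cl_{c^{1/p^e}, R^{1/p^e}}(L,M)$, hence in the meet. For the reverse containment, suppose $z$ lies in the meet. Then $c^{1/p^e} \otimes z \in \im(R^{1/p^e} \otimes_R L \to R^{1/p^e} \otimes_R M)$ holds for all $e \in \N$, and in particular for all $e \gg 0$. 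Since $c \in R^\circ$ by hypothesis, this is precisely the defining condition (with witness $c$) for $z \in L^*_M$.

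The main obstacle is really no obstacle once one notices the right setup: the crux is that tight closure allows an existentially chosen witness $c' \in R^\circ$ valid only for $e \gg 0$, while the meet demands that the \emph{fixed} element $c$ serve as a witness for \emph{every} $e$. The entire content of the equality is therefore that $c$ upgrades any existentially-quantified witness to a universally-quantified one and extends the range from $e \gg 0$ to all $e \geq 0$; this is exactly the defining property of a big test element. In particular, the existence of such $c$ (via Theorem~\ref{thm:testexist}) is the nontrivial input, but the statement already takes $c$ as a given, so the proof itself is a direct unfolding of definitions.
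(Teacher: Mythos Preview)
Your proof is correct and is exactly the approach the paper takes: the paper's proof consists of the single sentence ``Unroll the definitions,'' and you have simply carried out that unrolling explicitly, using the meet formula from Proposition~\ref{pr:completelattice}, Definition~\ref{def:subsetmc}, and the defining property of a big test element.
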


\begin{proof}
    Unroll the definitions.
\end{proof}


Next we define trace modules and discuss their appearances in the literature.

\begin{defn}[{\cite[Definition 8.1]{nmeRG-cidual}}]
    Let $L$ be an $R$-module and $S$ a subset of $L$. The \textit{$(S,L)$-trace} of an $R$-module $N$ is the submodule selector
    \[\tr_{S,L}(N) = \sum_{f \in \Hom_R(L,N)} Rf(S).
    \]
    That is, $\tr_{S,L}(N)$ is the submodule of $N$ generated by the set $\{f(s) \mid f \in \Hom_R(L,N), s \in S\}$.  If $S$ is a sub\emph{module} of $L$, it is equivalent to write \[
    \tr_{S,L}(N) = \im (S \otimes_R \Hom_R(L,N) \ra N),
    \]
    where the map is given on simple tensors by $s \otimes f \mapsto f(s)$.

When $S=L$, we write $\tr_L$ for $\tr_{S,L}$ and we speak of the \emph{$L$-trace} of an $R$-module.  
When $N=R$, this is known as the \emph{trace ideal} of the $R$-module $L$ \cite{Lam99,Lin17}.





\end{defn}

\begin{rem}\label{rem:smtnotidem}
    As with module closures, 
    $\tr_L$ is an interior operation, whereas $\tr_{S,L}$ typically fails to be idempotent and is not an interior operation.
    In particular, for $I$ an ideal, $\tr_{I,R}(M) = IM$, so $\tr_{I,R}$ is only idempotent when $I=I^2$.
\end{rem}

\begin{prop}\label{pr:trNak}
Let $(R,\m)\ra (S,\n)$ be a local homomorphism of Noetherian local rings, and $L$ a finitely generated $S$-module.  Then $\tr_L$ is an absolute Nakayama interior on the category of Artinian $R$-modules.  In particular, this holds when $L$ is a finitely generated $R$-module, or when $L=S$ is a local $R$-algebra.
\end{prop}

\begin{proof}
Let $A \subseteq C$ be Artinian $R$-modules. Since $\tr_L$ is absolute, we omit reference to any ambient module $B$ containing both $A$ and $C$.  Suppose $\tr_L(A :_C\m) \subseteq A$.  First, note that $\Hom_R(L,C)$ is Artinian as an $S$-module.  Indeed, $L$ being finitely generated over $S$ means there is a surjection $S^n \onto L$, which translates into an injection $\Hom_R(L,C) \hookrightarrow \Hom_R(S^n, C) \cong \Hom_R(S,C)^n$, so that since direct sums and submodules of Artinian modules are Artinian, the claim holds.

Now let $\phi \in \Hom_R(L,C)$; assume $\im \phi \nsubseteq A$. The natural $S$-action on $\Hom_R(L,C)$ is given by $(s\phi)(\ell) := \phi(s\ell)$.  By the left-exactness of $\Hom$, we may consider $\Hom_R(L,A)$ to be an $S$-submodule of $\Hom_R(L,C)$, with $\phi \notin \Hom_R(L,A)$.  Then $H := \frac{S\phi + \Hom_R(L,A)}{\Hom_R(L,A)}$ is a nonzero $S$-subquotient of $\Hom_R(L,C)$.  Thus, it is Artinian as an $S$-module.  Since it is also cyclic as an $S$-module (generated by the image of $\phi$), there is some $k\geq 1$ such that $\n^k H=0$ but $\n^{k-1}H \neq 0$.  Let $t\in \n^{k-1}$ such that $tH \neq 0$.  Let $\psi = t\phi$.  Then $\im \psi \nsubseteq A$.

For any $m\in \m$ and $\ell \in L$, we have $m\psi(\ell)=m\phi(t\ell) = \phi(mt\ell) = (mt\phi)(\ell) \in A$ since $mt \in \m\n^{k-1} \subseteq \n^k \subseteq \ann H$.  Thus, $\im \psi \subseteq (A:_C\m)$, so there is a commutative diagram of $R$-linear maps \[
\xymatrix{
& (A :_C\m) \ar[dr]^i\\
L \ar[rr]_\psi \ar[ur]^{\widetilde \psi}& & C,
}
\]
where $i: (A :_C\m) \ra C$ is the inclusion map. Thus, $\im \psi = \im \tilde \psi \subseteq \tr_L(A :_C \m) \subseteq A$ by assumption.  But we saw above that $\im \psi \nsubseteq A$, providing a contradiction.  Therefore, the image of any $\phi \in \Hom_R(L,C)$ must be contained in $A$.  That is, $\tr_L(C) \subseteq A$, completing the proof that $\tr_L$ is a Nakayama interior.
\end{proof}

%

Trace ideals come up in a number of places in the literature.

\begin{example}
    Let $R$ be a \CM\ local ring with canonical module $\omega$. Since $\omega$ is free if and only if $R$ is Gorenstein, $\tr_\omega(R)=R$ exactly when the ring is Gorenstein.
\end{example}

This leads to the following class of rings:

\begin{defn}[Herzog-Hibi-Stamate 2019]
    Let $(R,\m)$ be a \CM\ local ring with canonical module $\omega$. Then $R$ is \emph{nearly Gorenstein} if $\tr_\omega(R) \supseteq \m$.
\end{defn}

A number of papers describe classes of rings with the nearly Gorenstein property, including \cite{hibistamategorensteinonpuncturedspectrum,hallkolblmatsushitamiyashitanearlygorensteinpolytopes,miyashitalevelness,miyazakipageehrhartrings,miyashitageneralizedgorensteinproperties,miyashitanearlygorensteinprojectivecurves,ficarraherzogstamatetrivedi,kumashiromatsuokanakashimamaximalminors,miyashitavarbaro,moscariellostrazzanti,hibistamatefinitegraphs,caminatastrazzanticyclicquotientsingularities}.  Other papers study the canonical trace, including \cite{herzogshinyacolengthofcanonicaltrace,ficarracanonicaltrace,herzogmohammadipage,daokobayashitakahashi,herzoghibistamatenumericalsemigroup}. Additional work compares the nearly Gorenstein property to the almost Gorenstein property, for example \cite{kumashirereductionnumber}. Herzog, Hibi, and Stamate also define and study far-flung Gorenstein singularities, when the canonical trace is as small as possible \cite{herzogshinyastamatefarflunggorenstein}.


Other appearances of trace ideals in the literature include:
\begin{enumerate}
    \item Faber studies trace ideals of \fg\ \CM\ modules over rings of finite \CM\ type \cite{FaberTraceIdeals}.
    \item Lindo and Pande prove that every ideal is a trace ideal if and only if the ring is Artinian Gorenstein \cite{LindoPande}. This work is extended by Goto, Isobe, and Kumashiro \cite{gotoisobekumashiro}, who classify when every submodule of a given module is trace.
    \item Lindo applies trace ideals to study rigid modules in \cite{Lin17}, and follows up to connect trace modules to endomorphism invariant modules and reflexivity in \cite{lindothompson}. Lindo further applies trace modules to a case of the Auslander-Reiten Conjecture in \cite{lindononrigidity}.
    \item Dao and Lindo use stable trace ideals in \cite{daolindo} to study reflexive modules and Arf rings.
    \item Kumashiro determines when a ring has a finite number of trace ideals, connecting this to the reflexive modules and their endomorphism algebra \cite{kumashirofinitetraceideals}. Kobayashi and Kumashiro also examine cases where a ring has a finite number of trace ideals in \cite{kobayashikumashiro}.
    \item Maitra works with a variant called a partial trace ideal and its connection to modules of differentials beginning in \cite{MaitraPartialtraceideals}.
    \item Benali, Pothagoni, and the second-named author compute trace ideals of maximal \CM\ modules over ADE Singularities in \cite{benalipothagoniRG}, given that these are the test ideals of the corresponding module closures.
    \item McDonald proves that the multiplier ideal is a sum of trace ideals in \cite{Mcd-multklt}.
\end{enumerate}   

\begin{notation}
     We defined $\tr_{S,L}$ above as a submodule selector. However, we can define an absolute pair operation as in Section \ref{sec:subselect} by
    \[\tr_{S,L}(N,M):=\tr_{S,L}(N).\]

    Since the ambient module does not matter in absolute pair operations, from this point on we omit mention of the ambient module $M$ when using $\tr_L$ and $\tr_{S,L}$.
\end{notation}

The following restates \cite[Lemma 8.4]{nmeRG-cidual} in the context of pair operations. This adaptation relies on the results of Section \ref{sec:subselect} that indicate how properties of a submodule selector $\alpha$ translate to properties of $g(\alpha)$.

\begin{lemma}[See {\cite[Lemma 8.4]{nmeRG-cidual}}]
\label{lem:algtrace}
Let $L$ be an $R$-module and $S$ a subset of $L$.  Then $\tr_{S,L}$ is a functorial, absolute, intensive pair operation that is order-preserving on submodules, which is idempotent if $S=L$. Consequently, when $S=L$ it is a functorial, absolute interior operation.
\end{lemma}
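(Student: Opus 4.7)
The plan is to verify each listed property directly from the definition, invoking the submodule-selector / absolute pair operation dictionary from Section~\ref{sec:subselect} to avoid duplicating checks. Recall that $\tr_{S,L}$ is (by the notation immediately preceding the lemma) the absolute pair operation $\gamma(\alpha)$ for the submodule selector $\alpha(N) := \sum_{f \in \Hom_R(L,N)} Rf(S)$. Absoluteness is thus automatic from the construction, and by the table comparing properties of $\alpha$ with those of $g = \gamma(\alpha)$, it suffices to show that $\alpha$ is well-defined on isomorphism classes, intensive, functorial, and (when $S = L$) idempotent.

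First, intensivity is immediate: each generator $rf(s)$ of $\alpha(N)$ lies in $N$ since $f(s) \in N$. For functoriality, let $h: N \to N'$ be any $R$-linear map; for each $f \in \Hom_R(L,N)$, the composite $h \circ f$ lies in $\Hom_R(L,N')$, so $h(f(s)) = (h \circ f)(s) \in \alpha(N')$ for every $s \in S$. Extending $R$-linearly gives $h(\alpha(N)) \subseteq \alpha(N')$. Specializing $h$ to an isomorphism yields isomorphism-invariance (so $\tr_{S,L}$ really is a pair operation), and by the dictionary this single functoriality statement translates into $\tr_{S,L}$ being both functorial and order-preserving on submodules as an absolute pair operation.

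Now suppose $S = L$, so $\tr_L(N) = \sum_{f \in \Hom_R(L,N)} f(L)$. Intensivity already gives $\tr_L(\tr_L(N)) \subseteq \tr_L(N)$. For the reverse inclusion, observe that for any $f \in \Hom_R(L,N)$, the image $f(L)$ lies in $\tr_L(N)$ by definition, so $f$ corestricts to $\tilde f \in \Hom_R(L, \tr_L(N))$ with $\tilde f(L) = f(L) \subseteq \tr_L(\tr_L(N))$. Summing over all $f$ yields $\tr_L(N) \subseteq \tr_L(\tr_L(N))$, establishing idempotence of $\alpha$ and hence of $\gamma(\alpha) = \tr_L$.

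The only delicate point is this corestriction step, and it is where the hypothesis $S = L$ is essential: for a proper subset $S \subsetneq L$ one only knows that $f(S) \subseteq \tr_{S,L}(N)$, not all of $f(L)$, so $f$ need not corestrict and the argument breaks (matching the remark that $\tr_{I,R} = I \cdot (-)$ is almost never idempotent). Combining intensivity, order-preservation on submodules, and idempotence for $S = L$ gives the final ``interior operation'' conclusion, while the functoriality and absoluteness established above supply the remaining adjectives.
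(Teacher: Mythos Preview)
Your proof is correct and follows precisely the approach the paper indicates: the paper does not give an explicit proof here but instead points to \cite[Lemma 8.4]{nmeRG-cidual} for the submodule-selector properties and to the dictionary of Section~\ref{sec:subselect} for the translation to absolute pair operations, which is exactly what you carry out. Your explicit verification of functoriality, intensivity, and the corestriction argument for idempotence when $S=L$ matches the content of the cited lemma.
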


In order to work with closures coming from families of $R$-modules or algebras, such as integral closure and closures coming from a family of big \CM\ modules or algebras, we combine the idea of trace with direct and inverse limits.

The following condition helps us relate different traces of the same module.

\begin{defn}
Let $L$ and $M$ be $R$-modules. We say that $L$ \textit{generates} $M$ if some direct sum of copies of $L$ surjects onto $M$. In particular, $L$ generates $M$ if there is a surjection $L \twoheadrightarrow M$.
\end{defn}

The following lemma is well-known, but appears in particular in \cite[Proposition 2.8]{Lin17}. That paper states the result in a narrower context, but the proof works in our more general setting.

\begin{lemma}
If $L \twoheadrightarrow L'$ is a surjection of $R$-modules, or more generally if $L$ generates $L'$, then $\tr_{L'} \le \tr_L$. 
\end{lemma}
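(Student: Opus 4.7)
The plan is to unpack the definitions of trace and of ``generates,'' and observe that any homomorphism from $L'$ to $N$ can be factored through a direct sum of copies of $L$, so its image is automatically in $\tr_L(N)$.

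More concretely, first I would fix an arbitrary $g \in \Hom_R(L',N)$ and show that $\im(g) \subseteq \tr_L(N)$. By hypothesis, there is a surjection $\pi: \bigoplus_{i \in I} L \onto L'$ for some index set $I$. Composing, $g \circ \pi: \bigoplus_i L \to N$ is an $R$-linear map whose image equals $\im(g)$, since $\pi$ is surjective. For each $i$, let $\iota_i: L \hookrightarrow \bigoplus_i L$ be the canonical inclusion and set $f_i := g \circ \pi \circ \iota_i \in \Hom_R(L,N)$. Then every element of $\im(g \circ \pi)$ is a finite sum of elements of the form $f_i(\ell)$ for various $i$ and $\ell \in L$, hence lies in $\sum_{f \in \Hom_R(L,N)} Rf(L) = \tr_L(N)$. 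Thus $\im(g) \subseteq \tr_L(N)$.

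Finally, summing over all $g \in \Hom_R(L',N)$ gives
\[
\tr_{L'}(N) = \sum_{g \in \Hom_R(L',N)} Rg(L') = \sum_{g \in \Hom_R(L',N)} \im(g) \subseteq \tr_L(N),
\]
which completes the proof. There is no real obstacle here; the only mild subtlety is that $I$ may be infinite, but this causes no trouble because $\tr_L(N)$ is closed under arbitrary sums of images of maps $L \to N$ by definition. The surjective case ($L \onto L'$) is just the special case $I = \{*\}$.
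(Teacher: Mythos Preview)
Your argument is correct. The paper does not actually supply a proof of this lemma; it simply records it as well-known and cites \cite[Proposition 2.8]{Lin17}. Your unpacking of the definitions and the factorization through $\bigoplus_i L$ is exactly the standard verification, and it handles the general ``generates'' case cleanly. There is nothing to compare against in the paper itself.
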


\begin{example}
    Given $R$-algebras $A$ and $B$ and an $R$-algebra map $A \to B$, $A$ generates $B$, so $\tr_B \le \tr_A$.
    This is applied in \cite{PeRG} to study test ideals of directed families of big \CM\ $R$-algebras.
\end{example}








Module closures and the trace operations are dual to each other, even with respect to subsets:
\begin{thm}\label{thm:tracedual}
Let $R$ be a complete local Noetherian commutative ring, $L$ an $R$-module, and $S \subseteq L$ a subset.  Then for $ (N,M) \in \cP$ as in Remark \ref{ourpairs}, 
$\tr_{S,L}^\dual(N,M) = N_M^{\cl_{S,L}}$. 
\end{thm}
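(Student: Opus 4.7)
The plan is to work element-wise via Proposition~\ref{pr:kernelview}, which tells us that $u \in \tr_{S,L}^\dual(N,M)$ if and only if $g(u) = 0$ for every $g \in \tr_{S,L}((M/N)^\vee, M^\vee)$. Since $\tr_{S,L}$ is absolute (Lemma~\ref{lem:algtrace}), this set equals $\tr_{S,L}((M/N)^\vee) = \sum_{f \in \Hom_R(L,(M/N)^\vee)} Rf(S)$, so the vanishing condition need only be checked on the generators $f(s)$. Here $f(s) \in (M/N)^\vee$ is viewed as an element of $M^\vee$ via the convention of Definition~\ref{def:nonresidualdual}, namely pre-composition with the quotient map $\pi: M \onto M/N$. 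Hence $u \in \tr_{S,L}^\dual(N,M)$ if and only if $f(s)(\pi(u)) = 0$ for every $s \in S$ and every $f \in \Hom_R(L, (M/N)^\vee)$.

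Next, I would apply the Hom-tensor adjunction to identify $\Hom_R(L, (M/N)^\vee) = \Hom_R(L, \Hom_R(M/N, E))$ with $(L \otimes_R M/N)^\vee$: a map $f$ corresponds to $\tilde f: L \otimes_R M/N \to E$ satisfying $\tilde f(\ell \otimes \bar m) = f(\ell)(\bar m)$. Under this identification, writing $\bar u = \pi(u)$, the previous condition rewrites as $\tilde f(s \otimes \bar u) = 0$ for all $s \in S$ and all $\tilde f \in (L \otimes_R M/N)^\vee$.

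To finish, I would use that $E = E_R(k)$ is an injective cogenerator of $R\text{-Mod}$: for any $R$-module $V$ and any nonzero $v \in V$, some $\phi \in V^\vee$ has $\phi(v) \neq 0$, obtained by factoring $Rv \onto Rv/\m v \cong k \into E$ and extending via injectivity of $E$. Applied to $V = L \otimes_R M/N$ and $v = s \otimes \bar u$, the condition $\tilde f(s \otimes \bar u) = 0$ for all $\tilde f$ is equivalent to $s \otimes \bar u = 0$ in $L \otimes_R M/N$. By right-exactness of $L \otimes_R -$ applied to $N \into M \onto M/N$, we have $\ker(L \otimes_R M \to L \otimes_R M/N) = \im(L \otimes_R N \to L \otimes_R M)$, so $s \otimes \bar u = 0$ in $L \otimes_R M/N$ if and only if $s \otimes u \in \im(L \otimes_R N \to L \otimes_R M)$. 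Requiring this for every $s \in S$ is exactly the defining condition for $u \in N_M^{\cl_{S,L}}$, which completes the proof.

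The only real obstacle is the careful identification in the first two steps: one must verify that the embedding $(M/N)^\vee \hookrightarrow M^\vee$ combined with the adjunction converts the pairing $f(s)(u)$ into $\tilde f(s \otimes \bar u)$. Once this bookkeeping is nailed down, the remaining equivalences (cogenerator and right-exactness) are standard.
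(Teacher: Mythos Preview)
Your proposal is correct and follows essentially the same approach as the paper: both proofs use Proposition~\ref{pr:kernelview} to reduce to a vanishing condition on $\tr_{S,L}((M/N)^\vee)$, then apply Hom-tensor adjointness to rewrite this in terms of $(L \otimes_R M/N)^\vee$, invoke that $E$ is an injective cogenerator, and finish with right-exactness of tensor. The only cosmetic difference is that you run the argument as a single biconditional chain on generators $f(s)$, whereas the paper treats the two inclusions separately and in the first direction writes out a general element $g = \sum_i \phi_i(s_i)$ explicitly.
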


That is, \emph{the $(S,L)$-trace is the interior operation dual to the subset module closure given by $(S,L)$}. This demonstrates how our framework can be used to achieve results comparable to those in \cite{PeRG}.

\begin{proof}
    First, let $x \in N_M^{\cl_{S,L}}$. Then for every $s \in S$, $s \otimes x \in \im(L \otimes_R N \to L \otimes_R M)$. By Proposition \ref{pr:kernelview}, it suffices to show that $g(x)=0$ for every $g \in \tr_{S,L}((M/N)^\vee,M^\vee)=\tr_{S,L}((M/N)^\vee)$.
Recall from Definition~\ref{def:nonresidualdual} that we identify $(M/N)^\vee$ with the submodule $U := \{h \mid h(N) = 0\}$ of $M^\vee$.

Accordingly, let $g \in \tr_{S,L}(U)$. Then there exist maps $\phi_i: L \ra U$ and elements $s_i \in S$ such that $g= \sum_i \phi_i(s_i)$. By viewing the $\phi_i$ as maps $L \to M^\vee$, we can use Hom-tensor adjointness to define maps $\psi_i: L \otimes_R M \ra E$ such that $\psi_i(\ell \otimes y) = \phi_i(\ell)(y)$ for each $\ell \in L$, $y \in M$. 
For each $i$, there exist $\lambda_{ij} \in L$, $n_{ij} \in N$ such that $s_i \otimes x = \sum_j \lambda_{ij} \otimes n_{ij}$.  Thus, \[
g(x) = \left(\sum_i \phi_i(s_i)\right)(x)=\sum_i \psi_i(s_i \otimes x) = \sum_i \psi_i\left(\sum_j \lambda_{ij} \otimes n_{ij}\right) = \sum_{i,j} \psi_i(\lambda_{ij} \otimes n_{ij}) = \sum_{i,j} \phi_i(\lambda_{ij})(n_{ij})=0,
\]
since for each pair $i,j$ we have $\phi_i(\lambda_{ij}) \in U$, and is hence a function that vanishes on $N$.

    For the other inclusion, let $x \in \tr_{ S,L}^\dual(N,M)$. Then for every $g:M/N \to E$ that can be written as $\sum_i \phi_i(s_i)$ (as in the other direction), $g(\bar{x})=0$. By Hom-tensor adjointness, for every map $\psi:L \otimes M/N \to E$ and every $s \in S$, $\psi(s \otimes \bar x)=0$. Since $\Hom_R(-,E)$ doesn't kill a nonzero module, for every $s \in S$, $s \otimes x \in \ker(L \otimes M \to L \otimes M/N)=\im(L \otimes N \to L \otimes M)$.
\end{proof}

\begin{rem}
    Note that the $R$-module $L$ has no restriction on it -- it does not have to be either \fg\ or artinian.
\end{rem}

Combining this with Theorem \ref{thm:finintideal}, we get the following slight generalization of a result in the literature:

\begin{thm}[See {\cite{PeRG,ERGV-chdual}}]
    Let $(R,\m,k)$ be a complete Noetherian local ring, $E = E_R(k)$, $L$ an $R$-module, and $S \subseteq L$. Then $\tr_{S,L}(R)=\ann_R(0_E^{\cl_{S,L}})$, i.e. $\tr_{S,L}(R)$ is the test ideal (in the version defined by $E$) for the subset module closure $\cl_{S,L}$.
\end{thm}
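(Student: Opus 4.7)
The plan is to combine Theorem~\ref{thm:tracedual} with Proposition~\ref{pr:test} (equivalently, with the $I=R$ instance of Theorem~\ref{thm:finintideal}) to reduce the identity to an immediate duality computation.

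First, I would invoke Theorem~\ref{thm:tracedual}, which gives the pair-operation identity $\tr_{S,L}^\dual(N,M) = N_M^{\cl_{S,L}}$, i.e.\ $\tr_{S,L}^\dual = \cl_{S,L}$ on our class of pairs. Applying Lemma~\ref{lem:doubledual} yields the reverse identity $\tr_{S,L} = \cl_{S,L}^\dual$. Next, viewing $\cl_{S,L}$ as a pair operation on submodules of $E$ (which is legitimate, since submodules of Matlis-dualizable modules are Matlis-dualizable), I would apply the second assertion of Proposition~\ref{pr:test} to $p = \cl_{S,L}$ to obtain
\[
\cl_{S,L}^\dual(R,R) \;=\; \ann_R\bigl(\cl_{S,L}(0,E)\bigr) \;=\; \ann_R\bigl(0_E^{\cl_{S,L}}\bigr).
\]
Since $\tr_{S,L}$ is absolute as a pair operation (Lemma~\ref{lem:algtrace}), $\tr_{S,L}(R,R) = \tr_{S,L}(R)$ in the submodule-selector sense; chaining the three equalities gives $\tr_{S,L}(R) = \ann_R(0_E^{\cl_{S,L}})$, as claimed.

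One minor subtlety worth flagging is that $\cl_{S,L}$ is in general not idempotent (as noted after Definition~\ref{def:subsetmc}), so it is not a closure operation in the strict sense. This poses no obstruction, because Proposition~\ref{pr:test} requires only that $p$ be a pair operation on submodules of $E$, with no idempotence hypothesis. Alternatively, one could route the argument through Theorem~\ref{thm:finintideal}, whose proof — as observed in the remark immediately after its statement — works for any functorial, residual, extensive, order-preserving-on-submodules pair operation; all four properties hold for $\cl_{S,L}$ (functoriality and residuality are established in the proposition following Definition~\ref{def:subsetmc}, while extensivity and order-preservation are immediate from the tensor-product definition). I do not foresee a serious obstacle here: once Theorem~\ref{thm:tracedual} is granted, the statement is essentially a one-line consequence of the smile-duality machinery.
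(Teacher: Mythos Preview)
Your proposal is correct and follows essentially the same approach as the paper: the paper simply says the result ``follows from applying Theorem~\ref{thm:finintideal}'' (implicitly together with Theorem~\ref{thm:tracedual}), and you have unpacked exactly that derivation in detail, including the correct observation that idempotence of $\cl_{S,L}$ is not needed.
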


Note that this follows from applying Theorem \ref{thm:finintideal}, and this can be done for finitistic test ideals as well.

\begin{example}
    This result is applied in \cite{benalipothagoniRG} to compute the test ideals of \CM\ module closures, as trace ideals for \fg\ modules can be computed in Macaulay2.   In \cite{EMRS}, it is applied to prove that the alterations closure has test ideal equal to its multiplier ideal.
\end{example}

Applying these results to tight closure, we get the following result, which was first shown in \cite{DEST2}, but with different methods than we are using here: 

\begin{thm}
Let $R$ be a complete Noetherian local reduced ring of prime \charp, and $c$ a big test element.  Then \[
\tau_*(R) = \left(\bigvee_{e\in \N} \tr_{c^{1/p^e}, R^{1/p^e}}\right)(R) =  \sum_{e \ge 0} \sum_{\psi\in \Hom_R(R^{1/p^e},R)} \psi((cR)^{1/p^e}).
\]
\end{thm}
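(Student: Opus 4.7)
The plan is to chain together three results already in the survey: Lemma~\ref{lem:tclim} expressing tight closure as a meet of subset algebra closures, Proposition~\ref{pr:limduals} exchanging meets and joins under smile duality, and Theorem~\ref{thm:tracedual} identifying trace as the smile dual of module closure. The last piece is the well-known identification of the test ideal with the dual of tight closure evaluated at $(R,R)$, which is packaged in Proposition~\ref{pr:test} (or the $I=R$ case of Theorem~\ref{thm:finintideal}) together with the Hochster--Huneke fact that $\tau_*(R) = \ann_R(0_E^*)$ over a complete local reduced ring of prime characteristic admitting a big test element.

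First, Lemma~\ref{lem:tclim} gives $* = \bigwedge_{e\in \N} \cl_{c^{1/p^e}, R^{1/p^e}}$. Smile-dualizing both sides and invoking Proposition~\ref{pr:limduals} produces $*^\dual = \bigvee_{e\in\N}\left(\cl_{c^{1/p^e}, R^{1/p^e}}\right)^\dual$. Theorem~\ref{thm:tracedual} says that $\tr_{S,L}^\dual = \cl_{S,L}$, and so by Lemma~\ref{lem:doubledual} we obtain $(\cl_{S,L})^\dual = \tr_{S,L}$ for each relevant pair $(S,L)$. Substituting, $*^\dual = \bigvee_{e\in\N}\tr_{c^{1/p^e}, R^{1/p^e}}$. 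Evaluating at $(R,R)$ using the description of joins in Proposition~\ref{pr:completelattice} yields
\[
*^\dual(R,R) \;=\; \sum_{e\ge 0}\tr_{c^{1/p^e}, R^{1/p^e}}(R).
\]
Now Proposition~\ref{pr:test} applied to $p=*$ with $I=R$ gives $*^\dual(R,R) = \ann_R(0_E^*)$, and this annihilator is classically equal to the big test ideal $\tau_*(R)$ (this is the identification behind Theorem~\ref{thm:finintideal} when $R$ is a complete Noetherian local reduced ring of prime characteristic). Chaining these equalities delivers the first equality of the theorem.

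For the second equality, unwind the definition of $\tr_{c^{1/p^e}, R^{1/p^e}}(R)$: it is the $R$-submodule of $R$ generated by $\{\psi(c^{1/p^e}) : \psi \in \Hom_R(R^{1/p^e}, R)\}$. I would then verify
\[
\sum_{\psi\in \Hom_R(R^{1/p^e},R)} R\cdot \psi(c^{1/p^e}) \;=\; \sum_{\psi\in \Hom_R(R^{1/p^e},R)} \psi((cR)^{1/p^e}).
\]
The containment $\subseteq$ is immediate from $R$-linearity of $\psi$ (using $R\subseteq R^{1/p^e}$). For $\supseteq$, note that a general element of $\psi((cR)^{1/p^e})$ has the form $\psi(c^{1/p^e} s^{1/p^e})$ for some $s\in R$; twisting $\psi$ by multiplication by $s^{1/p^e}$ inside $R^{1/p^e}$ produces a new $R$-linear map $\psi_s \in \Hom_R(R^{1/p^e},R)$ with $\psi_s(c^{1/p^e})=\psi(c^{1/p^e} s^{1/p^e})$, putting the element into the left-hand sum.

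The main obstacle I anticipate is the careful justification of $\tau_*(R) = *^\dual(R,R)$, since Definition~\ref{def:testideals} defines $\tau_*(R)$ as an intersection over \emph{all} pairs $L\subseteq M$ while Proposition~\ref{pr:test} only gives containment in the intersection over pairs with $B/A$ Artinian. The resolution is the classical Hochster--Huneke fact (using that tight closure commutes with Matlis duality for Artinian modules, and that a complete local reduced Noetherian ring of prime characteristic is approximately Gorenstein) that reduces the big test ideal to $\ann_R(0_E^*)$; this is precisely where the hypothesis that $c$ is a big test element is used, and it is what Theorem~\ref{thm:finintideal} formalizes. Once this identification is in hand, the rest of the proof is a direct substitution.
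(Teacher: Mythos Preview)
Your proposal is correct and follows essentially the same approach as the paper: both chain together Lemma~\ref{lem:tclim}, Proposition~\ref{pr:limduals}, and Theorem~\ref{thm:tracedual} to compute $*^\dual$, then identify $*^\dual(R,R)$ with $\tau_*(R)$ via Theorem~\ref{thm:finintideal}. Your unrolling of the second equality is more explicit than the paper's one-line ``unrolling definitions,'' and the concern you raise about $\tau_*(R) = \ann_R(0_E^*)$ is exactly the step the paper absorbs into its citation of Theorem~\ref{thm:finintideal}.
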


\begin{proof}
    The second equality is a result of unrolling definitions.  For the first equality, from Theorem~\ref{thm:finintideal} we have $\tau_*(R) = \ann 0^*_E = R^R_{*^\dual}$. But by Lemma~\ref{lem:tclim}, Proposition~\ref{pr:limduals}, and Theorem~\ref{thm:tracedual} we have \[
    *^\dual = \left(\bigwedge_{e\in \N} \cl_{c^{1/p^e}, R^{1/p^e}}\right)^\dual = \bigvee_{e\in \N} (\cl_{c^{1/p^e}, R^{1/p^e}})^\dual = \bigvee_{e\in \N} \tr_{c^{1/p^e}, R^{1/p^e}}. \qedhere
    \]
\end{proof}



Compare the above to the following more familiar result, with identical conclusion but quite different hypotheses on the ring.

\begin{thm}[\cite{HaTa-gentest}, with notation from \cite{ScTu-survey} and Definition~\ref{def:testideals}]
    Let $R$ be an integral domain essentially of finite type over a perfect field of \charp. Fix a  sufficiently nice test element $c \in R$. Then
    \[\tau_*(R)=\sum_{e \ge 0} \sum_{\psi\in \Hom_R(R^{1/p^e},R)} \psi((cR)^{1/p^e}).\]
\end{thm}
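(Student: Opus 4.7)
The plan is to reduce this statement to the immediately preceding theorem, which establishes the identical conclusion in the complete Noetherian local reduced setting, by means of a localization-and-completion argument that exploits the F-finiteness of $R$. In effect, the framework of smile duality applied complete-locally recovers the Hara-Takagi formula globally once one checks that both sides of the identity are compatible with the relevant base changes.

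First I would record the relevant structural facts. Because $R$ is essentially of finite type over a perfect field of characteristic $p$, the ring $R$ is F-finite, so $R^{1/p^e}$ is a finitely presented $R$-module for every $e \geq 0$; in particular $\Hom_R(R^{1/p^e},-)$ commutes with flat base change. Moreover $R$ is excellent, so for each maximal ideal $\m$ the completion $\widehat{R_\m}$ remains a reduced Noetherian local ring, and the image $\hat c$ of the test element $c$ persists as a big test element in $\widehat{R_\m}$ by the standard transfer results for test elements in excellent rings.

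Next I would verify that both sides of the desired equality are compatible with localization at an arbitrary maximal ideal $\m$ and with subsequent $\m$-adic completion. For the right-hand side
\[
J := \sum_{e \geq 0} \sum_{\psi \in \Hom_R(R^{1/p^e},R)} \psi\bigl((cR)^{1/p^e}\bigr),
\]
the finite presentation of $R^{1/p^e}$ together with the compatibility of $p^e$-th roots with flat base change yields $J \cdot \widehat{R_\m} = \sum_e \sum_\phi \phi((c\widehat{R_\m})^{1/p^e})$, with $\phi$ ranging over $\Hom_{\widehat{R_\m}}(\widehat{R_\m}^{1/p^e},\widehat{R_\m})$. For the left-hand side, the test ideal $\tau_*(R)$ is known to commute with localization at maximal ideals and with completion under the F-finite excellent hypotheses present here, giving $\tau_*(R) \cdot \widehat{R_\m} = \tau_*(\widehat{R_\m})$.

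With these compatibilities in hand, it suffices to verify the identity after completion at each maximal ideal, where it is exactly the content of the preceding theorem applied to $\widehat{R_\m}$ with big test element $\hat c$. Since any two ideals of the Noetherian ring $R$ agreeing after extension to $\widehat{R_\m}$ for every maximal $\m$ are equal (by faithful flatness of $R \to \prod_\m \widehat{R_\m}$ on finitely generated modules), the global identity follows. The main obstacle is the compatibility statement for the left-hand side: tight closure can fail to commute with localization in general, and establishing $\tau_*(R) \cdot \widehat{R_\m} = \tau_*(\widehat{R_\m})$ is the substantive technical input, but it is precisely in the F-finite excellent setting at hand that these compatibilities hold, which is what makes the Hara-Takagi statement go through beyond the complete local case already covered.
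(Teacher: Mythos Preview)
The paper does not supply a proof of this theorem at all: it is quoted from \cite{HaTa-gentest} purely for comparison with the immediately preceding theorem, which the authors \emph{do} prove via their smile-duality machinery in the complete local setting. So there is no ``paper's own proof'' to compare your proposal against.

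That said, your reduction-to-completion strategy is coherent in outline but leans on a heavy input at exactly the point you flag. The identity $\tau_*(R)\,\widehat{R_\m} = \tau_*(\widehat{R_\m})$ is a genuine theorem, and in the F-finite setting its standard proofs typically pass through the description of the test ideal in terms of $p^{-e}$-linear maps (Cartier operators) --- i.e., essentially the formula you are trying to establish. Depending on which source you cite for the commutation step, your argument risks being circular. The Hara--Takagi approach instead works directly with the trace/Cartier description in the F-finite case without first reducing to the complete local situation, which is why the paper presents the two theorems as parallel results reached by different routes rather than one being a consequence of the other.
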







\subsection{Core-hull duality and basically full closures}
\label{sec:corehull}


In this subsection we discuss the $\cl$-core, which is a generalization of the integral closure core, and its dual hull.  
We mention some of the many investigations into core in the literature. In particular, existing work has touched on the pair operation properties discussed in this paper. In addition, we provide an introduction to the $J$-basically full closure and $J$-basically empty interior, which relate to the only known formulas for the hull of a submodule of the injective hull of the residue field.

The (integral closure) core was originally introduced by Rees and Sally  \cite{ReSa-core} in their proof of the Brian\c{c}on-Skoda Theorem which states that for a $d$-dimensional regular local ring, the integral closure of the $d$th power of any ideal $I$ is contained in $I$.  Rees and Sally actually proved the stronger statement that the integral closure of the $d$th power of $I$ is contained in all the reductions of $I$ and hence in the core of $I$:  \[{\rm core}(I):= \bigcap\{ J \subseteq I \mid J \text{ an (integral) reduction of } I\}.\]  
Below we detail some applications of core in the literature.
\begin{enumerate}
    \item  Adjoint or multiplier ideals have been one tool commonly used for classifying singularities.  Early on in the study of core of ideals, Huneke and Swanson \cite[Proposition 3.14]{HuSw-core} noted the relationship between the core of ideals and adjoint (or multiplier) ideals.  They showed that in a 2-dimensional regular local ring,  ${\rm core}(I)={\rm adj}(I^2)$ for ideals $I$.  Hyry and Smith \cite{HySmi-Kawcore} generalized this formula to $\m$-primary ideals $I$ in a $d$-dimensional Gorenstein local ring of essentially finite type over a field of characteristic 0 with $R[It]$ having rational singularities, i.e. ${\rm core}(I)={\rm adj}(I^d)$. 
    \item  The core of powers of the graded maximal ideal $\m$ of a standard graded ring $R$ strikingly has ties to geometric properties of $\text{Proj}(R)$. Notably, Hyry and Smith \cite{HySmi-Kawcore}, \cite{HySmi-cvgc} investigate a variant of the core, the graded core:  \[{\rm gradedcore}(I)=\bigcap\{ J \subseteq I \mid J \text{ a homogeneous (integral) reduction of } I\}.\]  Assuming $a$ is the $a$-invariant of $R$, they show that if $${\rm gradedcore}(\m^n)={\rm core}(\m^n)=\m^{nd+a+1},$$ then Kawamata's conjecture holds. Kawamata's conjecture asserts that every nef line bundle adjoint to an ample line bundle over a smooth projective variety admits a nonzero section.  It is known that ${\rm core}(\m^n)=\m^{dn+a+1}$ if and only if $[\omega]_{-a}$, the submodule of the canonical module generated in fixed degree $-a$ is faithful by work of \cite{HySmi-Kawcore}, \cite{HySmi-cvgc} and \cite{FPUgradanncore}.  Showing that Kawamata's conjecture holds amounts to showing that the core and gradedcore of powers of the maximal ideal are equal.  
    \item  Another geometric property related to the core of powers of the homogeneous maximal ideal of a standard graded rings is the Cayley-Bacharach Property: If $X$ is a finite set of reduced points in projective space, then the Hilbert function of $X$ with respect to any point $P \in X$ does not depend on $P$.  Fouli, Polini and Ulrich show in \cite{FPUgradanncore}, that if $R$ is the homogenous coordinate ring of such a finite set $X$, with graded maximal ideal $\m$, ${\rm core}(\m)=\m^{a+2}$ if and only if $X$ has the Cayley-Bacharach Property.
\end{enumerate}

 We begin by generalizing reductions and core to general closure operations.

\begin{defn}[compare {\cite[Section 2]{ERGV-chdual} or \cite[Definition 3.2]{FoVa-core}}]
Let $R$ be a commutative ring and $\cl$ a closure operation defined on a class $\cP$ of pairs $(N,M)$ as in Remark \ref{ourpairs}.
We say that $L \subseteq N$ is a \emph{$\cl$-reduction of $N$ in $M$} if 
$L^{\cl}_M=N^{\cl}_M$.
Note that 
$L \subseteq N \subseteq L_M^\cl$ if and only if $L$ is a $\cl$-reduction of $N$ in $M$.

The \emph{$\cl$-core of $N$ with respect to $M$} is the intersection of all $\cl$-reductions of $N$ in $M$, or
\[\cl\core_M(N):= 
\bigcap \{L \mid L \subseteq N \subseteq L_M^{\cl} 
\}.\]
When taking the $\cl$-core of an ideal $I$ in $R$, we will denote $\cl\core_R(I)$ by $\cl\core(I)$.
\end{defn}

\begin{example} Here are some straightforward examples demonstrating $\cl$-core of a submodule:  
  \begin{enumerate}
      \item If $N$ is the only $\cl$-reduction of $N$ in $M$, then $\cl\core_M(N)=N$.
      \item If $\cl$ is the identity closure, then  $\cl\core_M(N)=N$ for any submodule $N \subseteq M$.
      \item If $\cl$ is the improper closure operation as in \ref{ex:modclosure}, then $\cl\core_M(N)=0$ for all submodules $N \subseteq M$.
  \end{enumerate}  
\end{example}

 Many authors have considered basic properties of the $\cl$-core and found formulas for computing it in particular situations:
     \begin{enumerate}
        \item Computing the core can be hard.  Hence, one direction is to exhibit that the core is a finite intersection of reductions. See for example \cite[Theorem 3.1, Theorem 3.2 and Theorem 4.5]{CPU-strcore}, \cite{FMedgecore} and \cite{FMPU-coremon}.
        \item For some ideals in specialized rings, formulas are known to compute the core. Some formulas listed below depend on the characteristic. See for example \cite{Fou-corchar}, \cite{FPU-corechar}, \cite{FMedgecore}.
        \begin{itemize}
        \item Given a minimal reduction $J$ of $I$, there often exists $n$ depending on various quantities associated to $I$ and $J$ (such as height, grade and reduction number) so that one of the two formulas holds: $\text{core}(I)=J^{n+1}:I^n$.  or $\text{core}(I)=J(J^n:I^n)$.   See for example \cite{HuSw-core}, \cite{HT-core}, \cite{CPU-coreres}, and \cite{PU-core}.
            \item It is sometimes the case that the core of an ideal $I$ is the adjoint of a power of $I$. See for example \cite{HuSw-core}, \cite{HySmi-Kawcore}, \cite{HySmi-cvgc}, and \cite{DCruzetal-coreadjoint}.
            \item Let $\m$ be the (homogeneous) maximal ideal of a (graded) local ring $R$. The core of $\m^n$ is often a power of $\m$ depending on $n$, the dimension of $R$ and the $a$-invariant of $R$.  See \cite{HuSw-core},  \cite{HySmi-Kawcore}, \cite{HySmi-cvgc}, \cite{FPU-corechar}. 
            \item  The core of a monomial ideal $I$ in a polynomial ring is a monomial ideal.  Effective methods to compute the core of monomials can be found in \cite{PUV0mon}, \cite{Sm-corestrongstable}, \cite{FMedgecore}, \cite{Ko-zdmoncore}, and \cite{FMPU-coremon}.
        \end{itemize}
        \item Several authors have considered whether ${\rm core}_R(I)S={\rm core}_S(IS)$ for extensions $S \supseteq R$.  See for example \cite{CPU-strcore}, \cite{PUV0mon}, \cite{KMOPrufcore1},  and \cite{KMOPrufercore}.  As mentioned in Section \ref{sec:bigtable} when discussing persistence, it is known that the equality does not always hold; an example was given in \cite{HT-core}.
        \item When $J \subseteq I$ are integrally closed ideals, in some cases it is known that ${\rm core}(I)$ is also integrally closed and ${\rm core}(J) \subseteq {\rm core}(I)$.  See \cite{HuSw-core}, \cite{HySmi-Kawcore}, \cite{PU-core} and \cite{Okuma-coreratsing}.  There are however examples of integrally closed ideals whose core is not integrally closed.  See \cite[Example 4.13]{PUV0mon}. 
        \item In the non-Noetherian setting, work has been done to classify ideals which have small integral core, i.e. ${\rm core}(I)=I^2I^{-1}$.  See for example \cite{KMcoredom}, \cite{KMcore1dim}, and \cite{KMOPrufcore1}.
        \item Most of the work on integral care has been done for ideals, but there has been some progress determining the core of modules in \cite{Mo-coremod}, \cite{CPU-corpd1}, and \cite{CFH-coremod}.
        \item For a Nakayama closure $\cl$, the $\cl\core$ was discussed in \cite{FoVa-core}, \cite{ERGV-chdual}, \cite{ERGV-nonres} and \cite{ERGV-extend}. Also a formula for the tight closure core of certain ideals in rings of characteristic $p>0$ was given in \cite{FVV*core} to be $J(I:J)$ where $J$ is a minimal $*$-reduction of $I$.  
    \end{enumerate}


The dual notion of $\cl$-core is the $\cl^\dual$-hull, where $\cl^\dual$ is the interior operation which is dual to $\cl$.

\begin{defn}[{\cite[Section 6]{ERGV-chdual}}]\label{def:absintexp}
Let $R$ be a commutative ring and $\cP$ be a class of pairs $(A,B)$ as in Remark \ref{ourpairs}
.  
Let $\intr$ be an interior operation on $\cP$.  
We say $C$ with $A\subseteq C \subseteq B$ is an \emph{$\intr$-expansion of $A$ in $B$} if $A^B_{\intr}=C^B_{\intr}$.

The \emph{$\intr$-hull of a submodule $A$ with respect to $B$} is the sum of all $\intr$-expansions of $A$ in $B$, or
 \[
\intr\hull^B(A):=\sum_{\intr(C) \subseteq A \subseteq C \subseteq B} C.
\]
\end{defn}

\begin{example} Here are some straightforward examples demonstrating $\intr$-hull of a submodule:  
  \begin{enumerate}
      \item If $N$ is the only $\intr$-expansion of $N$ in $M$, then $\intr\hull^M(N)=N$.
      \item If $\int$ is the identity interior, then  $\intr\hull^M(N)=N$ for any submodule $N \subseteq M$.
      \item If $\intr$ is the trivial interior operation (i.e. the one sending all submodules to 0), then $\intr\hull^M(N)=M$ for all submodules $N \subseteq M$.
      \item If $(R, \m)$ is a local ring and $\intr$ is the interior operation on the set of ideals of $R$ defined by $$\intr^R(I)=\begin{cases}R \text{ if } I=R \\
      0 \text{ if } I \neq R.  \\
      \end{cases}$$
      then $\intr\hull^R(I)= \m$ for any proper ideal $I$.
  \end{enumerate}  
\end{example}

The main references for examples of $\intr\hull$ are \cite{ERGV-chdual}, \cite{ERGV-nonres}, and \cite{ERGV-extend}.  In fact, as long as $(R,\m)$ is a complete local ring, and $I$ is an ideal of $R$, we can use the following theorem to determine the integral hull or $*$-hull of $(R/I)^\vee$ in the injective hull of the residue field.  We detail some known formulas for hull in Theorem \ref{thm:hullform} and $*$-hull in Theorem \ref{thm:*hullform} below, both of which are consequences of the following duality theorems.

\begin{thm}[{cf. \cite[Theorem 6.3]{ERGV-chdual} \cite[Theorem 6.2]{ERGV-nonres}}]\label{thm:expreddual}
 Let $(R, \m)$ be a Noetherian complete local ring.  Let $\intr$ be a relative interior operation on a class of pairs $\cP$ as in Remark \ref{ourpairs}, and let $\cl := \intr^\dual$ be its dual closure operation.  There exists an order reversing one-to-one correspondence between the poset of $\intr$-expansions of $A$ in $B$ and the poset of $\cl$-reductions of $(B/A)^{\vee}$ in $B^{\vee}$. Under this correspondence, an $\intr$-expansion $C$ of $A$ in $B$ maps to $(B/C)^\vee$, a $\cl$-reduction of $(B/A)^\vee$ in $B^\vee$.
\end{thm}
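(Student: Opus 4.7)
The plan is to exploit the classical order-reversing bijection $C \mapsto (B/C)^\vee$ between the submodules of $B$ containing $A$ and the submodules of $(B/A)^\vee$, where $(B/C)^\vee$ is identified with $\{g \in B^\vee : g(C) = 0\} \subseteq (B/A)^\vee \subseteq B^\vee$ as in Definition~\ref{def:nonresidualdual}. Since Matlis duality is a contravariant equivalence on Matlis-dualizable modules, this is an anti-isomorphism of posets, and the task reduces to showing that it restricts to a bijection between $\intr$-expansions and $\cl$-reductions.

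First I would rewrite each of the two conditions in a more tractable form. Using that $\intr$ is intensive, order-preserving on submodules, and idempotent, one sees that $C$ is an $\intr$-expansion of $A$ in $B$ if and only if $A \subseteq C \subseteq B$ and $C^B_\intr \subseteq A$: the forward direction is $C^B_\intr = A^B_\intr \subseteq A$ by intensivity, and for the converse, applying $\intr$ to the chain $C^B_\intr \subseteq A \subseteq C$ and invoking idempotence forces $C^B_\intr = (C^B_\intr)^B_\intr \subseteq A^B_\intr \subseteq C^B_\intr$. Dually, since $\cl = \intr^\dual$ is a closure operation by Table~\ref{tab:dual}, $(B/C)^\vee$ is a $\cl$-reduction of $(B/A)^\vee$ in $B^\vee$ if and only if $(B/A)^\vee \subseteq ((B/C)^\vee)^\cl_{B^\vee}$, the other inclusion being automatic from $A\subseteq C$.

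The key step is to compute $((B/C)^\vee)^\cl_{B^\vee}$ explicitly. By Proposition~\ref{pr:kernelview}, $x \in B^\vee$ lies in this closure exactly when $g(x) = 0$ for every $g \in \intr\bigl((B^\vee/(B/C)^\vee)^\vee,\,B^{\vee\vee}\bigr)$. Matlis-dualizing the sequence $0 \to C \to B \to B/C \to 0$ gives $B^\vee/(B/C)^\vee \cong C^\vee$, and then $(C^\vee)^\vee \cong C$ via $\eta_C$; the composite identification places this copy of $C$ inside $B^{\vee\vee}$ as $\eta_B(C)$. Since $\intr$ is isomorphism-invariant, the set in question coincides with $\eta_B(C^B_\intr)$. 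Hence $x$ lies in the closure precisely when $x(c) = 0$ for every $c \in C^B_\intr$, i.e.\ $((B/C)^\vee)^\cl_{B^\vee} = (B/C^B_\intr)^\vee$ under the annihilator identification.

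To finish, $(B/A)^\vee \subseteq (B/C^B_\intr)^\vee$ says that every functional on $B$ vanishing on $A$ also vanishes on $C^B_\intr$; since $B$ is Matlis dualizable, $A$ equals the intersection of the kernels of the functionals that vanish on it, so this inclusion is equivalent to $C^B_\intr \subseteq A$. The two characterizations from the second paragraph therefore coincide, and the Matlis-duality bijection restricts to the desired order-reversing correspondence $C\mapsto (B/C)^\vee$. The main technical obstacle is bookkeeping the nested Matlis-duality identifications $B\cong B^{\vee\vee}$, $C^{\vee\vee}\cong C$, and $B^\vee/(B/C)^\vee \cong C^\vee$ through the definition of the smile dual; routing the computation through Proposition~\ref{pr:kernelview} is what sidesteps explicit $\eta$-inversions and keeps the argument clean.
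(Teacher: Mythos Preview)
The survey paper does not actually prove Theorem~\ref{thm:expreddual}; it merely cites the result from \cite{ERGV-chdual} and \cite{ERGV-nonres}. So there is no in-paper proof to compare against, and the relevant question is simply whether your argument is correct.

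It is. Your reformulations of ``$\intr$-expansion'' as $C^B_\intr \subseteq A$ and ``$\cl$-reduction'' as $(B/A)^\vee \subseteq ((B/C)^\vee)^\cl_{B^\vee}$ are justified exactly as you say, using intensivity, idempotence, and order-preservation (and dually for $\cl$, which is a closure operation by Table~\ref{tab:dual}). The heart of the argument, the identity $((B/C)^\vee)^\cl_{B^\vee} = (B/C^B_\intr)^\vee$, is correctly obtained: under the paper's convention, the submodule $(B^\vee/(B/C)^\vee)^\vee \subseteq B^{\vee\vee}$ is precisely $\{g \in B^{\vee\vee} : g \text{ vanishes on } (B/C)^\vee\}$, and unwinding $\eta_B$ shows this equals $\eta_B(C)$; isomorphism-invariance of $\intr$ then gives $\intr$ of that pair as $\eta_B(C^B_\intr)$, and Proposition~\ref{pr:kernelview} delivers the claimed description. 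Your final step, that $(B/A)^\vee \subseteq (B/C^B_\intr)^\vee$ is equivalent to $C^B_\intr \subseteq A$, is the standard double-annihilator fact for Matlis-dualizable modules.

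Your route through Proposition~\ref{pr:kernelview} is well adapted to this survey's presentation and avoids manipulating the $\eta$-maps in Definition~\ref{def:nonresidualdual} directly; this is exactly the sort of argument the paper's toolkit is designed to support.
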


\begin{thm}[{cf. \cite[Theorem 6.17]{ERGV-chdual} \cite[Theorem 6.6]{ERGV-nonres}}]
\label{thm:hullexists}
Let $(R,\m)$ be a Noetherian complete  local ring.  Let $A \subseteq B$ be Artinian $R$-modules, and let $\intr$ be a  Nakayama interior defined on Artinian $R$-modules. Then the $\intr$-hull of $A$ in $B$ is dual to the $\cl$-core of $(B/A)^\vee$ in $B^\vee$, where $\cl$ is the closure operation dual to $\intr$.
\end{thm}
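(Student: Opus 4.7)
The plan is to reduce the claim to Theorem~\ref{thm:expreddual} together with the standard sum-to-intersection identity of Matlis duality. Interpreting ``dual'' in the usual sense (a submodule $N \subseteq B$ corresponds to the submodule $(B/N)^\vee \subseteq B^\vee$), the statement to prove is the identification
\[
(B/\intr\hull^B(A))^\vee \;=\; \cl\core_{B^\vee}((B/A)^\vee)
\]
inside $B^\vee$.

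First I would apply Theorem~\ref{thm:expreddual}: the assignment $C \mapsto (B/C)^\vee$ is an order-reversing bijection from $\intr$-expansions $C$ of $A$ in $B$ onto $\cl$-reductions of $(B/A)^\vee$ in $B^\vee$, where $\cl = \intr^\dual$. Since every $\cl$-reduction arises this way, we may rewrite
\[
\cl\core_{B^\vee}((B/A)^\vee) \;=\; \bigcap_{C} (B/C)^\vee,
\]
with $C$ ranging over all $\intr$-expansions of $A$ in $B$. Next, I use the identification $(B/C)^\vee = \{g \in B^\vee \mid g(C)=0\}$. An element $g \in B^\vee$ lies in every $(B/C)^\vee$ if and only if $g$ vanishes on every $\intr$-expansion $C$, which is equivalent to $g$ vanishing on $\sum_C C = \intr\hull^B(A)$. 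Hence $\bigcap_C (B/C)^\vee = (B/\intr\hull^B(A))^\vee$, giving the desired equality.

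The main obstacle is verifying that the Nakayama hypothesis does the work it is supposed to. The bijection of Theorem~\ref{thm:expreddual} and the annihilator identity above are formal, so the Nakayama condition on $\intr$ (equivalently on $\cl$, by Table~\ref{tab:dual}) must be doing something other than driving the raw duality. Its role is to ensure that $\intr\hull^B(A)$ is itself an $\intr$-expansion of $A$ in $B$ and, symmetrically, that $\cl\core_{B^\vee}((B/A)^\vee)$ is itself a $\cl$-reduction; without this, the identification loses its intended content. I would check this by exploiting that $B$ is Artinian (so its submodules satisfy DCC, while those of $B^\vee$ satisfy ACC) to reduce the potentially infinite sum and intersection to finite ones, and then apply the Nakayama property to show that the finite sum of expansions is again an expansion (and dually for reductions), mirroring the finite-reduction arguments of \cite{ERGV-chdual}. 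Once this has been established on either side, Matlis duality transports it to the other, and the bijection together with the annihilator identity completes the proof.
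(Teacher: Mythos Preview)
Your main argument is correct and matches the approach of the cited sources: Theorem~\ref{thm:expreddual} gives a bijection between $\intr$-expansions $C$ of $A$ in $B$ and $\cl$-reductions of $(B/A)^\vee$ in $B^\vee$ via $C \mapsto (B/C)^\vee$, and the elementary Matlis-duality identity $\bigcap_C (B/C)^\vee = (B/\sum_C C)^\vee$ then yields $(B/\intr\hull^B(A))^\vee = \cl\core_{B^\vee}((B/A)^\vee)$, which is exactly the asserted duality. The paper itself does not reprove this result; it is imported from \cite{ERGV-chdual,ERGV-nonres}, where the argument is precisely this annihilator computation layered on the expansion--reduction bijection.

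Your reading of the Nakayama hypothesis is also right: it plays no role in the bare duality you have written down, and in the cited sources its function is to ensure that $\intr\hull^B(A)$ is itself an $\intr$-expansion (equivalently, that the $\cl$-core is itself a $\cl$-reduction). However, your sketched mechanism for proving this is off. You propose to use DCC on submodules of $B$ and ACC on submodules of $B^\vee$ to reduce the sum $\sum_C C$ and the intersection $\bigcap_L L$ to finite ones, but the chain conditions run the wrong way: DCC controls descending chains and hence stabilizes intersections, not sums, while ACC stabilizes sums, not intersections. Neither condition, as you have assigned them, truncates the operation you need. In \cite{ERGV-chdual,ERGV-nonres} the Nakayama property is used more directly, via the existence of minimal $\cl$-reductions and the defining implication in Definition~\ref{def:nakayama}, rather than through a generic finiteness reduction. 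Since this portion is not required for the duality statement as written, your proof of the theorem itself stands; only the parenthetical justification of why Nakayama is present would need repair.
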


Under various hypotheses on the ring and the ideal, various formulas have been given for the (integral) core of an ideal.  Without going into the hypotheses, the following gives a formula for the core of an ideal $I$ in terms of any minimal reduction $J$ of $I$ where $r$ is the reduction number of $I$ with respect to $J$. 
\[
{\rm core}(I)=I(J^r:_RI^r)=J(J^r:_RI^r)=(J^{r+1}:_RI^r).
\]
 The hypotheses can be found in \cite[Proposition 5.3]{CPU-coreres}, \cite[Theorem 3.7]{HT-core}, \cite[Theorem 4.5]{PU-core}, \cite[Theorem 3.3]{FPU-corechar} where the power can be relaxed to any $n \geq r$ in some cases.


 \begin{thm} \cite[Theorem 7.9]{ERGV-nonres}
 \label{thm:hullform}
If $(R,\m)$ is a complete local ring 
and $I$ is an ideal satisfying ${\rm core}(I)=(J^{n+1}:I^n)$ for some
reduction $J$ and natural number $n$,
then
\[
{\rm hull}^E(0:_E I)=I^n(0:_E J^{n+1}).
\]
\end{thm}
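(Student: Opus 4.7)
The plan is to derive the formula by combining the core--hull duality of Theorem~\ref{thm:hullexists} with the classical Matlis-duality identity
\[
\ann_E(K:_R L)=L\cdot\ann_E(K)
\]
for ideals $K,L$ in a complete local Noetherian ring. Throughout, $\cl$ denotes integral closure (a Nakayama closure on finitely generated modules) and $\intr:=\cl^{\dual}$ its smile dual, which is a Nakayama interior on Artinian $R$-modules by the discussion following Definition~\ref{def:nakayama}.

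First I would apply Theorem~\ref{thm:hullexists} with $B=E$ and $A=0:_E I$. Under the order-reversing bijection of Theorem~\ref{thm:expreddual}, passing to extremal objects yields
\[
\bigl(E/\mathrm{hull}^E(0:_E I)\bigr)^{\vee}=\mathrm{core}\bigl((E/(0:_E I))^{\vee}\bigr)
\]
inside $E^{\vee}=R$. Since $0:_E I\cong (R/I)^{\vee}$ and Matlis duality preserves annihilators between finitely generated and Artinian modules, $(E/(0:_E I))^{\vee}$ is identified with $\ann_R(0:_E I)=I$; the left-hand side is $\ann_R(\mathrm{hull}^E(0:_E I))$. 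Applying $\ann_E(-)$, which inverts $\ann_R(-)$ on Artinian submodules of $E$, converts the equality into
\[
\mathrm{hull}^E(0:_E I)=0:_E\mathrm{core}(I).
\]
Substituting the hypothesis $\mathrm{core}(I)=(J^{n+1}:_R I^n)$ and then invoking the annihilator identity above with $K=J^{n+1}$ and $L=I^n$ gives exactly the claimed $I^n(0:_E J^{n+1})$.

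The main obstacle is justifying the identity $\ann_E(K:_R L)=L\cdot\ann_E(K)$. The inclusion $L\cdot\ann_E(K)\subseteq\ann_E(K:_R L)$ is immediate from $(K:_R L)\cdot L\subseteq K$. For the reverse inclusion, I would fix generators $\ell_1,\dots,\ell_s$ of $L$ and use that the map $R/(K:_R L)\hookrightarrow(R/K)^{\oplus s}$ sending $\bar r$ to $(\overline{r\ell_1},\dots,\overline{r\ell_s})$ is injective by the definition of the colon. Applying the exact functor $(-)^{\vee}=\Hom_R(-,E)$ and using $\Hom_R(R/J,E)=\ann_E(J)$ yields a surjection $\ann_E(K)^{\oplus s}\twoheadrightarrow\ann_E(K:_R L)$ whose image is precisely $\sum_i\ell_i\ann_E(K)=L\cdot\ann_E(K)$. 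With this identity in hand, the three steps above assemble into a short proof, and in fact the same argument shows more generally that $\mathrm{hull}^E(0:_E I)$ is always $\ann_E$ of $\mathrm{core}(I)$, so any known formula for $\mathrm{core}(I)$ automatically transfers to $\mathrm{hull}^E(0:_E I)$.
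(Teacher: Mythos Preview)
Your argument is correct. The three ingredients---core--hull duality from Theorem~\ref{thm:hullexists} specialized to $B=E$ and $A=0:_E I$ (yielding $\mathrm{hull}^E(0:_E I)=\ann_E(\mathrm{core}(I))$), the hypothesis on $\mathrm{core}(I)$, and the Matlis-duality colon identity $\ann_E(K:_R L)=L\cdot\ann_E(K)$---combine exactly as you describe, and your justification of the colon identity via dualizing the injection $R/(K:_R L)\hookrightarrow (R/K)^{\oplus s}$ is clean and complete.

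As for comparison: this survey does not actually supply a proof of Theorem~\ref{thm:hullform}; it merely cites \cite[Theorem~7.9]{ERGV-nonres}. The proof there follows precisely the strategy you outline---apply the duality of Theorems~\ref{thm:expreddual}/\ref{thm:hullexists} to convert the core formula into a hull formula, then unwind the annihilator of a colon ideal in $E$. Your proposal is thus essentially the same approach as the cited source, and your final remark (that $\mathrm{hull}^E(0:_E I)=\ann_E(\mathrm{core}(I))$ holds in general, so any core formula transfers automatically) is exactly the right way to frame the result.
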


 These formulas for (integral) core and hull are related to the following closure and interior operations:
\begin{defn} \label{def:bee} \cite[Definition 4.1 and 4.9]{ERGV-nonres}
    Let $R$ be a commutative ring and let $J$ be an ideal of $R$.  Then for any submodule inclusion $L \subseteq M$:
    \begin{enumerate}
        \item The \textit{$J$-basically full closure} of $L$ in $M$ is given by $
\Jcol JLM := (JL :_M J).$
\item The \emph{$J$-\bemp \  interior} of $L$ in $M$ is $\Jintrel JML := J (L :_M J).$
    \end{enumerate}
\end{defn}
These were originally inspired by the work of Heinzer, Ratliff, and Rush on basically full closure \cite{HRR-bf} as well as \cite{VaVr-tctest} and \cite{Rush-mfull} and the following works \cite[Definition 3.7]{CIST-AuslanderReiten} and \cite[Definition 3.1]{Dao-colon}. When $J=\m$, 
some authors refer to an $\m$-basically full closed ideal as \emph{weakly $\m$-full}. 
 However, the $J$-basically full closures also come up in \cite[Proposition 1.58]{Vasc-icbook} as a test for membership in the integral closure of an ideal $I$ of an integral domain. (See also \cite[Theorem 2.4]{CHVIntegralclosure}, which says that if $I$ is generically a complete intersection and $J$ is the Jacobian ideal of $I$ in a polynomial ring, then $I$ is integrally closed $\iff I_R^{J{\rm bf}}=I$.) 
 Further, the $J$-basically full condition comes up in \cite[Section 2]{CorsoPoliniS2} for $J=I^t$ in the $s$-generated ideal $\mathfrak{a}$ which defines an $s$-residual intersection $K=\mathfrak{a}:I$.  In particular, $\mathfrak{a}^{I^t{\rm bf}}_R=\mathfrak{a}$ for $1 \leq t \leq s-g$ where $g$ is the grade of $I$.
 
 The $J$-basically empty interiors also appear in \cite[Lemma 3.7]{DMS-reflexiveUlrich} where the authors define the  trace ideal of a regular ideal in terms of colons ($\text{tr}_R(I)=((x)^R_{I{\rm be}}:x)$ for a regular ideal $I$ an nonzerodivisor $x \in I$)  and in \cite[Lemma 3.3]{DeK-Burch}, \cite[Definition 2.1, Proposition 2.3]{DKT-Burch} 
 in defining a submodule of a module to be Burch ($N \subseteq M$ is Burch if $N^M_{\m{\rm be}} \neq  \m N$). 



Fouli, Vassilev and Vraciu  \cite{FVV*core} devised a formula for the $*\core$ (where $* =$ tight closure) of some ideals in normal local rings of characteristic $p>0$.  They discovered three sufficient
conditions \cite[Theorems 3.7, 3.10 and 3.12]{FVV*core} such that the $*\core (I) =I(J :_R I)$, giving us the
following Proposition:

\begin{prop}\label{cor:*corebe} \cite[Proposition 7.13]{ERGV-nonres}
Let $(R, \m)$ be a normal local ring of characteristic $p>0$ with perfect residue field.  Let $\tau = \tau_*^{fg}(R)$ be the finitistic tight closure test ideal. Suppose one of the following holds:
\begin{enumerate}
    \item $R$ is Cohen-Macaulay and excellent with 
    $\dim R \geq 2$.  Let $x_1, x_2,\ldots,x_d$ be part of a system of parameters and $J=(x_1^t,x_2^t,x_3, \ldots,x_d)$ where $x_1, x_2 \in \tau$ and $t \geq 3$.  Let $J \subseteq I \subseteq J^*$.
    \item The test ideal $\tau=\m$ and $J$ is any minimal $*$-reduction of $I$.
    \item The test ideal $\tau$ is $\m$-primary and $J'$ is a minimal $*$-reduction of $I'$ and $J=(J')^{[q]}$ and $I=(I')^{[q]}$ for large $q=p^e$.
\end{enumerate}
Then \[*\core (I)=I(J:_R I)=\Jintrel{I}{R}{J}.\]
\end{prop}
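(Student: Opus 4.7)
The plan is to reduce the claim to two independent observations: the second equality is definitional unpacking, while the first equality is a direct appeal to the three theorems of Fouli–Vassilev–Vraciu cited in the paragraph preceding the statement.

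First I would dispense with the second equality. By Definition~\ref{def:bee}, for ideals $L \subseteq M$ of $R$ and an ideal $K$, the $K$-basically empty interior of $L$ in $M$ is $\Jintrel{K}{M}{L} = K(L:_M K)$. Taking $K = I$, $M = R$, $L = J$, this immediately yields $\Jintrel{I}{R}{J} = I(J :_R I)$, with no further work required.

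For the first equality, I would organize the argument by case, matching hypotheses (1), (2), and (3) to \cite[Theorem 3.7]{FVV*core}, \cite[Theorem 3.10]{FVV*core}, and \cite[Theorem 3.12]{FVV*core} respectively. In each case, the hypotheses of the theorem are designed precisely so that the ambient ring is normal local of characteristic $p > 0$ with perfect residue field (the standing assumptions of the proposition), and so that the conclusion reads $*\core(I) = I(J :_R I)$. So in case (1) the Cohen--Macaulay excellence assumption with the parameter-type description of $J$ feeds into \cite[Theorem 3.7]{FVV*core}; in case (2) the hypothesis that $\tau_*^{fg}(R) = \m$ together with $J$ a minimal $*$-reduction of $I$ feeds into \cite[Theorem 3.10]{FVV*core}; and in case (3) the Frobenius-power setup feeds into \cite[Theorem 3.12]{FVV*core}. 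In each case one checks that the hypotheses of the cited theorem are subsumed by our (1), (2), or (3), and then records the desired conclusion.

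The main obstacle, such as it is, is purely bookkeeping: making sure that the notational conventions (minimal $*$-reduction, test ideal versus finitistic test ideal, the exponent $q = p^e$ regime in case (3)) in \cite{FVV*core} match those used here. No new closure-theoretic input is needed beyond the definition of $\Jintrel{I}{R}{J}$; once the definitional identity and the three case citations are in place, the chain of equalities $*\core(I) = I(J:_R I) = \Jintrel{I}{R}{J}$ follows.
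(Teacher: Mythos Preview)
Your proposal is correct and matches the paper's approach exactly. The paper does not include a formal proof environment for this proposition; instead, the paragraph immediately preceding it explains that the first equality is the content of \cite[Theorems 3.7, 3.10 and 3.12]{FVV*core} under the three respective hypotheses, and the second equality is then immediate from Definition~\ref{def:bee}, which is precisely what you outline.
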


 \begin{thm}\label{thm:*hullform} \cite[Theorem 7.14]{ERGV-nonres}
Let $(R, \m)$ be a complete normal local ring of characteristic $p>0$ 
and $I$ is an ideal satisfying $*\core_R(I)=I(J:_R I)$ for some $*$-reduction $J$.
Then \[*\hull^E(0:_E I)=\Jcol I {(0:_E J)}{E}.\]  
\end{thm}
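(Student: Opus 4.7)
The plan is to apply the general hull-core duality (Theorem~\ref{thm:hullexists}) to convert the hypothesis into a formula for the hull, and then to rewrite that formula in the $J$-basically full closure notation via two short Matlis-duality computations.

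Since tight closure $*$ is a Nakayama closure on finitely generated modules (as cited in Section~\ref{sec:pairs}), Table~\ref{tab:dual} says its smile dual $*^\dual$ (tight interior) is a Nakayama interior on Artinian modules, so Theorem~\ref{thm:hullexists} applies to the Artinian pair $0:_E I \subseteq E$. Unpacking the correspondence of Theorem~\ref{thm:expreddual}, each $*^\dual$-expansion $C$ of $0:_E I$ in $E$ corresponds to the $*$-reduction $(E/C)^\vee$ of $(E/(0:_E I))^\vee$ in $E^\vee$. Under the canonical identification $E^\vee \cong R$, these become $\ann_R C$ and $\ann_R(0:_E I) = I$, respectively. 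Since the bijection is order-reversing, the sum of expansions and the intersection of reductions are linked by $\ann_R$:
$$\ann_R\bigl(*\hull^E(0:_E I)\bigr) \;=\; \bigcap_C \ann_R C \;=\; *\core_R(I) \;=\; I(J :_R I),$$
where the last equality is the hypothesis. Applying $\ann_E$ and invoking Matlis reflexivity gives $*\hull^E(0:_E I) = 0 :_E I(J:_R I)$.

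To finish I would show $0 :_E I(J:_R I) = (I(0:_E J) :_E I)$, which equals $\Jcol I {(0:_E J)}{E}$ by Definition~\ref{def:bee}. First, from $\ann_R(0:_E J) = J$ one sees $\ann_R\bigl(I \cdot (0:_E J)\bigr) = (J :_R I)$, since $r \cdot I \cdot (0:_E J) = 0 \iff rI \subseteq J$. By Matlis reflexivity this rewrites as $I(0:_E J) = \ann_E(J :_R I)$. Second, for any submodule $N \subseteq E$ and ideal $L \subseteq R$, the identity $(N :_E L) = \ann_E(L \cdot \ann_R N)$ holds, since $Ly \subseteq N \iff L \ann_R N \cdot y = 0$. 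Applied with $N = I(0:_E J)$ and $L = I$, this yields
$$(I(0:_E J) :_E I) \;=\; \ann_E\bigl(I(J:_R I)\bigr) \;=\; 0 :_E I(J :_R I),$$
completing the chain of equalities.

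The main technical point is making the hull-core duality of Theorem~\ref{thm:hullexists} concretely computable as Matlis annihilation; once one has $\ann_R(*\hull^E(0:_E I)) = *\core_R(I)$, the identification with the $J$-basically full closure is a short manipulation that mirrors the shape of the proof of Theorem~\ref{thm:hullform}.
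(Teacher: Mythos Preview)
Your proof is correct. The survey paper itself does not supply a proof of this theorem, merely citing \cite[Theorem 7.14]{ERGV-nonres}; your argument is exactly the natural one suggested by the surrounding machinery (Theorems~\ref{thm:expreddual} and~\ref{thm:hullexists}) and parallels the shape of Theorem~\ref{thm:hullform}, so it is almost certainly the intended proof. The two Matlis-duality identities you isolate, $\ann_R\bigl(I(0:_E J)\bigr) = (J:_R I)$ and $(N:_E L) = \ann_E(L\cdot \ann_R N)$, are precisely what is needed to pass from $0:_E I(J:_R I)$ to $(I(0:_E J):_E I)$, and both are immediate from the perfect pairing between ideals of $R$ and submodules of $E$ over a complete local ring.
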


\section*{Acknowledgments}

We thank Mel Hochster and Craig Huneke for their guidance as thesis advisors and their inspiring work on tight closure which eventually led us to our work on pair operations. We thank Alessandra Costantini and Karl Schwede as members of the community who asked interesting questions during the course of our papers. We are also grateful for discussions throughout the years with Jesse Elliott, who had a unique perspective on closure operations and whose recent passing is a distinct loss to the community. We thank the anonymous referee who made many suggestions that improved the paper, including the suggestion to include something like Table~\ref{ta:exprops}. Finally, we thank Abdullah Alshayie and Sarah Poiani who, as our students, are pushing pair operations in new directions, and for their careful reading of our papers.

\appendix \section{Some pair operations}

The purpose of this appendix is to provide definitions for any pair operations in Table~\ref{ta:exprops} that are not defined elsewhere in the paper.

\begin{plaindef}
We start with three pair operations that are defined everywhere.  \begin{itemize}
    \item The operation $p(L,M) := M$ for all $L \subseteq M$ is the \emph{indiscrete}, or \emph{improper closure} 
    operation.  It is the largest pair operation and the largest closure operation.
    \item The operation $p(L,M) := L$ for all $L \subseteq M$ is the \emph{identity} pair operation, or \emph{trivial closure} operation. 
    It is simultaneously the smallest closure operation and the largest interior operation.
    \item The operation $p(L,M) := 0$ for all $L \subseteq M$ is the \emph{zero} operation. It is the smallest pair operation and the smallest interior operation.
\end{itemize} 
\end{plaindef}

\begin{plaindef}
Let $R$ be a ring and $I$ an ideal.  The \emph{radical} $\sqrt{I}$ of $I$ is defined to be those $x\in R$ such that $x^n \in I$ for some $n\in \N$.  It is also the intersection of all prime ideals containing $I$.  Here we view the radical as a pair operation on $\cP = \{(I,R) \mid I \text{ ideal of } R\}$ (where $R$ is fixed across $\cP$), with $p(I,R) := \sqrt{I}$.
\end{plaindef}

\begin{plaindef}
Let $R$ be a Noetherian ring of positive prime characteristic $p$.  Set $R^\circ$ to be the complement of the union of the minimal primes of $R$. Let $F: R \ra R$ be the \emph{Frobenius endomorphism} $r \mapsto r^p$. For any $e\geq 0$, let $F^e_*(R)$ denote the $R$-$R$ bimodule with underlying Abelian group $R$ (with elements written $F^e_*(r)$ for each $r\in R$) with actions given by $aF^e_*(r)b := F^e_*(arb^{p^e})$ for $a,b \in R$ and $F^e_*(r) \in F^e_*(R)$.  This gives rise to the \emph{Peskine-Szpiro functors} $F^{e*}$ on the category of (left) $R$-modules, given by $F^{e*}(-) := F^e_*(R) \otimes_R -$.  For each $z\in M$, we let $z^{p^e}_M := F^e_*(1) \otimes z \in F^e_*(R) \otimes_R M=F^{e*}(M)$.  Then for any $r\in R$ and $z\in M$, we have $(rz)^{p^e}_M = r^{p^e} z^{p^e}_M$; hence the notation.  In particular, when $z\in M=R$, $z^{p^e}_M = z^{p^e}$ in the ordinary sense.  For any $R$-submodule $L \subseteq M$, let $L^{[p^e]}_M := $ the $R$-submodule of $F^{e*}(M)$ generated by $\{z^{p^e}_M \mid z\in L\}$ -- in other words, the image of $F^{e*}(i)$, where $i: L \hookrightarrow M$ is the inclusion map.  For an ideal $I$, we have $I^{[p^e]}_R = $ the ideal generated by $\{a^{p^e} \mid a\in I\}$, and $F^{e*}(R/I) \cong R/I^{[p^e]}$ naturally as $R$-modules.  When $M$ is finitely generated, then given a finite free presentation $R^t \arrow{\phi} R^s \ra M \ra 0$ for $M$, where $\phi$ is given by a matrix $[a_{ij}]_{\substack{1\leq i \leq s \\ 1 \leq j \leq t}}$ with each $a_{ij} \in R$, it follows that $F^{e*}(M) \cong \coker {\phi^{[p^e]}}$, where $\phi^{[p^e]}: R^t \ra R^s$ is given by the matrix $[a_{ij}^{p^e}]_{\substack{1\leq i \leq s \\ 1 \leq j \leq t}}$. 
%
\begin{itemize}
    \item For a pair $L \subseteq M$ of modules, the \emph{Frobenius closure $L^{\rm F}_M$ of $L$ in $M$} is the set of all $z\in M$ such that for some $e\geq 0$, $z^{p^e}_M \in L^{[p^e]}_M$ (when $M=R$, see \cite[Section 7]{HHsplit}).
    \item For a pair $L \subseteq M$ of modules, the \emph{tight closure $L^*_M$ of $L$ in $M$} \cite[Section 8]{HHmain} is the set of all $z\in M$ such that there exist $c\in R^\circ$ and $e_0 \geq 0$ such that for all $e\geq e_0$, we have $cz^{p^e}_M \in L^{[p^e]}_M$.
    \item A \emph{big weak test element} (see \cite[Section 6]{HHmain} and \cite[p. 63]{HoFNDTC}) is an element $c\in R^\circ$ such that there is some $e_0 \geq 0$ with the property that for any $R$-module inclusion $L \subseteq M$ and any $z\in L^*_M$, we have $cz^{p^e}_M \in L^{[p^e]}_M$ for all $e\geq e_0$.  We remove the word \emph{weak} when $e_0=0$.
    \item We say that $R$ is \emph{F-finite} \cite[Definition before Lemma 1.4]{Fe-Fpure} if $F^1_*(R)$ is finitely generated as a right $R$-module. 
    \item If $\ia$ is an ideal such that $\ia \cap R^\circ \neq \emptyset$, the \emph{$\ia$-tight closure $L^{*\ia}_M$ of $L$ in $M$} \cite{HaTa-gentest, HaYo-atc} is the set of all elements $z\in M$ such that there exist $c\in R^\circ$ and $e_0 \geq 0$ such that for all $e\geq e_0$, $c \ia^{p^e} z^{p^e}_M \subseteq L^{[p^e]}_M$.  Note that we are taking \emph{ordinary} (not bracket) powers of $\ia$.
    \item A \emph{big $\ia$-test element} is an element $c\in R^\circ$ such that for any $R$-modules $L \subseteq M$ and any $z \in L^{*\ia}_M$, we have $c\ia^{p^e} z^{p^e}_M \subseteq L^{[p^e]}_M$ for all $e\geq 0$.  If this is only assumed to be true when $M$ is finitely generated, we call $c$ an \emph{$\ia$-test element} \cite[Definition 1.5]{HaTa-gentest}.
    \item If $R$ is F-finite, we say that an element $z\in M$ is in the \emph{tight interior} of $M$ (written $z\in M_*$) \cite{nmeSc-tint} if for any $c\in R^\circ$ and any nonnegative integer $e_0 \geq 0$, there are integers $t\geq 1$ and $e_1, \ldots, e_t \in [e_0,+\infty) \cap \Z$ and \emph{right} $R$-linear maps (i.e. linear with respect to their right $R$-module structures) $g_i: F^{e_i}_*(R) \ra M$ such that $z = \sum_{i=1}^t g_i(F^{e_i}_*(c))$.
\end{itemize}
\end{plaindef}

\begin{plainrem}
Big weak test elements exist whenever $R$ is essentially of finite type over an excellent local ring \cite{HHbase}, or is F-finite \cite{HH-sFreg}, or is F-pure and excellent \cite{Sha-Fpure}, or is a reduced affinoid algebra \cite{DEST1}.  Thus, one does not lose much (and does gain a lot) from assuming that $R$ has such an element. Some similar statements hold for $\ia$-test elements.  Thus, for the purposes of discussing ($\ia$-)tight closure, we are happy to assume the existence of such elements.
\end{plainrem}

\begin{plaindef}[{\cite{HH-bCM,Sm-param}}]
Let $R$ be a Noetherian ring.  If $R$ is an integral domain, the \emph{absolute integral closure $R^+$ of $R$} is the integral closure of $R$ in an algebraic closure of the fraction field of $R$. Then if $L \subseteq M$ are $R$-modules, the \emph{plus closure $L^+_M$ of $L$ in $M$} is $L^{\cl_{R^+}}_M$.

If $R$ is not necessarily an integral domain, then we say $z \in L^+_M$ if for each minimal prime $\p$ of $R$, the image of $z$ in $M/\p M$ is in the plus closure of the submodule $(L+\p M) / \p M$ of $M/\p M$.
\end{plaindef}

\begin{plaindef}
If $(R,\m)$ is a local ring and $L \subseteq M$ are $R$-modules, the \emph{socle {\rm soc}$(L,M)$ of $L$ in $M$} is the submodule $(L :_M \m)$ of $M$.
\end{plaindef}

\begin{plaindef}
Let $R$ be a Noetherian ring, $\ia$ an ideal, and $L \subseteq M$ be $R$-modules.  Then the \emph{$\ia$-saturation of $L$ in $M$}, written $(L :_M \ia^\infty)$, is $\{x \in M \mid \ia^t x \subseteq L \text{ for some } t\in \N\}$. This operation is typically unnamed in the literature but is a natural generalization of the definition of the $\ia$-saturation of an ideal (see e.g. \cite[17.4.5]{BrSh-locobook2}). It is at the base of the theory of local cohomology.
\end{plaindef}

\begin{plaindef}[
{\cite[Definition 1.1]{nmeUlr-lint}; derived from \cite{Rees-redmod}, \cite{EHU-Ralg} and others}]
Let $L \subseteq F$
be $R$-modules such that $F$ is free. Let $S=\Sym(F)$ be the symmetric algebra over $R$ defined by $F$, with its natural grading over $R$. Let $T$ be the $R$-subalgebra of $S$ generated by $L$, considered as a submodule of the degree 1 component $F$ of $S$. The \emph{integral closure $L_F^{-}$ of $L$ in $F$} is the degree 1 part of the (ring-theoretic) integral closure of $T$ in $S$. 
\end{plaindef}

\begin{plaindef}[{\cite[Definition 1.3]{nmeUlr-lint}, derived from \cite{EHU-Ralg}}]
Let $R$ be a Noetherian ring and $L \subseteq M$ be finite $R$-modules.  For $x \in M$, we say $x$ is in the $EHU$-integral closure of $L$ in $M$,  written $x \in L_M^{\EHUi}$ if for every $R$-module homomorphism $g:M \rightarrow F$ where $F$ is free we have $g(x) \in g(L)_F^{-}$.
\end{plaindef}

\begin{plaindef}[\cite{nmeUlr-lint}] \label{def:li}
Let $L \subseteq M$ be $R$-modules. Let $\pi:F \twoheadrightarrow M$ be a surjection of a free $R$-module $F$ onto $M$. Let $K:=\pi^{-1}(L)$. Then the liftable integral closure of $L$ in $M$ is $\lic LM:=\pi(K_F^{-})$.
\end{plaindef}

\begin{plaindef}[derived from \cite{Rees-redmod}]
Let $R$ be a Noetherian ring and $L \subseteq M$ be $R$-modules.  We say $x\in M$ is in the \emph{Rees integral closure $L^\Ri_M$ of $L$ in $M$} if for every minimal prime $\p$ of $R$ and every discrete rank 1 valuation ring $V$ between $R/\p$ and its fraction field $\kappa(\p)$, the image of $x$ in the map $M \ra M \otimes_R \kappa(\p)$ (induced by the natural ring map $R \ra \kappa(\p)$) is in the image of the composite map $L \otimes_R V \ra M \otimes_R V \ra M \otimes_R \kappa(\p)$ (induced by the inclusions $L \hookrightarrow M$ and $V \hookrightarrow \kappa(\p)$).
\end{plaindef}

\begin{plaindef}
Let $(R,\m)$ be a Noetherian local ring and let $L \subseteq M$ be a nested pair of $R$-modules such that $M/L$ has finite length.  The \emph{basically full closure of $L$ in $M$} is defined to be $(\m L :_M \m)$ \cite[Definition 4.4]{HRR-bf}.  That is, $\Jcol \m L M$.
\end{plaindef}

\begin{plaindef}[{see \cite{RatRu-rr} for $I=R$; \cite[Example 2.6]{ERGV-nonres} in general}]
Let $J \subseteq I$ be ideals.  The \emph{Ratliff-Rush operation of $J$ in $I$} is $\bigcup_{n \in \N_0} (J^{n+1} :_I J^n)$.  
\end{plaindef}

\providecommand{\bysame}{\leavevmode\hbox to3em{\hrulefill}\thinspace}
\providecommand{\MR}{\relax\ifhmode\unskip\space\fi MR }
\providecommand{\MRhref}[2]{%
  \href{http://www.ams.org/mathscinet-getitem?mr=#1}{#2}
}
\providecommand{\href}[2]{#2}

\end{document}